\newtheoremstyle{myremark} % name
{7pt}                    % Space above
{7pt}                    % Space below
{}  	                 % Body font
{}                           % Indent amount
{\bf}       	         % Theorem head font
{.}                          % Punctuation after theorem head
{.5em}                       % Space after theorem head
{}  % Theorem head spec (can be left empty, meaning ‘normal’)
\theoremstyle{plain}
\newtheorem{lemma}{Lemma}
\newtheorem{theorem}[lemma]{Theorem}
\newtheorem*{theorem-main}{Main Theorem}
\newtheorem*{theorem-miss-origin}{Theorem~\ref{thm:miss-origin}}
\newtheorem*{theorem-bu}{Theorem~\ref{thm:bu}}
\newtheorem*{theorem-bu-Sn}{Theorem~\ref{thm:bu-Sn}}
\newtheorem*{theorem-bu-Sn-2}{Theorem~\ref{thm:bu-Sn-2}}
\newtheorem{corollary}[lemma]{Corollary}
\newtheorem{proposition}[lemma]{Proposition}
\newtheorem{conjecture}[lemma]{Conjecture}
\theoremstyle{definition}
\newtheorem{definition}[lemma]{Definition}
\theoremstyle{myremark}
\newtheorem{remark}[lemma]{Remark}
\newcommand{\N}{\mathbb{N}}
\newcommand{\R}{\mathbb{R}}
\newcommand{\Z}{\mathbb{Z}}
\newcommand{\cB}{\mathcal{B}}
\newcommand{\sP}{\mathscr{P}}
\newcommand{\diam}{\mathrm{diam}}
\newcommand{\conv}{\mathrm{conv}}
\newcommand{\cone}{\mathrm{cone}}
\newcommand{\sm}{\mathrm{SM}}
\newcommand{\vr}[2]{\mathrm{VR}(#1;#2)}
\newcommand{\vrm}[2]{\mathrm{VR}^m(#1;#2)}
\newcommand{\sign}{\mathrm{sign}}
\newcommand{\supp}{\mathrm{supp}}
\newcommand{\ut}{\underline{t}}
\begin{document}

\title[Metric thickenings, Borsuk--Ulam theorems, and orbitopes]{Metric thickenings, Borsuk--Ulam theorems, and orbitopes}
\author{Henry Adams}
\address[HA]{Department of Mathematics, Colorado State University, Fort Collins, CO 80523, United States}
\email{adams@math.colostate.edu}
\author{Johnathan Bush}
\address[JB]{Department of Mathematics, Colorado State University, Fort Collins, CO 80523, United States}
\email{bush@math.colostate.edu}
\author{Florian Frick}
\address[FF]{Department of Mathematical Sciences, Carnegie Mellon University, Pittsburgh, PA 15213, United States}
\email{frick@cmu.edu}
\thanks{The research of HA and FF was supported through the program ``Research in Pairs" by the Mathematisches Forschungsinstitut Oberwolfach in 2019.
The research of FF was supported in part by NSF grant DMS-1855591.}
\date{\today}
\keywords{Metric thickening, Vietoris--Rips complexes, Borsuk--Ulam theorems, orbitopes, trigonometric polynomials, moment curves, optimal transport}

\begin{abstract}
\small
Thickenings of a metric space capture local geometric properties of the space.
Here we exhibit applications of lower bounding the topology of thickenings of the circle and more generally the sphere.
We explain interconnections with the geometry of circle actions on Euclidean space, the structure of zeros of trigonometric polynomials, and theorems of Borsuk--Ulam type.
We use the combinatorial and geometric structure of the convex hull of orbits of circle actions on Euclidean space to give geometric proofs of the homotopy type of metric thickenings of the circle.

Homotopical connectivity bounds of thickenings of the sphere allow us to prove that a weighted average of function values of odd maps $S^n \to \R^{n+2}$ on a small diameter set is zero.
We prove an additional generalization of the Borsuk--Ulam theorem for odd maps $S^{2n-1} \to \R^{2kn+2n-1}$.
We prove such results for odd maps from the circle to any Euclidean space with optimal quantitative bounds.
This in turn implies that any raked homogeneous trigonometric polynomial has a zero on a subset of the circle of a specific diameter; these results are optimal.
\end{abstract}

\maketitle
%\tableofcontents

\section{Introduction}

A compact metric space $X$ admits a canonical isometric embedding into $C(X)^*$, the dual space of real-valued continuous functions on~$X$.
If $C(X)^*$ is equipped with a Wasserstein metric, then convex combinations of nearby points of $X$ in $C(X)^*$ give a canonical thickening of the space $X$ that exhibits local connectivity properties of~$X$.
In particular, if $X$ is a sufficiently dense sample of points in an ambient space $Y$, and $Y$ satisfies additional conditions such as being a closed Riemannian manifold with certain curvature bounds, then these metric thickenings of $X$ recover the homotopy type of $Y$ at small scale parameters~\cite{Latschev2001,AAF}.

In the present manuscript we relate metric thickenings of the circle $S^1$ (and more generally the $n$-sphere~$S^n$) to convexity properties of orbits of circle actions on Euclidean space, to Borsuk--Ulam type theorems, and to the structure of zeros of trigonometric polynomials with a prescribed spectrum.
We will briefly introduce these notions here and state our main results.

\subsection*{Borsuk--Ulam theorems for higher-dimensional codomains.}

The classical Borsuk--Ulam theorem states that any continuous map $f\colon S^n \to \R^n$ identifies some point with its antipode: $f(x) = f(-x)$ for some $x \in S^n$.
Equivalently, any odd map $f \colon S^n \to \R^n$, namely a map satisfying $f(-x) = -f(x)$ for all $x \in S^n$, must have a zero: $f(x) = \vec{0}$ for some $x \in S^n$.
For lower-dimensional codomains, Gromov's ``waist of the sphere" theorem gives quantitative bounds for size of the preimage of some point:
for any map $f \colon S^n \to \R^k$ with $k \le n$, there is a point $y \in \R^k$ such that $\varepsilon$-neighborhoods of $f^{-1}(y)$ have volume bounded below by the volume of the $\varepsilon$-neighborhood of an $(n-k)$-dimensional equator of~$S^n$~\cite{gromov2003isoperimetry,guth2008waist,memarian2011gromov}.
Here we investigate quantified generalizations of the Borsuk--Ulam theorem for maps to Euclidean space of dimension greater than~$n$; see~\cite{malyutin2018neighboring} for a different generalization.
While a generic odd map $f \colon S^n \to \R^k$ for $k > n$ does not have a zero, we will show that a convex combination of function values must achieve zero for points contained in a set of diameter strictly less than~$\pi$.
Further, the diameter bounds obtained are sharp for maps of the form $S^1\to \R^k$, $S^n \to \R^{n+1}$, and $S^n\to \R^{n+2}$.

In the following, $r_n$ denotes the diameter of a regular $(n+1)$-simplex inscribed into~$S^n$, where $S^n$ carries the standard spherical metric and where each great circle has length~$2\pi$.

\begin{theorem}\label{thm:bu}
If $f\colon S^1\to \R^{2k+1}$ is odd and continuous, then there is a subset $X\subseteq S^1$ of diameter at most $\frac{2\pi k}{2k+1}$ such that $\conv(f(X))$ contains the origin.
\end{theorem}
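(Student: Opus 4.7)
The plan is to interpret the theorem inside the Vietoris--Rips metric thickening $\vrm{S^1}{r}$ at scale $r=\frac{2\pi k}{2k+1}$ and reduce to the classical Borsuk--Ulam theorem. First, I would linearly extend $f$ to a continuous map
\[\bar f\colon\vrm{S^1}{r}\to\R^{2k+1},\qquad \bar f\Bigl(\sum_i\lambda_i\delta_{x_i}\Bigr)=\sum_i\lambda_i f(x_i).\]
The antipodal $\Z/2$-action on $S^1$ induces one on $\vrm{S^1}{r}$, and oddness of $f$ makes $\bar f$ equivariant with respect to negation on $\R^{2k+1}$. The conclusion of the theorem is precisely the assertion that $\bar f$ has a zero, because an equality $\sum_i \lambda_i f(x_i)=0$ with $\sum_i \lambda_i=1$ is the same as $0\in\conv(f(X))$ for $X=\{x_i\}$.

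Suppose, toward a contradiction, that $\bar f$ is nowhere zero. Normalization then yields a continuous $\Z/2$-equivariant map $\bar f/\lvert\bar f\rvert\colon\vrm{S^1}{r}\to S^{2k}$. To clash with Borsuk--Ulam I would appeal to the orbitope machinery promised in the abstract to produce a $\Z/2$-equivariant continuous map $\varphi\colon S^{2k+1}\to\vrm{S^1}{r}$; composing with $\bar f/\lvert\bar f\rvert$ then gives an equivariant map $S^{2k+1}\to S^{2k}$, which is impossible. The map $\varphi$ should come from the Barvinok--Novik orbitope $\cB=\conv\gamma(S^1)\subset\R^{2k+2}$ with $\gamma(\theta)=(\cos\theta,\sin\theta,\cos 3\theta,\sin 3\theta,\dots,\cos(2k+1)\theta,\sin(2k+1)\theta)$: its boundary $\partial\cB$ is an antipodally symmetric $(2k+1)$-sphere, and the facet structure of $\cB$ expresses each boundary point as a convex combination of vertices $\gamma(\theta_i)$ whose parameters $\{\theta_i\}$ lie in an arc of length at most $r$, producing a canonical equivariant map $\partial\cB\to\vrm{S^1}{r}$.

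The main obstacle is the orbitope step and its sharpness: identifying $r=\frac{2\pi k}{2k+1}$ as exactly the scale at which the facet structure of $\cB$ witnesses a full $(2k+1)$-sphere worth of equivariant topology of $\vrm{S^1}{r}$. The same sharpness underlies the optimality of the diameter bound in the statement, which should be realized by (a suitable variant of) the odd trigonometric moment map itself, forcing any configuration whose convex image contains the origin to use the full collection of $(2k+1)$-st roots of unity. Once the orbitope input is available from the earlier sections of the paper, the short reduction above closes the argument.
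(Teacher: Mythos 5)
Your argument is correct, but it follows a genuinely different route from the paper's main proof; in fact it matches the alternative proof the paper explicitly sketches in the remark immediately following Theorem~\ref{thm:bu}. The paper's primary proof avoids orbitopes: it extends $f$ over the Vietoris--Rips \emph{simplicial complex} $\vr{S^1}{\frac{2\pi k}{2k+1}+\varepsilon}$, invokes the known homotopy type $\vr{S^1}{r}\simeq S^{2k+1}$ from Theorem~\ref{thm:adamaszek2017vietoris}, applies Borsuk--Ulam at scale slightly above $\frac{2\pi k}{2k+1}$, and then uses Carath\'eodory together with a compactness/limiting argument on $(S^1)^{2k+2}$ to tighten the diameter to exactly $\frac{2\pi k}{2k+1}$. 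Your route instead extends $f$ over the metric thickening $\vrm{S^1}{r}$ at the sharp scale $r=\frac{2\pi k}{2k+1}$ and pulls back along an equivariant inclusion $\iota\colon\partial\cB_{2k+2}\hookrightarrow\vrm{S^1}{r}$ (your $\gamma$ is $\sm_{2k+2}$), then applies Borsuk--Ulam to $\partial\cB_{2k+2}\cong S^{2k+1}$; this eliminates the limiting step entirely, since the faces of $\cB_{2k+2}$ already have $S^1$-diameter at most $\frac{2\pi k}{2k+1}$ by Corollary~\ref{cor:inclusion} (which rests on the Sinn and Vinzant results). Two caveats: you asserted but did not justify that $\iota$ is continuous — this is genuinely needed and is the content of Lemma~\ref{lem:iota-cont} in the paper, proved by a compact-to-Hausdorff argument — and, as the paper notes, the orbitope approach does not extend as cleanly to the higher-sphere generalizations (Theorems~\ref{thm:bu-Sn-2} and~\ref{thm:bu-Sn}), which is the reason the authors chose the Vietoris--Rips-complex proof as primary.
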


\begin{figure}[htb]
\centering
\includegraphics[width=0.15\textwidth]{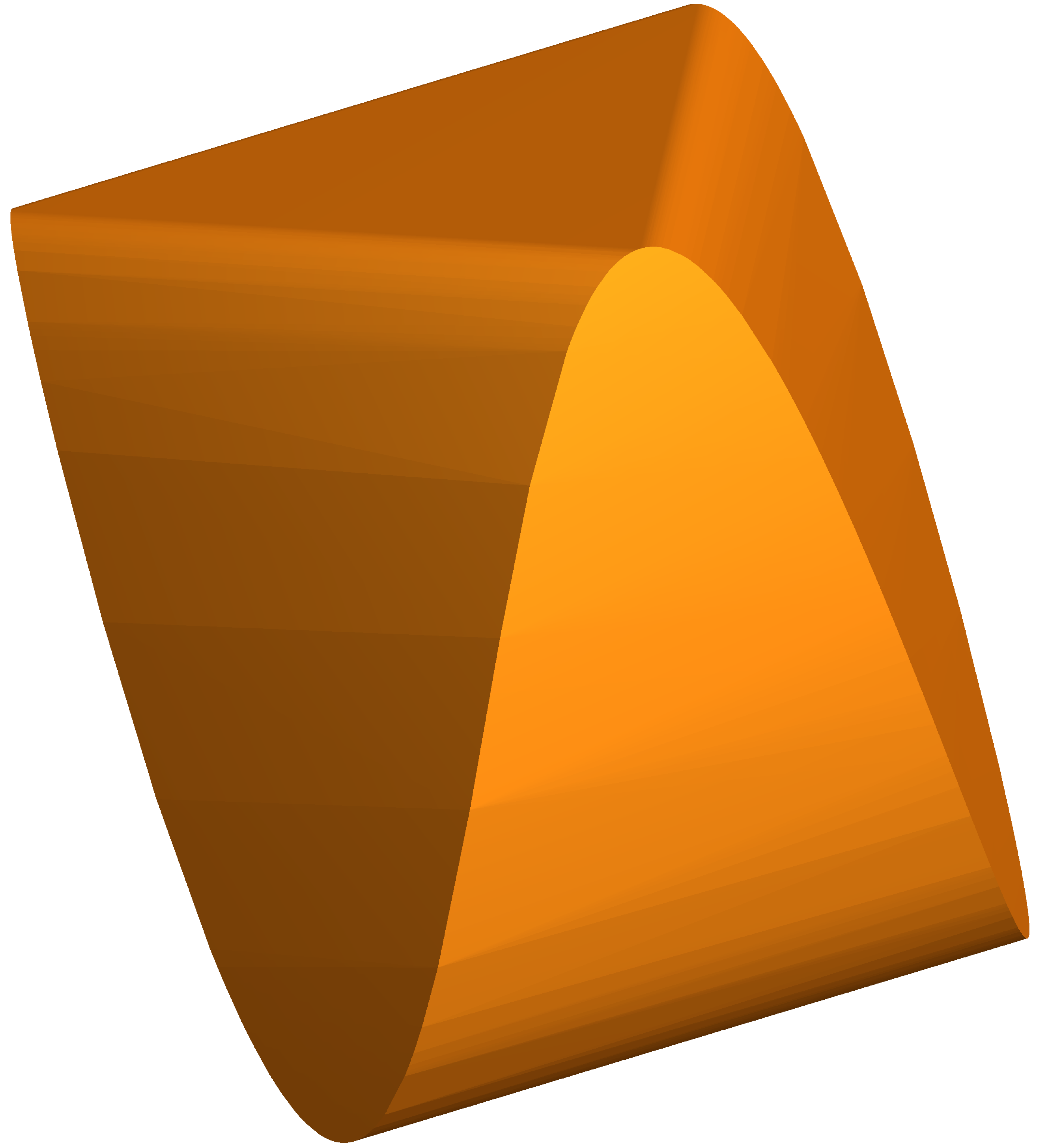}
\caption{The map $f\colon S^1\to \R^3$ defined by $f(t)=(\cos(t), \sin(t), \cos(3t))$ is odd; it contains (two) subsets $X\subseteq S^1$ of three equally-spaced points of diameter $\frac{2\pi}{3}$ with $\vec{0}\in\conv(f(X))$.}
\label{fig:convex}
\end{figure}

This result generalizes, with the same diameter bound, to odd maps $f\colon S^{2n-1}\to \R^{2kn+2n-1}$.
When $k=0$, we recover the classic Borsuk--Ulam theorem.

\begin{theorem}\label{thm:bu-Sn-2}
If $f\colon S^{2n-1}\to \R^{2kn+2n-1}$ is odd and continuous, then there is a subset $X\subseteq S^{2n-1}$ of diameter at most $\frac{2\pi k}{2k+1}$ such that $\conv(f(X))$ contains the origin.
\end{theorem}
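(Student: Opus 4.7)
The plan is to reduce to the $n=1$ case, Theorem~\ref{thm:bu}, via the join structure on $S^{2n-1}\subseteq\mathbb{C}^n$ induced by the free circle action of complex scalar multiplication, whose $\{\pm1\}$ subgroup is the antipodal action. Assume for contradiction that for some $r<\frac{2\pi k}{2k+1}$, every $X\subseteq S^{2n-1}$ with $\diam(X)\le r$ satisfies $\vec{0}\notin\conv(f(X))$. Then the barycentric evaluation
\[
\widetilde{f}\colon\sm(S^{2n-1};r)\longrightarrow\R^{2kn+2n-1},\qquad \mu\longmapsto\int_{S^{2n-1}}f\,d\mu,
\]
is continuous in the Wasserstein metric, $\Z/2$-equivariant for the antipodal action, and misses the origin; normalizing produces a $\Z/2$-equivariant map $\widetilde{f}\colon\sm(S^{2n-1};r)\to S^{2kn+2n-2}$. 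A compactness argument on the compact space $S^{2n-1}$ will allow the final conclusion to be promoted from the open condition to the closed bound $\diam(X)\le\frac{2\pi k}{2k+1}$.

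The key step is to produce an equivariant map from $S^{2kn+2n-1}$ (with the standard antipodal action) into $\sm(S^{2n-1};r)$. For this I take $n$ pairwise Hermitian-orthogonal orbit circles $C_j=\{e^{i\theta}e_j:\theta\in[0,2\pi)\}\subseteq S^{2n-1}$, where $e_1,\ldots,e_n$ is the standard basis of~$\mathbb{C}^n$. Each $C_j$ is antipodally invariant and isometric to $S^1$ (normalized so that a great circle has length~$2\pi$), and a direct computation shows that points on distinct orbits are at spherical distance exactly $\pi/2$, which is at most $\frac{2\pi k}{2k+1}$ for $k\ge1$. This orthogonality is precisely what is needed for the join map
\[
\Psi\colon\sm(C_1;r)\ast\cdots\ast\sm(C_n;r)\longrightarrow\sm(S^{2n-1};r),\qquad \sum_j t_j\mu_j\longmapsto\sum_j t_j\mu_j,
\]
to be well-defined, continuous, and equivariant for the diagonal antipodal $\Z/2$-action. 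For $r$ just above $\frac{2\pi k}{2k+1}$, specifically $r\in\bigl(\frac{2\pi k}{2k+1},\frac{2\pi(k+1)}{2k+3}\bigr)$, the homotopy type of $\sm(S^1;r)$ used in the proof of Theorem~\ref{thm:bu} gives each factor $\sm(C_j;r)\simeq S^{2k+1}$ equivariantly, so the $n$-fold join is equivariantly homotopy equivalent to $S^{(2k+2)n-1}=S^{2kn+2n-1}$ with the standard antipodal action. Composing with~$\widetilde{f}$ yields an odd continuous map $S^{2kn+2n-1}\to S^{2kn+2n-2}$, contradicting the classical Borsuk--Ulam theorem.

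The main obstacle will be verifying that the join map $\Psi$ genuinely produces the claimed equivariant homotopy type at the level of metric thickenings. One must check that, for a join point $\sum_j t_j\mu_j$, the combined support inside $S^{2n-1}$ has diameter at most $r$ (which is exactly where pairwise Hermitian-orthogonality of the $C_j$'s enters), that $\Psi$ is continuous in the Wasserstein metric, and that it intertwines the diagonal antipodal action on the join with the antipodal action on $\sm(S^{2n-1};r)$. A secondary subtlety is that the homotopy type of $\sm(S^1;r)$ jumps exactly at the critical scale $\frac{2\pi k}{2k+1}$, so the argument must be run just above this scale and then closed up by compactness to recover the sharp closed diameter bound stated in the theorem.
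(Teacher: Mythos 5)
Your overall strategy — decompose $S^{2n-1}$ as an $n$-fold join of orthogonal circles, exploit that distinct circles lie at spherical distance $\tfrac{\pi}{2}\le r$ to build an equivariant join map, then conclude via the Borsuk--Ulam argument plus a compactness limit — is exactly the structure of the paper's proof. However, there is a genuine gap in the specific vehicle you choose: you work throughout with the \emph{metric thickening} (you invoke the Wasserstein metric and a join of thickenings), and the argument hinges on the claim that for $r\in\bigl(\tfrac{2\pi k}{2k+1},\tfrac{2\pi(k+1)}{2k+3}\bigr)$ each circle factor has the homotopy type $S^{2k+1}$. That statement is known only for the simplicial complex $\vr{S^1}{r}$ (Theorem~\ref{thm:adamaszek2017vietoris}); for the metric thickening $\vrm{S^1}{r}$ it is precisely Conjecture~\ref{conj:homotopy-S1}, which this paper establishes only up to $r=\tfrac{2\pi}{3}$, i.e.\ $k\le 2$. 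So as written the proof is circular/relies on an open conjecture for $k\ge 2$.

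The repair is to run the identical join argument at the level of simplicial complexes, as the paper does: form the $\Z/2$-equivariant inclusion $(\vr{S^1}{r})^{*n}\hookrightarrow \vr{S^{2n-1}}{r}$ (the $\tfrac{\pi}{2}$ distance bound is what makes the image supports still have diameter $\le r$), use Theorem~\ref{thm:adamaszek2017vietoris} to get $(\vr{S^1}{r})^{*n}\simeq S^{2kn+2n-1}$, and note that the barycentric extension of $f$ to a simplicial complex is automatically continuous, so no Wasserstein machinery is needed. Then apply~\cite[Proposition~5.3.2(iv)]{matousek2003using} (which only needs $(2kn+2n-2)$-connectivity of the domain, not an equivariant homotopy equivalence to a sphere with the standard action, so you can also drop the "equivariantly homotopy equivalent" claim), and finish with your compactness argument. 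Also note the notational clash: the paper reserves $\sm_{2k}$ for the symmetric moment curve, so using $\sm(\cdot\,;\cdot)$ for a thickening is confusing; write $\vr{\cdot}{\cdot}$ or $\vrm{\cdot}{\cdot}$ as appropriate.
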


We remark that the diameter bound obtained in Theorem~\ref{thm:bu-Sn-2} also applies to odd and continuous maps $f\colon S^{2n}\to \R^{2kn+2n-1}$; indeed, $f$ restricted to the equator $S^{2n-1}\hookrightarrow S^{2n}$ is odd and continuous with the same codomain.

\begin{theorem}\label{thm:bu-Sn}
If $f \colon S^n \to \R^{n+2}$ is odd and continuous, then there is a subset $X \subseteq S^n$ of diameter at most $r_n$ such that $\conv(f(X))$ contains the origin.
\end{theorem}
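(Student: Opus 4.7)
My plan is to pass to the metric thickening $\vrm{S^n}{r_n}$ equipped with the antipodal $\Z/2$-action and apply an equivariant Borsuk--Ulam argument there. Define the linear extension $\bar f \colon \vrm{S^n}{r_n} \to \R^{n+2}$ by $\bar f(\mu) = \int_{S^n} f\,d\mu$; this is continuous in the Wasserstein topology and odd under $\mu \mapsto (-\mathrm{id})_*\mu$. Any zero $\mu_0$ of $\bar f$ immediately yields the conclusion: set $X := \supp(\mu_0)$. By definition of the thickening, $\diam X \le r_n$, and $0 = \int f\,d\mu_0$ realizes the origin as a convex combination of values of $f$ on $X$, so $0 \in \conv f(X)$. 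Thus it suffices to show $\bar f$ must vanish somewhere.

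To force such a zero, I would show that the $\Z/2$-coindex of $\vrm{S^n}{r_n}$ is at least $n+2$, i.e., that there is a $\Z/2$-equivariant map $S^{n+2} \to \vrm{S^n}{r_n}$. Otherwise $\bar f / |\bar f|$ would be a $\Z/2$-equivariant map $\vrm{S^n}{r_n} \to S^{n+1}$, and composition $S^{n+2} \to \vrm{S^n}{r_n} \to S^{n+1}$ would contradict classical Borsuk--Ulam. The geometric starting input is a direct equivariant embedding of $S^{n+1}$. Let $v_0,\ldots,v_{n+1} \in S^n$ be vertices of a regular $(n+1)$-simplex, so $v_i \cdot v_j = -1/(n+1)$ for $i \ne j$. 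Then the spherical distances compute as $d(v_i,v_j) = r_n$ and $d(v_i,-v_j) = \pi - r_n$, both at most $r_n$ since $r_n = \arccos(-1/(n+1)) \ge \pi/2$. Hence the abstract simplicial complex $\Sigma$ on vertex set $\{\pm v_i\}$ whose faces are the subsets avoiding antipodal pairs embeds into $\vrm{S^n}{r_n}$; combinatorially $\Sigma$ is the boundary of an $(n+2)$-dimensional cross-polytope, so $\Sigma \cong S^{n+1}$, and the embedding is $\Z/2$-equivariant. This establishes coindex $\ge n+1$.

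The main obstacle is upgrading the coindex from $n+1$ to $n+2$. My approach is to prove the stronger homotopical statement that $\vrm{S^n}{r_n}$ is $(n+1)$-connected. From this, the coindex bound follows by cell-by-cell equivariant obstruction theory: extending an initial antipodal pair $S^0 \hookrightarrow \vrm{S^n}{r_n}$ equivariantly all the way up to $S^{n+2}$ encounters at dimension $k$ an obstruction in $\pi_{k-1}(\vrm{S^n}{r_n})$, and all such groups for $k \le n+2$ vanish under the connectivity hypothesis. Establishing this connectivity is the technical heart of the argument and would presumably be carried out in a separate lemma of the paper, exploiting the $\so(n+1)$-equivariance of the thickening construction, the contractibility of each simplex of measures on a fixed small-diameter configuration, and the maximality constraint that no more than $n+2$ points of $S^n$ can be pairwise $r_n$-close (from $|\sum v_i|^2 \ge 0$ with $v_i\cdot v_j \ge -1/(n+1)$).
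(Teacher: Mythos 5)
Your proposal matches the paper's strategy at a high level: linearly extend $f$ to the metric thickening $\vrm{S^n}{r_n}$, observe that the extension is $\Z/2\Z$-equivariant with respect to the (free) antipodal action, and conclude it must vanish once $\vrm{S^n}{r_n}$ is known to be $(n+1)$-connected. The paper does not unpack the last step into a cell-by-cell obstruction argument as you propose; it simply cites Matou\v{s}ek's Proposition~5.3.2(iv), which is the same statement (any $\Z/2\Z$-equivariant map from an $(n+1)$-connected free $\Z/2\Z$-space into $\R^{n+2}$ hits the origin). Your explicit cross-polytope embedding on the vertices $\{\pm v_i\}$ of a regular inscribed simplex is a correct and appealing observation, but as you concede it only gives coindex $\ge n+1$, one short of what is needed.

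The genuine gap is that the entire argument hinges on the $(n+1)$-connectivity of $\vrm{S^n}{r_n}$, and you leave this as a placeholder with a sketch of heuristics (contractibility of individual simplices, the $n+2$-point maximality bound, $\so(n+1)$-equivariance) that do not by themselves deliver a proof. The paper does not establish this from scratch either: it invokes Theorem~5.4 of \cite{AAF}, which determines the full homotopy type $\vrm{S^n}{r_n}\simeq \Sigma^{n+1}\bigl(\so(n+1)/A_{n+2}\bigr)$, where $A_{n+2}$ acts on $\so(n+1)$ as the group of rotational symmetries of an inscribed regular $(n+1)$-simplex. Since this is an $(n+1)$-fold suspension of a connected space, $(n+1)$-connectivity is immediate. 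So your plan is structurally sound and identical in outline to the paper's proof, but the connectivity input is a hard theorem in its own right rather than a routine lemma, and without citing or reproving it the argument is incomplete.
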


The diameter bounds in Theorems~\ref{thm:bu} and \ref{thm:bu-Sn} are optimal.
In the case of the circle, there exist odd maps $f\colon S^1 \to \R^{2k}\subseteq\R^{2k+1}$ already into one dimension lower such that $\conv(f(X))$ misses the origin for any set $X$ of diameter less than $\frac{2\pi k}{2k+1}$.
Constructing such an example map $f$ also proves a result about the structure of zeros of raked trigonometric polynomials, which we explain next.

\subsection*{The structure of zeros of raked trigonometric polynomials.}

A trigonometric polynomial is an expression of the form $p(t) = c +  \sum_{k=1}^n a_k \cos(kt) + b_k \sin(kt)$, inducing a map $S^1 \to \R$.
In the case that $c=0,$ we call $p$ a \emph{homogeneous} trigonometric polynomial.
The set $S \subseteq \{1,\dots,n\}$ of integers $k$ with $a_k \ne 0$ or $b_k \ne 0$ is called the \emph{spectrum} of~$p$, and the largest integer in $S$ is the \emph{degree} of~$p$.
The spectrum of $p$ constrains the set of roots of~$p$; for example, if $p$ is homogeneous of degree $n$ then it has a root on any closed circular arc of length~$\frac{2\pi n}{n+1}$; see~\cite{Babenko1984,GilbertSmyth2000}.
Kozma and Oravecz in~\cite{KozmaOravecz2002} give upper bounds on the length of an arc where a trigonometric polynomial with spectrum bounded away from zero (that is, $S \subseteq [k,n]$) is non-zero.
If the spectrum of $p$ consists only of odd integers, then $p$ is called a \emph{raked} trigonometric polynomial.
We show the following structural result about the roots of raked trigonometric polynomials:

\begin{theorem}
\label{thm:trig}
Let $X\subseteq S^1$ be such that $\diam(X)<\frac{2\pi k}{2k+1}$.
Then there is a raked homogeneous trigonometric polynomial of degree $2k-1$ that is positive on~$X$.
Moreover, there is a set $X \subseteq S^1$ of diameter $\frac{2\pi k}{2k+1}$ such that no raked homogeneous trigonometric polynomial of degree $2k-1$ is positive on~$X$.
\end{theorem}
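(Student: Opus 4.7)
The plan is to identify raked homogeneous trigonometric polynomials of degree $2k-1$ with vectors in $\R^{2k}$ via the \emph{raked moment map} $f\colon S^1\to\R^{2k}$ defined by
\[ f(t) = (\cos t,\sin t,\cos 3t,\sin 3t,\ldots,\cos((2k-1)t),\sin((2k-1)t)). \]
Every raked homogeneous polynomial of degree $2k-1$ takes the form $p_v(t)=\langle v, f(t)\rangle$ for a unique $v\in\R^{2k}$ whose coordinates are the Fourier coefficients of $p_v$. Replacing $X$ by its closure does not change which polynomials are strictly positive on $X$, so the hyperplane separation theorem yields the equivalence: a raked polynomial of degree $2k-1$ is strictly positive on $X$ if and only if $\vec{0}\notin\conv(f(X))$. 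Both assertions of the theorem thereby become questions in convex geometry.

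For the ``moreover'' assertion, the plan is to exhibit the extremal set $X=\{2\pi j/(2k+1):j=0,1,\ldots,2k\}$ of $2k+1$ equally spaced points. A direct check gives $\diam(X)=\frac{2\pi k}{2k+1}$: since $2k+1$ is odd, the largest geodesic distance among these points is realized by pairs separated by $k$ consecutive gaps of size $2\pi/(2k+1)$. For each odd $m\in\{1,3,\ldots,2k-1\}$, the sum $\sum_{j=0}^{2k}e^{2\pi i mj/(2k+1)}$ is a geometric sum over a nontrivial $(2k+1)$-th root of unity and therefore vanishes. Hence $\sum_{j=0}^{2k}f\bigl(2\pi j/(2k+1)\bigr)=\vec{0}$, placing the origin at the centroid of $f(X)$, so $\vec{0}\in\conv(f(X))$. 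By separation, no raked polynomial of degree $2k-1$ is strictly positive on $X$.

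For the first assertion, the task is to show $\vec{0}\notin\conv(f(X))$ whenever $\diam(X)<\frac{2\pi k}{2k+1}$. Since $f$ is odd---all frequencies being odd force $f(t+\pi)=-f(t)$---this is precisely the statement that the raked map $f$ is a witness to the optimality of Theorem~\ref{thm:bu}, as remarked immediately after that theorem. In dual language, every probability measure $\mu$ on $S^1$ satisfying $\hat\mu(\pm 1)=\hat\mu(\pm 3)=\cdots=\hat\mu(\pm(2k-1))=0$ must have $\diam(\supp\mu)\ge\frac{2\pi k}{2k+1}$. The natural strategy is first to reduce to configurations with $|X|\le 2k+1$ via Carathéodory (since $f(X)\subseteq\R^{2k}$), and then to analyze these finite configurations---either through the face structure of the Barvinok--Novik orbitope $\conv(f(S^1))$, or by explicitly constructing a separating raked polynomial whose zeros are placed in the gaps of $X$.

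The main obstacle is this first assertion, which carries the quantitative heart of the theorem. The ``moreover'' direction is a straightforward roots-of-unity calculation; by contrast, ruling out small-diameter configurations with the prescribed vanishing moments amounts to a sharp Chebyshev--Markov type inequality and constitutes the geometric core of the argument.
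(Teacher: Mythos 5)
Your setup is correct as far as it goes: identifying a raked homogeneous trigonometric polynomial of degree $2k-1$ with a vector $v\in\R^{2k}$ via $p_v(t)=\langle v,\sm_{2k}(t)\rangle$, and observing that (for compact $X$) a strictly positive raked polynomial exists on $X$ if and only if $\vec{0}\notin\conv(\sm_{2k}(X))$, is exactly the duality underlying the paper's argument. Your proof of the ``moreover'' assertion is complete and correct: taking $X$ to be the $2k+1$ equally spaced points and noting $\sum_{j=0}^{2k} e^{2\pi i m j/(2k+1)}=0$ for each odd $m\le 2k-1$ shows $\vec{0}$ is the centroid of $\sm_{2k}(X)$. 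This is the same explicit example the paper uses to establish sharpness in Theorem~\ref{thm:miss-origin}, and it is arguably more direct than the paper's official route to this half of the statement, which passes through the Borsuk--Ulam--type Theorem~\ref{thm:bu} and Corollary~\ref{cor:P}.

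The first assertion, however, has a genuine gap. You reduce it correctly to the claim that $\vec{0}\notin\conv(\sm_{2k}(X))$ whenever $\diam(X)<\frac{2\pi k}{2k+1}$ --- precisely Theorem~\ref{thm:miss-origin} --- but you do not prove that claim; you call it ``the main obstacle'' and sketch candidate strategies (Carath\'eodory reduction plus either the orbitope's face structure or an explicitly constructed separating polynomial) without carrying any of them out. The paper's proof of this claim is the quantitative heart of the whole section: after reducing to $|X|=2k+1$ by Carath\'eodory, one computes the one-dimensional nullspace of the $2k\times(2k+1)$ moment matrix $M_{2k}$ via a Vandermonde-type determinant (Lemmas~\ref{lem:det} and~\ref{lem:nullspace}), obtaining $\lambda_i=(-1)^i\prod_{j<l,\,j,l\ne i}\sin(t_l-t_j)$; one then characterizes when the $\lambda_i$ share a sign by the combinatorial condition $\chi(t_i):=\#\{t_j : t_j\in(t_i+\pi,t_i)_{S^1}\}=k$ for all $i$ (Lemma~\ref{lem:alphasign}); and one shows this condition forces $\diam\ge\frac{2\pi k}{2k+1}$. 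None of this is supplied in your proposal. In short, you have correctly located the content of the theorem but have not provided it. (As a minor side note, your sentence ``replacing $X$ by its closure does not change which polynomials are strictly positive on $X$'' is not literally true --- strict positivity can fail on $\overline{X}\setminus X$ --- but the reduction you need only requires the easy direction, so the argument survives.)
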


The proof of the first part of Theorem~\ref{thm:trig} can be used to imply that the quantitative bound on the diameter in Theorem~\ref{thm:bu} is tight,
while the second part of this theorem is a corollary of Theorem~\ref{thm:bu}.

\subsection*{The symmetric moment curve and the Barvinok--Novik orbitope.}

The relation between Theorems~\ref{thm:bu} and~\ref{thm:trig} is explained by choosing $f\colon S^1 \to \R^{2k} \subseteq \R^{2k+1}$ to be the symmetric moment curve 
\[\sm_{2k}(t) = \bigl(\cos t, \sin t, \cos 3t, \sin 3t, \ldots, \cos(2k-1)t, \sin(2k-1)t\bigr),\]
which is an odd function.
The convex hull of the curve $\sm_{2k}$ is referred to as the \emph{Barvinok--Novik orbitope} $\cB_{2k}$.
Now, Theorem~\ref{thm:bu} implies that there is a set $X \subseteq S^1$ of diameter $\frac{2\pi k}{2k+1}$ such that $\conv(\sm_{2k}(X))$ captures the origin.
Thus, for any given $z \in \R^{2k}\setminus \{0\}$, the inner product $\langle z, \sm_{2k}(X)\rangle$ changes sign in~$X$ since no hyperplane can separate $\sm(X)$ from the origin.
For varying $z$, these inner products range over all possible raked homogeneous trigonometric polynomials, giving the second part of Theorem~\ref{thm:trig}.

The geometry of the Barvinok--Novik orbitope also shows that the bound $\frac{2\pi k}{2k+1}$ in Theorem~\ref{thm:bu} is optimal:

\begin{theorem}\label{thm:miss-origin}
Let $X\subseteq S^1$ be such that $\diam(X)<\mathcal{C}$.
Then the convex hull $\conv(\sm_{2k}(X))$ does not contain the origin $\vec{0}\in\R^{2k}$ if $\mathcal{C}=\frac{2\pi k}{2k+1}$, and this bound is sharp.
\end{theorem}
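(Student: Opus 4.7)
The plan is to split the statement into its two halves and handle each directly. The first half, that $\diam(X)<\tfrac{2\pi k}{2k+1}$ forces $\vec{0}\notin\conv(\sm_{2k}(X))$, will be reduced to the first part of Theorem~\ref{thm:trig} by hyperplane separation, while sharpness will be established by an explicit construction using $(2k+1)$ equally spaced points. The only real work is hidden inside Theorem~\ref{thm:trig}; once that is in hand, both parts here are essentially formal.

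For the first half, I exploit the natural bijection between vectors $z=(a_1,b_1,\ldots,a_k,b_k)\in\R^{2k}$ and raked homogeneous trigonometric polynomials of degree at most $2k-1$ given by
\[
\langle z,\sm_{2k}(t)\rangle=\sum_{j=1}^{k}\bigl(a_j\cos((2j-1)t)+b_j\sin((2j-1)t)\bigr).
\]
Given $X$ with $\diam(X)<\tfrac{2\pi k}{2k+1}$, the first part of Theorem~\ref{thm:trig} produces a raked homogeneous trigonometric polynomial of degree $2k-1$ that is strictly positive on $X$; pulling back through this correspondence supplies $z\in\R^{2k}$ with $\langle z,\sm_{2k}(x)\rangle>0$ for every $x\in X$. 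Extending by linearity, every convex combination $y\in\conv(\sm_{2k}(X))$ satisfies $\langle z,y\rangle>0$, which forbids $y=\vec{0}$.

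For sharpness, I take the equally spaced set $X=\{t_j:=\tfrac{2\pi j}{2k+1}\mid j=0,1,\ldots,2k\}\subseteq S^1$. The arc-distance between $t_i$ and $t_j$ equals $\tfrac{2\pi}{2k+1}\min(|i-j|,\,2k+1-|i-j|)$, maximized at $\tfrac{2\pi k}{2k+1}$, so $\diam(X)=\tfrac{2\pi k}{2k+1}$. For each odd frequency $\omega\in\{1,3,\ldots,2k-1\}$ we have $\omega\not\equiv0\pmod{2k+1}$, and therefore $\sum_{j=0}^{2k}e^{i\omega t_j}=0$ by the geometric-series identity; separating real and imaginary parts yields $\sum_j\cos(\omega t_j)=\sum_j\sin(\omega t_j)=0$. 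Assembling across all components of $\sm_{2k}$ gives $\sum_{j=0}^{2k}\sm_{2k}(t_j)=\vec{0}$, so $\vec{0}=\tfrac{1}{2k+1}\sum_j\sm_{2k}(t_j)\in\conv(\sm_{2k}(X))$, showing that the constant $\tfrac{2\pi k}{2k+1}$ cannot be replaced by anything larger.
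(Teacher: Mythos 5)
Your sharpness argument is correct and matches the paper: the $(2k{+}1)$ equally spaced points have diameter exactly $\tfrac{2\pi k}{2k+1}$, and the geometric-series identity (equivalently, $\sm_{2k}(t_j)$ being orthogonal to the constant vector since no odd frequency $\omega\le 2k-1$ is a multiple of $2k+1$) gives $\tfrac{1}{2k+1}\sum_j\sm_{2k}(t_j)=\vec{0}$.

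The first half, however, is circular in the logical structure of this paper. You reduce ``$\diam(X)<\tfrac{2\pi k}{2k+1}$ implies $\vec{0}\notin\conv(\sm_{2k}(X))$'' to the first part of Theorem~\ref{thm:trig} (the existence of a raked homogeneous trigonometric polynomial positive on $X$). But the first part of Theorem~\ref{thm:trig} is itself \emph{derived from} Theorem~\ref{thm:miss-origin} in the paper --- this is exactly Corollary~\ref{cor:pos-trig-poly}, whose proof reads ``This result is a corollary of Theorem~\ref{thm:miss-origin}\dots\ there is a separating hyperplane~$H_z$\dots.'' The two statements are linearly dual to each other (via the correspondence $z\leftrightarrow p_z=\langle z,\sm_{2k}(\cdot)\rangle$ and the separating hyperplane theorem, which is precisely your step 3), so reducing one to the other does no work; the substantive content must live in a direct proof of one of them, and you have supplied neither.

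The paper's actual argument is quite different and more concrete. By Carath\'eodory it suffices to treat $X=\{t_0,\dots,t_{2k}\}$ with $2k+1$ points. It sets up the $2k\times(2k+1)$ matrix $M_{2k}$ whose columns are $\sm_{2k}(t_i)$, computes the $2k\times 2k$ minors via a Vandermonde reduction (Lemma~\ref{lem:det}), identifies the one-dimensional nullspace of $M_{2k}$ explicitly with entries $\lambda_i=(-1)^i\prod_{j<l,\,j,l\ne i}\sin(t_l-t_j)$ (Lemma~\ref{lem:nullspace}), and then shows via a sign analysis (Lemma~\ref{lem:alphasign}) that a convex (nonnegative) solution exists only when each point has exactly $k$ of the others in its ``far'' open semicircle, i.e.\ $\chi(t_i)=k$ for all~$i$; this in turn forces $\diam\ge \tfrac{2\pi k}{2k+1}$. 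To repair your proposal you would need to either reproduce an independent proof of the first part of Theorem~\ref{thm:trig} (which, by your duality argument, is exactly as hard as the statement you are trying to prove) or carry out a direct argument along the lines of the nullspace/sign analysis above.
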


This also shows that, given $X\subseteq S^1$ with $\diam(X)<\frac{2\pi k}{2k+1}$, there is a raked trigonometric polynomial of degree $2k-1$ that is positive on~$X$.

\subsection*{Metric thickenings of the circle.}

For any metric space $X$, we get a continuum spectrum of metric spaces $\vrm{X}{r}$, $r > 0$, of \emph{Vietoris--Rips} metric thickenings that capture the local connectivity of~$X$~\cite{AAF}.
Note that the superscript $m$ is used here to denote the \emph{metric} thickening of the Vietoris--Rips complex, as opposed to the geometric realization, and is not a numerical parameter.
A point in $\vrm{X}{r}$ is a probability measure on $X$ with finite support of diameter at most $r$.
Recently, Vietoris--Rips thickenings have been used in topological data analysis and persistent homology~\cite{Carlsson2009,EdelsbrunnerHarer}; specifically, they allow for a growing filtration of topological spaces associated to a finite collection or sampling of data.
Conjecturally, for $X$ the circle~$S^1$, this spectrum ranges over all odd-dimensional spheres $S^1, S^3, S^5, \dots,$ until eventually becoming contractible.\footnote{These homotopy types are known for the Vietoris--Rips simplicial complexes $\vr{S^1}{r}$~\cite{adamaszek2017vietoris}, but not yet for the more natural Vietoris--Rips thickenings $\vrm{S^1}{r}$.}

\begin{conjecture}\label{conj:homotopy-S1}
For $\frac{2\pi(k-1)}{2k-1}\le r<\frac{2\pi k}{2k+1}$, the metric thickening $\vrm{S^1}{r}$ is homotopy equivalent to the boundary of the Barvinok--Novik orbitope $\cB_{2k}$, i.e.\ to the odd-dimensional sphere $S^{2k-1}$.
\end{conjecture}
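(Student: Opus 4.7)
\emph{Approach.} I would establish the conjecture by constructing an explicit homotopy equivalence between $\vrm{S^1}{r}$ and $\partial \cB_{2k}$ induced by the symmetric moment curve. Define a map
\[\Phi : \vrm{S^1}{r} \longrightarrow \partial \cB_{2k}, \qquad \mu \longmapsto \frac{\widetilde{\sm}_{2k}(\mu)}{\lambda(\mu)},\]
where $\widetilde{\sm}_{2k}(\mu) = \int_{S^1} \sm_{2k}\, d\mu$ and $\lambda(\mu) > 0$ is the unique scalar making the quotient lie on $\partial \cB_{2k}$. Since the origin is the centroid of $\sm_{2k}$ over $S^1$ (by oddness), it lies in the interior of $\cB_{2k}$, so $\lambda(\mu)$ is well-defined as soon as $\widetilde{\sm}_{2k}(\mu) \ne \vec{0}$. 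This nonvanishing is exactly Theorem~\ref{thm:miss-origin} applied to $\supp(\mu)$, whose diameter is at most $r < \frac{2\pi k}{2k+1}$. Continuity of $\Phi$ then follows from continuity of $\mu \mapsto \widetilde{\sm}_{2k}(\mu)$ in the Wasserstein topology on $\vrm{S^1}{r}$, combined with continuity of radial projection onto the boundary of the convex body $\cB_{2k}$.

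\emph{Homotopy inverse.} The second step is to construct $\Psi : \partial \cB_{2k} \to \vrm{S^1}{r}$ using the facial structure of the Barvinok--Novik orbitope. Every point $v \in \partial \cB_{2k}$ belongs to a unique minimal face $F(v)$, which is the convex hull of $\sm_{2k}(X)$ for some finite set $X \subseteq S^1$. Using the known face combinatorics of $\cB_{2k}$, one verifies that in the range $r \ge \frac{2\pi(k-1)}{2k-1}$ this set $X$ always has diameter at most $r$, so a Carath\'eodory representation $v = \sum_i \lambda_i \sm_{2k}(t_i)$ yields a measure $\Psi(v) = \sum_i \lambda_i \delta_{t_i}$ legitimately in $\vrm{S^1}{r}$. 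Then $\Phi \circ \Psi = \id$ on $\partial \cB_{2k}$, and a straight-line homotopy in the space of finitely supported measures from $\mu$ to $\Psi(\Phi(\mu))$—whose supports remain in the union $\supp(\mu) \cup X_{F(\Phi(\mu))}$ of diameter at most $r$—shows $\Psi \circ \Phi \simeq \id$.

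\emph{Main obstacle.} The hardest step will be producing a continuous $\Psi$ together with the requisite diameter bound on faces of $\cB_{2k}$. The Carath\'eodory representation is not canonical where faces meet, so global continuity requires either a partition-of-unity construction subordinate to the face poset of $\cB_{2k}$ or a uniqueness argument leveraging the very special geometry of $\sm_{2k}$. Moreover, the claim that every proper face of $\cB_{2k}$ has vertex set lying in an arc of length at most $\frac{2\pi(k-1)}{2k-1}$ is a refinement of the Barvinok--Novik neighborliness property and is precisely what matches the scale thresholds in the conjecture; this is expected to require the detailed face description due to Barvinok--Novik and Vinzant. An alternative strategy is to produce an open cover of $\vrm{S^1}{r}$ indexed by the face poset of $\cB_{2k}$ whose pieces are contractible, and then invoke the nerve lemma; however, verifying that this nerve recovers $\partial \cB_{2k}$ still hinges on the same facial combinatorics.
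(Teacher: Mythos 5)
Your approach is the same as the paper's: you define $\Phi = p \circ \sm_{2k}$ (the extended symmetric moment curve followed by radial projection onto $\partial\cB_{2k}$) and $\Psi = \iota$ (the inclusion of $\partial\cB_{2k}$ into $\vrm{S^1}{r}$ via Carath\'eodory representations on faces), then aim to show these are homotopy inverses with a straight-line homotopy. This is exactly the paper's strategy in Section~\ref{sec:results}. But you have misidentified where the real obstruction lies, and as a result your proposal contains a critical unproved assertion.

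The two obstacles you flag are both already resolved. Continuity of $\Psi$ is handled by a compactness argument (Lemma~\ref{lem:iota-cont}): the restriction $(p \circ \sm_{2k})|_{\iota(\partial\cB_{2k})}$ is a continuous bijection from a compact space to the Hausdorff space $\partial\cB_{2k}$, hence a homeomorphism, so its inverse $\iota$ is continuous — no partition of unity subordinate to the face poset is needed. The face-diameter bound you describe as needing verification is Corollary~\ref{cor:inclusion}, due to Sinn and Vinzant: every proper face of $\cB_{2k}$ is a simplex whose preimage in $S^1$ has diameter at most $\frac{2\pi(k-1)}{2k-1}$. Together these give the retraction $(p\circ\sm_{2k})\circ\iota = \id_{\partial\cB_{2k}}$, which yields Corollary~\ref{cor:nontrival-homology}, but not the homotopy equivalence.

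The actual gap is in the sentence you state as if it were routine: that the straight-line homotopy from $\mu$ to $\Psi(\Phi(\mu))$ has ``supports remain[ing] in the union $\supp(\mu)\cup X_{F(\Phi(\mu))}$ of diameter at most $r$.'' This is precisely the paper's Conjecture~\ref{conj:diam} — that $\diam(\supp(\mu)) = \diam(\supp(\mu)\cup\supp(\iota\circ p\circ\sm_{2k}(\mu)))$ — and it is open for $k\ge 3$. Nothing in the known facial structure of $\cB_{2k}$ guarantees that the face $F(\Phi(\mu))$ lies ``near'' $\supp(\mu)$ in $S^1$. The paper only establishes this for $r=\frac{2\pi}{3}$ (Proposition~\ref{prop:diam}), via a delicate cone-intersection argument (Proposition~\ref{prop:cones-one-third}) that relies on the complete face description of $\cB_4$ from Theorem~\ref{thm:faces}, which is unavailable for $k>2$. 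Without this diameter-nonincreasing property, the linear homotopy can exit $\vrm{S^1}{r}$, and the argument that $\Psi\circ\Phi\simeq\id$ does not close. The statement remains a conjecture precisely because this step is open.
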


As partial evidence towards this conjecture, we explain how Theorem~\ref{thm:miss-origin} implies that, for scale parameter $r$ in this range, the $(2k-1)$-dimensional homology, cohomology, and homotopy groups of $\vrm{S^1}{r}$ are nontrivial.

In Section~\ref{sec:results}, we show that Conjecture~\ref{conj:homotopy-S1} is true up to $r=\frac{2\pi}{3}$, the side-length of an inscribed equilateral triangle, where $\vrm{S^1}{r}\simeq S^3$.
More importantly, we provide a geometric picture of why this homotopy equivalence is plausible for $r\ge 0$, as follows.
For $\frac{2\pi(k-1)}{2k-1}\le r<\frac{2\pi k}{2k+1}$ we define a continuous map $\vrm{S^1}{r}\to\R^{2k}\setminus\{\vec{0}\}$ via the centrally symmetric moment curve $(\cos t, \sin t, \cos 3t, \sin 3t, \ldots)$.
We relate $\vrm{S^1}{r}$ to the facial structure of the Barvinok--Novik orbitope by composing with the radial projection map $\R^{2k}\setminus\{\vec{0}\}\to\partial\cB_{2k}$.
Finally, for $r=\frac{2\pi}{3}$ we obtain the homotopy equivalence $\vrm{S^1}{r}\simeq \partial \cB_4\cong S^3$ via a linear homotopy.
This is the only step that we are currently unable to extend to large $r$ and $k$; the missing ingredient is a ``diameter non-increasing" property for higher-dimensional Barvinok--Novik orbitopes (Conjecture~\ref{conj:diam}).

To our knowledge, this is the first approach to determine the homotopy type of a Vietoris--Rips thickening by mapping the underlying metric space into a higher-dimensional Euclidean space.
This technique is analogous to the ``kernel trick'' of machine learning, in which data is mapped into a higher dimensional space to illuminate the underlying structure of the data.

As a step towards understanding the relationship between metric thickenings of the circle and the Barvinok--Novik orbitopes, we show that given arbitrary $t_1,\dots, t_{2k-1}\in S^1$, there exists a raked homogeneous trigonometric polynomial $f$ of degree $2k-1$ with a root at each $t_i$ and its antipode. 
Further, the polynomial $f$ alternates signs between these roots, has no other roots in $S^1$, and may be written down explicitly in terms of the parameters $t_1,\dots, t_{2k-1}$ (Theorem~\ref{thm:farkasvector}).

We remark that there is an analogous connection between the \v{C}ech thickenings of the circle and the Carath\'{e}odory orbitopes, i.e.\ the convex hull of the curve $(\cos t, \sin t, \cos 2t, \sin 2t, \ldots, \cos kt, \sin kt\bigr)$~\cite{sanyal2011orbitopes}.

A preliminary version of several results in this paper appeared in the second author's master's thesis~\cite{BushMasters}.

\section{Preliminaries and related work}

In this section we review notation and related work on topology, Vietoris--Rips simplicial complexes, metric thickenings, convex geometry, moment curves, and orbitopes.

\subsection*{Topological and metric spaces}

We say two continuous maps $f,g\colon X\to Y$ are \emph{homotopic}, written $f\simeq g$, if there exists a continuous map $H\colon X\times [0,1]\to Y$ such that $H(x,0)=f(x)$ and $H(x,1)=g(x)$ for all $x\in X$~\cite{Hatcher}.
Such a map $H$ is called a \emph{homotopy}.
We say $X$ and $Y$ are \emph{homotopy equivalent}, denoted $X\simeq Y$, if there exist continuous maps $f\colon X\to Y$ and $g\colon Y\to X$ such that $g\circ f\simeq \mathrm{id}_X$ and $f\circ g \simeq \mathrm{id}_Y$.
We furthermore write $X\cong Y$ if spaces $X$ and $Y$ are homeomorphic.

Given a set of points $S\subseteq X$ in a metric space $(X,d)$, let the \emph{diameter} of $S$ be $\diam(S)=\sup\{d(x,y)~|~x,y\in S\}$; this value may be infinite.

\subsection*{Conventions regarding $S^1$}\label{ssec:circle}

We equip $S^1$ with the geodesic metric (of total circumference $2\pi$), though our results also hold when $S^1$ is instead equipped with the restriction of the Euclidean metric on $\R^2$.
Unless otherwise stated, we will always take a representative $t\in S^1=\R/2\pi\Z$ as belonging to $[0,2\pi)$.
Let $a,b\in S^1$, where $a\neq b$, and where $a$ and $b$ are each identified with a point in $[0,2\pi)$.
Define the \emph{open arc} $(a,b)_{S^1}$ as
\[(a,b)_{S^1}=\begin{cases}
\{t\in S^1 \mid a<t<b\}& \text{if }a<b\\
\{t\in S^1\mid a<t<b+2\pi \}& \text{if }a>b.
\end{cases}
\]
Define the \emph{closed arc} $[a,b]_{S^1}$ similarly. 

\subsection*{Vietoris--Rips simplicial complexes}

We identify an abstract simplicial complex with its geometric realization, which is a topological space.

\begin{definition}
Let $X$ be a metric space and fix $r\geq 0$.
The \emph{Vietoris--Rips simplicial complex of $X$ with scale parameter} $r$, denoted $\vr{X}{r}$, has $X$ as its vertex set and a finite subset $\sigma\subseteq X$ as a simplex whenever $\diam(\sigma)\leq r$.
\end{definition}
A point in $\vr{X}{r}$ can be written in barycentric coordinates as $\sum_{i=0}^k\lambda_i x_i$, with $\diam(\{x_0,\ldots,x_k\})\le r$.
We emphasize that in this paper we are using the $\le$ convention instead of the $<$ convention.

While the theorems of~\cite{hausmann1995vietoris,Latschev2001} describe conditions under which the homotopy type of a manifold is recoverable from a Vietoris--Rips complex for sufficiently small $r\ge 0$, much less is known about the topological behavior of these constructions for large values of $r$, even though large values of $r$ commonly arise in applications of persistent homology~\cite{EdelsbrunnerHarer}.
However, more is known in the specific case when the underlying manifold is the circle.
The following theorem from~\cite{adamaszek2017vietoris} is based on~\cite{Adamaszek2013,AAFPP-J}.

\begin{theorem}\label{thm:adamaszek2017vietoris}
Let $0\le r<\pi$.
There are homotopy equivalences 
\begin{align*}
\vr{S^1}{r}&\simeq\begin{cases} S^{2k-1} & \text{if }\, \frac{2\pi (k-1)}{2k-1}<r<\frac{2\pi k}{2k+1}\\
\bigvee^\mathfrak{c} S^{2k}& \text{if }\, r=\frac{2\pi k}{2k+1},
\end{cases}
\end{align*}
where $k=0,1,2,\ldots$, and where $\mathfrak{c}$ denotes the cardinality of the continuum.
\end{theorem}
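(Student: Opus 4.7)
My plan is to reduce the continuous problem to a finite combinatorial one on cyclic subsets of $S^1$ and then pass to a direct limit. First, for each integer $n \ge 3$ let $C_n \subset S^1$ denote $n$ equally-spaced points; then $\vr{C_n}{r}$ is a full subcomplex of $\vr{S^1}{r}$, and since each simplex in $\vr{S^1}{r}$ uses only finitely many vertices that can be perturbed into $C_n$ for sufficiently large $n$ without enlarging diameter, $\vr{S^1}{r}$ is weakly equivalent to the colimit of $\vr{C_n}{r}$ along the cofinal sub-system indexed by $m \mid n$.

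The core step is to identify the homotopy type of each finite complex. A subset of $C_n$ has diameter at most $r$ iff it lies in some closed arc of length at most $r$, so $\vr{C_n}{r}$ is the clique complex of the cyclic graph on $C_n$ whose edges connect vertices at cyclic distance at most $\lfloor nr/(2\pi)\rfloor$. Adamaszek's discrete Morse theory for cyclic clique complexes then shows that the homotopy type is governed by the ``winding fraction'' $w_n := \lfloor nr/(2\pi) \rfloor / n$: when $\tfrac{j-1}{2j-1} < w_n < \tfrac{j}{2j+1}$ the complex collapses onto $S^{2j-1}$, and when $w_n = \tfrac{j}{2j+1}$ it is a wedge of $S^{2j}$'s whose number of summands grows with $n$.

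With this in hand, pass to the colimit. For $r$ in the non-critical range $\bigl(\tfrac{2\pi(k-1)}{2k-1}, \tfrac{2\pi k}{2k+1}\bigr)$ the winding fractions $w_n$ eventually lie in $\bigl(\tfrac{k-1}{2k-1}, \tfrac{k}{2k+1}\bigr)$, so on the tail the bonding inclusions are homotopy equivalences between spaces of homotopy type $S^{2k-1}$, and the colimit is $S^{2k-1}$. At the critical value $r = \tfrac{2\pi k}{2k+1}$, uncountably many new $2k$-cycles appear in $\vr{S^1}{r}$, parametrized continuously by ``resonant'' configurations of $2k+2$ points in $S^1$ of total span exactly $r$; matching these against the growing finite wedges of $S^{2k}$ in the tail yields $\bigvee^{\mathfrak{c}} S^{2k}$.

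The main obstacle is the critical-value case: each finite complex contributes only finitely many $S^{2k}$ summands, so identifying $\mathfrak{c}$-many independent $2k$-classes in the colimit requires an explicit description of the spherical cycles coming from resonant configurations and a proof that they remain linearly independent in the limit. The non-critical case, by contrast, is essentially a stability statement once the discrete computation is in hand.
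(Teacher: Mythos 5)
This theorem is not proved in the present paper: it is stated as a citation to Adamaszek--Adams, ``The Vietoris--Rips complexes of a circle,'' so there is no internal argument to compare against. Your sketch is broadly faithful to the strategy of that source (reduce to cyclic graphs and their clique complexes, read off finite homotopy types from a winding-fraction invariant, pass to a limit), but two steps do not work as written. In the non-critical range, knowing that every $\vr{C_n}{r}$ in the tail has the homotopy type of $S^{2k-1}$ does not determine the colimit: you must show that the bonding inclusions $\vr{C_m}{r}\hookrightarrow\vr{C_n}{r}$ are themselves homotopy equivalences (equivalently, of degree $\pm 1$ on $H_{2k-1}$). That is a nontrivial stability lemma in the cited source, not a corollary of the homotopy-type computation. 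Your cofinality claim for the $C_n$ subsystem is also shaky: perturbing an arbitrary finite configuration into some $C_n$ ``without enlarging diameter'' can still \emph{add} simplices when a pairwise distance equals $r$ exactly, so it does not preserve the Vietoris--Rips combinatorics.

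The critical case $r=\frac{2\pi k}{2k+1}$ is more seriously misrepresented. For this $r$, the winding fraction $\lfloor nr/(2\pi)\rfloor/n$ equals $\frac{k}{2k+1}$ only when $(2k+1)\mid n$; for all other large $n$ it is strictly smaller (but still above $\frac{k-1}{2k-1}$), so $\vr{C_n}{r}\simeq S^{2k-1}$ for those $n$. Thus the tail of your directed system does not consist of growing wedges of $2k$-spheres---it alternates between odd spheres $S^{2k-1}$ and finite wedges of $S^{2k}$---and the colimit cannot be read off by ``matching against the growing finite wedges.'' Producing $\mathfrak{c}$-many independent $2k$-classes from the one-parameter family of regular $(2k+1)$-gons, and showing they survive in the colimit despite the intervening $S^{2k-1}$ stages, is the technical heart of the theorem at the critical radius; the picture you describe would not lead you to the needed argument.
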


Related papers include~\cite{gasparovic2018complete} which studies the 1-dimensional persistence of \v{C}ech and Vietoris--Rips complexes of metric graphs,~\cite{virk20171} which extends this to geodesic spaces,~\cite{virk2017approximations} which studies  approximations of Vietoris--Rips complexes by finite samples even at higher scale parameters, and~\cite{zaremsky2019} which applies Bestvina--Brady discrete Morse theory to Vietoris--Rips complexes.

\subsection*{Metric thickenings and optimal transport}

When a metric space $X$ is not finite, it is often impossible\footnote{A simplicial complex (for example $\vr{X}{r}$) is metrizable if and only if it is locally finite~\cite[Proposition~4.2.16(2)]{sakai2013geometric}.} to equip $\vr{X}{r}$ with a metric without changing the homeomorphism type.
In such instances the simplicial complex $\vr{X}{r}$ destroys the metric information about the underlying space $X$.
This motivates the consideration of the \emph{Vietoris--Rips metric thickening}, $\vrm{X}{r}$, which preserves metric information.

Let $\delta_x$ denote the Dirac delta mass at a point $x\in X$.

\begin{definition}[\cite{AAF}]\label{def:thickenings}
Let $X$ be a metric space and let $r\geq 0$.
The \emph{Vietoris--Rips thickening} is the set
\[\vrm{X}{r}=\left\{\sum_{i=0}^k \lambda_i \delta_{x_i}~\bigg\vert~k\in\N,\ x_i\in X,\ \diam(\{x_0,\dots,x_k\})\leq r,\ \lambda_i\geq 0,\ \sum \lambda_i=1\right\},\]
equipped with the $1$-Wasserstein metric.
\end{definition}

This metric is also called the Kantorovich, optimal transport, or earth mover's metric~\cite{vershik2013long,villani2003topics,villani2008optimal}; it provides a notion of distance between probability measures defined on a metric space.
Although it exists much more generally~\cite{edwards2011kantorovich,kellerer1984duality,kellerer1982duality}, the $1$-Wasserstein metric on $\vrm{X}{r}$ can be defined as follows.
Given $\mu,\mu'\in \vrm{X}{r}$ with $\mu=\sum_{i=0}^k \lambda_i \delta_{x_i}$ and $\mu'=\sum_{j=0}^{k'}\lambda_j' \delta_{x_j'}$, define a \emph{matching} $p$ between $\mu$ and $\mu'$ to be any collection of non-negative real numbers $\{p_{i,j}\}_{i,j}$ such that $\sum_{j=0}^{k'}p_{i,j}=\lambda_i$ and $\sum_{i=0}^k p_{i,j}=\lambda_j'$.
Define the \emph{cost} of the matching $p$ to be $\sum_{i,j} p_{i,j}d(x_i, x_j')$.
The $1$-Wasserstein distance between $\mu,\mu'\in\vrm{X}{r}$ is then the infimum, varying over all matchings $p$ between $\mu$ and $\mu'$, of the cost of $p$.

Note that $\vrm{X}{0}$ is isometric to $X$.
Contrary to the situation for an arbitrary Vietoris--Rips complex, the embedding $X\to \vrm{X}{r}$ into the Vietoris--Rips metric thickening given by $x\mapsto \delta_x$ is continuous.
In fact, more is true: $\vrm{X}{r}$ is an \emph{$r$-thickening} of $X$~\cite{Gromov,AAF}.
For this reason, we identify $x\in X$ with the measure $\delta_x\in \vrm{X}{r}$ in the image of this embedding. 

If $M$ is a complete Riemannian manifold with curvature bounded from above and below, then $\vrm{M}{r}$ is homotopy equivalent to $M$ for $r$ sufficiently small~\cite{AAF,AM}.
This property provides an analogue of Hausmann's theorem~\cite{hausmann1995vietoris} for metric thickenings.

Given a measure $\mu=\sum_{i=0}^k\lambda_i \delta_{x_i}$ with $\lambda_i>0$, we denote the \emph{support} of $\mu$ by $\supp(\mu)=\{x_0,\ldots,x_k\}$.

\subsection*{Convex geometry}

Convex geometry is the study of convex sets, especially polytopes and their facial structures~\cite{ziegler2012lectures}.
Given an arbitrary subset $Y\subseteq\R^n$, we let
\[\conv(Y)=\left\{\sum_{i=1}^k \lambda_i v_i~\bigg\vert~k\in\N,\ v_i\in Y,\ \lambda_i \geq 0,\ \sum_{i=1}^k \lambda_i = 1\right\}\]
denote the \emph{convex hull} of $Y$.
For example, Figure~\ref{fig:convex} shows the convex hull of the image of the map $f\colon S^1\to\R^3$ defined by $f(t) = (\cos(t), \sin(t), \cos(3t))$.
Similarly, the \emph{conical hull} of $Y$ is
\[\cone(Y)=\left\{\sum_{i=1}^k \lambda_i v_i~\bigg\vert~k\in\N,\ v_i\in Y,\ \lambda_i \geq 0\right\}.\]

Let $Y\subseteq \R^n$ be convex.
Define a \emph{face} of $Y$ to be any convex set $F\subseteq Y$ such that, given $x\in F$, if $x=\lambda y +(1-\lambda)z$ for some $0<\lambda<1$ and $y,z\in Y$, then $y,z\in F$.

\subsection*{The centrally symmetric trigonometric moment curve}

The centrally symmetric moment curve is analogous to the trigonometric moment curve, with the additional property that it is symmetric under reflecting through the origin.

\begin{definition}\label{def:sm-2k}
For $k\in\N$, the \emph{centrally symmetric moment curve} $\sm_{2k}\colon S^1\to\R^{2k}$ is defined by
\[\sm_{2k}(t) = \bigl(\cos t, \sin t, \cos 3t, \sin 3t, \ldots, \cos(2k-1)t, \sin(2k-1)t\bigr).\]
\end{definition}

Here we identify the domain $S^1$ with $\R/2\pi\Z$.
Since $\sm_{2k}(t+\pi)=-\sm_{2k}(t)$, we say that $\sm_{2k}$ is \emph{centrally symmetric} about the origin.
Interestingly, this curve is closely related to the multidimensional scaling (MDS) embedding $S^1\hookrightarrow \R^{2k}$ of the geodesic circle~\cite{adamsBlumsteinKassab,bibby1979multivariate,kassab2019multidimensional,von1941fourier}; multidimensional scaling is a way to map a metric space into Euclidean space in a way that distorts the metric (in some sense) as little as possible.

\subsection*{Barvinok--Novik orbitopes}

The \emph{Barvinok--Novik orbitope} is defined by $\cB_{2k} = \conv(\sm_{2k}(S^1)) \subseteq \R^{2k}$~\cite{barvinok2008centrally}.
This convex body is not the convex hull of a finite set of points; it is an \emph{orbitope} instead of a polytope~\cite{sanyal2011orbitopes}.

The faces of $\cB_{2k}$ are known for $k=2$; a subset of these faces are visible in Figure~\ref{fig:convex} (which is in $\R^3$ instead of $\R^4$).

\begin{theorem}[\cite{barvinok2008centrally,smilansky1985convex}]\label{thm:faces}
The proper faces of $\cB_4$ are 
\begin{itemize}
\item the 0-dimensional faces (vertices) $\sm_4(t)$ for $t\in S^1$,
\item the 1-dimensional faces (edges) $\conv(\sm_4(\{t_1,t_2\}))$ where $t_1\neq t_2$ are the edges of an arc of $S^1$ of length at most $\frac{2\pi}{3}$, and
\item the 2-dimensional faces (triangles) $\conv(\sm_4(\{t,t+\frac{2\pi}{3},t+\frac{4\pi}{3}\}))$ for $t\in S^1$.
\end{itemize}
\end{theorem}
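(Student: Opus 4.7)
The plan is to characterize the proper faces of $\cB_4$ by classifying the zero sets of the non-negative trigonometric polynomials that arise from supporting hyperplanes. Since $\cB_4 = \conv(\sm_4(S^1))$ is the convex hull of a compact curve, every proper face has the form $\conv(\sm_4(T))$ for some closed $T \subseteq S^1$, and (after checking that every face of this smooth orbitope is exposed) $T$ is the zero set of a non-negative trigonometric polynomial
\[
q_z(t) \;=\; c - z_1\cos t - z_2\sin t - z_3\cos 3t - z_4\sin 3t
\]
of degree at most $3$ whose spectrum is contained in $\{0,1,3\}$ — that is, the coefficients of $\cos 2t$ and $\sin 2t$ are forced to vanish. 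Classifying the faces of $\cB_4$ thus reduces to classifying all possible zero sets of such $q_z$.

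The crucial quantitative input is that a nonzero trigonometric polynomial of degree $d$ has at most $2d$ zeros in $[0,2\pi)$ counted with multiplicity, and that every zero of a non-negative such polynomial has even multiplicity; with $d=3$ this yields $|T|\le 3$. I would then realize each claimed face type by an explicit exposing polynomial. The regular triangle $T = \{t, t + \tfrac{2\pi}{3}, t + \tfrac{4\pi}{3}\}$ is exposed by $q_z(s) = 1 - \cos\bigl(3(s-t)\bigr)$, whose three double roots saturate the bound and whose spectrum is $\{0,3\}$. Any singleton $\{t\}$ is exposed by a small generic perturbation of this triangle polynomial that splits the triple root. Any pair $\{t_1, t_2\}$ at arc distance at most $\tfrac{2\pi}{3}$ is exposed by a carefully tuned combination of $1 - \cos(s-m)$ (centered at the midpoint $m$) and a frequency-$3$ term, with coefficients chosen to cancel the forbidden frequency-$2$ contribution while producing double roots at $t_1$ and $t_2$.

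Conversely, I rule out all other candidates for $T$. If $|T|=3$ the multiplicity count is saturated, forcing $q_z$ to be (up to a positive scalar) the product $\prod_{i=1}^{3}\sin^{2}\bigl(\tfrac{s-t_i}{2}\bigr)$; expanding this into Fourier modes and imposing the vanishing of the frequency-$2$ coefficient gives a trigonometric identity that forces $t_1, t_2, t_3$ to be equally spaced on $S^1$. If $|T|=2$ with arc distance between $t_1$ and $t_2$ strictly greater than $\tfrac{2\pi}{3}$, a parallel Fourier expansion of $\sin^{2}\bigl(\tfrac{s-t_1}{2}\bigr)\sin^{2}\bigl(\tfrac{s-t_2}{2}\bigr)$ shows that no correction by a spectrum-$\{1,3\}$ term can simultaneously kill the frequency-$2$ part and keep the polynomial non-negative on $S^1$.

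The main obstacle is the two Fourier-coefficient calculations in the converse, where the sharp threshold $\tfrac{2\pi}{3}$ must be extracted by explicit computation. These are the same kinds of computations that underpin Theorems~\ref{thm:bu} and~\ref{thm:miss-origin}, and the sharpness of the edge-length bound here is a direct geometric reflection of the sharpness of the diameter bound there. A secondary technical point is justifying that every proper face of $\cB_4$ is exposed; this can be bypassed by verifying exposedness directly for each face type produced above and then using the zero-count bound $|T|\le 3$ to conclude that no other face exists.
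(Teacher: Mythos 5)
The paper does not prove Theorem~\ref{thm:faces}; it imports it from the cited work of Smilansky and of Barvinok--Novik, so there is no in-text proof to compare against. Judged on its own terms, your plan is broadly reasonable and the quantitative computations are correct, but there is one real gap.

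The gap is the parenthetical assumption that every proper face of this ``smooth'' orbitope is exposed; that is false for $\cB_4$, and your proposed bypass fails for the same reason. The edges $\conv(\sm_4(\{t_1,t_2\}))$ with arc length \emph{exactly} $\tfrac{2\pi}{3}$ are proper faces of $\cB_4$ (they are faces of the exposed triangles, and a face of a face of a convex body is again a face), but they are \emph{not} exposed. Your own computation, carried one step further, shows this: set $t_1=-\alpha$, $t_2=\alpha$, $c=\cos\alpha$; a degree-$3$ trigonometric polynomial with double zeros at $\pm\alpha$ factors as $q(s)=\tfrac14(c-\cos s)^2(A+B\cos s+C\sin s)$, and killing the $\cos 2s$ and $\sin 2s$ coefficients forces $C=0$ and $A=2cB$, hence $q(s)=\tfrac{B}{4}(c-\cos s)^2(2c+\cos s)$. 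Nonnegativity in $x=\cos s\in[-1,1]$ requires $2c\ge 1$, i.e.\ arc length $\le\tfrac{2\pi}{3}$; but at the extremal value $2c=1$ the factor $2c+\cos s$ acquires a (double) zero at $s=\pi$, so the exposed face is the full equilateral triangle $\{-\tfrac{\pi}{3},\tfrac{\pi}{3},\pi\}$, not the edge. Thus extremal edges never occur as the zero set of a supporting hyperplane, and ``verify exposedness for the listed face types and use $|T|\le 3$ to exclude all others'' would silently drop them from the final list. The repair is standard: classify exposed faces exactly as you do (the zero-count $|T|\le 3$, the Fourier constraint forcing equal spacing when $|T|=3$, and the threshold $c>\tfrac12$ for $|T|=2$ are all right), then append every face of each exposed face. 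Since every proper face of a compact convex body lies in a proper exposed face, that enumeration is exhaustive, and the only new items are the arc-length-$\tfrac{2\pi}{3}$ edges of the triangles, recovering the stated list. A secondary slip: in the $|T|=2$ converse, the modification of $\sin^{2}\bigl(\tfrac{s-t_1}{2}\bigr)\sin^{2}\bigl(\tfrac{s-t_2}{2}\bigr)$ is multiplicative by the degree-$1$ factor $A+B\cos s+C\sin s$, not additive by a spectrum-$\{1,3\}$ term (which would destroy the prescribed double zeros); the frequency-$2$ constraints then remove two of those three parameters, which is what produces the sharp threshold.
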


Though the facial structure of the Barvinok--Novik orbitopes $\cB_{2k}$ is not known for $k>2$, certain neighborliness results have been established~\cite{barvinok2013neighborliness}.
Sinn has shown that the orbitopes are simplicial~\cite{sinn2013algebraic}. 
Additionally, Vinzant proved that the edges of $\partial\cB_{2k}$ consist of all line segments $\conv\left(\sm_{2k}(\{t_0,t_1\})\right)$ with $|t_0-t_1|\le\frac{2\pi(k-1)}{2k-1}$~\cite{vinzant2011edges}.
In other words, the edges of $\cB_{2k}$ are the same as the edges of $\vr{S^1}{\frac{2\pi (k-1)}{2k-1}}$.
The following is an immediate corollary of the work of Sinn and Vinzant.

\begin{corollary}[\cite{sinn2013algebraic,vinzant2011edges}]\label{cor:inclusion}
Every face of the Barvinok--Novik orbitope $\cB_{2k}$ is a simplex whose diameter in $S^1$ (not in $\R^{2k}$) is at most $\frac{2\pi (k-1)}{2k-1}$.
\end{corollary}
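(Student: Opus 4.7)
The plan is to combine the two cited results directly: Sinn's theorem tells us the face is a simplex, and Vinzant's theorem controls the $S^1$-length of each edge, which for a simplex is enough to bound the overall $S^1$-diameter.

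First, I would fix a proper face $F$ of $\cB_{2k}$. The vertices (i.e.\ $0$-dimensional faces) of $\cB_{2k}$ are exactly the points $\sm_{2k}(t)$ for $t \in S^1$; this can be seen from the fact that any exposed point lies on the curve and from standard facts about orbitopes with transitive symmetry group. By Sinn's result, $F$ is a simplex, so $F = \conv(\sm_{2k}(T))$ for some finite affinely independent set $T \subseteq S^1$, and the $S^1$-diameter of $F$ (as defined in the statement) is $\max_{t_0,t_1 \in T}|t_0-t_1|_{S^1}$.

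Next I would use the face-of-a-face principle: for any two vertices $\sm_{2k}(t_0), \sm_{2k}(t_1)$ of $F$, the segment $\conv(\sm_{2k}(\{t_0,t_1\}))$ is a face of $F$, hence a face of $\cB_{2k}$. By Vinzant's characterization of the edges of $\cB_{2k}$ (quoted just before the corollary), every such edge satisfies $|t_0-t_1|_{S^1} \le \frac{2\pi(k-1)}{2k-1}$. Applying this to every pair of vertices of $T$ and taking the maximum yields the desired bound on $\diam_{S^1}(F)$.

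The step that needs the most care is the face-of-a-face observation, namely that any edge of the simplex $F$ is itself a $1$-dimensional face of $\cB_{2k}$, so that Vinzant's theorem is applicable; this is where simpliciality from Sinn is essential, because without it the $1$-skeleton of $F$ need not consist of edges of $\cB_{2k}$. Once this is in place, the conclusion is immediate, and this is really the only nontrivial point in the argument.
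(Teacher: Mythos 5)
Your proof is correct and is essentially the argument the paper has in mind when it calls this an ``immediate corollary'' of Sinn and Vinzant (the paper gives no explicit proof). You correctly identify that simpliciality is what lets you apply the face-of-a-face principle to \emph{every} pair of vertices of $F$, so that each such segment is a genuine edge of $\cB_{2k}$ to which Vinzant's bound applies.
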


\section{A generalization of the Borsuk--Ulam theorem}\label{sec:bu}

The Borsuk--Ulam states that if $f\colon S^n\to \R^n$ is continuous, then there exists a point $x\in S^n$ with $f(x)=f(-x)$~\cite{matousek2003using}.
For maps into lower-dimensional Euclidean space, there is a generalization due to Gromov called the ``waist of the sphere" theorem~\cite{gromov2003isoperimetry,guth2008waist,memarian2011gromov}.
The theorem says that if $f\colon S^n\to\R^k$ is continuous with $k\le n$, then there is some point $y\in \R^k$ such that the $\varepsilon$-neighborhoods of $f^{-1}(y)$ have volume at least as large as the volume of the $\varepsilon$-neighborhood of an $(n-k)$-dimensional equator of~$S^n$.
There are also versions in which the size of a preimage is measured by its diameter; this is called the Urysohn width~\cite{akopyan2012borsuk,maliszewski2014level,tikhomirov1971some}.
In this section, we ask: what can be said for maps $f\colon S^n\to\R^k$ with $k\ge n$?

We say a map $f\colon S^n\to \R^k$ is \emph{odd} or \emph{centrally-symmetric} if $f(-x)=-f(x)$ for all $x\in S^n$.
An equivalent formulation of the Borsuk--Ulam states that if $f\colon S^n\to \R^n$ is continuous and odd, then there exists a point $x\in S^n$ with $f(x)=\vec{0}$~\cite[Theorem~2.1.1]{matousek2003using}.

More generally, given topological spaces $X$ and $Y$ equipped with $\Z/2\Z$-actions $\mu$ and $\nu$ respectively, we say a map $f\colon X\to Y$ is \emph{odd} or  \emph{$\Z/2\Z$-equivariant} if $f\circ\mu=\nu\circ f$.
Additionally, we always equip $\R^n$ with the standard antipodal $\Z/2\Z$-action specified by $x\mapsto -x$.
For our purposes, we consider another useful reformulation of the Borsuk--Ulam theorem as follows.
Given an $(n-1)$-connected space $X$,~\cite[Proposition~5.3.2(iv)]{matousek2003using} states that there is no $\Z/2\Z$-equivariant map from $X$ into $S^{n-1}$.
Hence, any odd map $f\colon X\to\R^n$ must hit the origin, because otherwise we would obtain an odd map $\frac{f}{|f|}\colon X\to S^{n-1}$.

In Theorems~\ref{thm:bu},~\ref{thm:bu-Sn-2}, and~\ref{thm:bu-Sn} we prove the generalizations of the Borsuk--Ulam thoerem for maps $S^n\to\R^k$ with $k\ge n$.
The first result is for $n=1$.

\begin{theorem-bu}
If $f\colon S^1\to \R^{2k+1}$ is odd and continuous, then there is a subset $X\subseteq S^1$ of diameter at most $\frac{2\pi k}{2k+1}$ such that $\conv(f(X))$ contains the origin.
\end{theorem-bu}

Equivalently, if $f\colon S^1\to \R^{2k+1}$ is continuous, then there exists a subset $\{x_1,\ldots,x_m\}\subseteq S^1$ of diameter at most $\frac{2\pi k}{2k+1}$ such that $\sum_{i=1}^m\lambda_i f(x_i)=\sum_{i=1}^m\lambda_i f(-x_i)$, for some choice of convex coefficients $\lambda_i$.

For example, if 
$f=\sm_{2k}\colon S^1 \to \R^{2k}\subseteq \R^{2k+1}$, then this set $X$ is easy to find: we can let $X$ be $2k+1$ evenly-spaced points on the circle.
Theorem~\ref{thm:miss-origin} shows that the above diameter bound is sharp, both for maps $S^1\to \R^{2k+1}$ and for maps $S^1\to\R^{2k}$.
Indeed, $\sm_{2k}\colon S^1 \to \R^{2k}\subseteq \R^{2k+1}$ is an odd map in which the convex hull of the image of every set of diameter strictly less than $\frac{2\pi k}{2k+1}$ misses the origin.

\begin{proof}
Fix $0<\varepsilon<\frac{2\pi}{(2k+1)(2k+3)}$; this ensures $\frac{2\pi k}{2k+1}+\varepsilon<\frac{2\pi(k+1)}{2k+3}$.
The induced map $f\colon\vr{S^1}{\frac{2\pi k}{2k+1}+\varepsilon}\to\R^{2k+1}$ given by $f(\sum_i\lambda_i x_i)=\sum_i\lambda_i f(x_i)$ is odd with domain $\vr{S^1}{\frac{k}{2k+1}+\varepsilon}\simeq S^{2k+1}$ by Theorem~\ref{thm:adamaszek2017vietoris}.
By the Borsuk--Ulam theorem, this map has a zero, giving a subset $X$ of diameter at most $\frac{2\pi k}{k+1}+\varepsilon$ with $\conv(f(X))$ containing the origin.
Furthermore, by Carath\'{e}odory's theorem, we can take the size of $X$ to be at most $2k+2$.
It remains to reduce the diameter to $\frac{2\pi k}{k+1}$ by a compactness argument.

For each integer $n\ge 1$, we obtain a subset $X_n\subseteq S^1$ of diameter bounded above by $\frac{2\pi k}{2k+1}+\frac{\varepsilon}{n}$, of size $|X_n|\le 2k+2$, and with $\vec{0}\in\conv(f(X_n))$.
If $|X_n|<2k+2$, then duplicate an arbitrary point in $X_n$ to obtain a multi-set of size exactly $2k+2$.
Arbitrarily order these points so that $X_n$ can be thought of as a point in the torus $(S^1)^{2k+2}$.
By compactness of the torus, the sequence $\{X_n\}$ has a subsequence converging to a limit configuration $X\in(S^1)^{2k+2}$ of diameter at most $\frac{2\pi k}{2k+1}$ and with $\vec{0}\in\conv(f(X))$.
Removing duplicate points (and ignoring the ordering) gives us the desired subset $X\subseteq S^1$.
\end{proof}

We remark that Theorem~\ref{thm:bu} may also be proven using the Barvinok--Novik orbitopes and Corollary~\ref{cor:inclusion}, though this perspective does not generalize as nicely for the proofs of Theorems~\ref{thm:bu-Sn-2} and~\ref{thm:bu-Sn}.
In particular, an odd and continuous map $f\colon S^1\to \R^{2k+1}$ defines a continuous map $F\colon \vrm{S^1}{\frac{2\pi k}{2k+1}}\to\R^{2k+1}$ obtained by extending linearly to convex combinations of points in $S^1$. 
Then, $F$ induces an odd and continuous map $\tilde{F}\colon \partial\cB_{2k+2}\to \R^{2k+1}$ factoring through the well-defined inclusion $\iota\colon\partial\cB_{2k+2}\hookrightarrow{} \vrm{S^1}{\frac{2\pi k}{2k+1}}$ (this inclusion exists by Corollary~\ref{cor:inclusion}).
Since $\partial\cB_{2k+2}$ is homeomorphic to a $(2k+1)$-sphere and hence $2k$-connected, we may apply~\cite[Proposition~5.3.2(iv)]{matousek2003using} to obtain the result.
This approach depends crucially on known properties of the boundary of the Barvinok--Novik orbitope $\cB_{2k+2}$ and does not immediately generalize to maps from higher-dimensional spheres.

%We remark that Theorem~\ref{thm:bu} may also be proved without appealing to the homotopy types of Vietoris--Rips simplicial complexes $\vr{S^1}{r}$ as follows. 
%Given an odd and continuous map $f\colon S^1\to \R^{2k+1}$ and $r\geq 0$, note that the domain of $f$ may be extended to $\vrm{S^n}{r}$ by extending linearly on simplices; this is continuous by~\cite[Lemma~5.2]{AAF}.
%Further, we obtain an induced map $\tilde{f}\colon \partial\cB_{2k+2}\to \R^{2k+1}$ defined by $\tilde{f}=f\circ \iota$, where $\iota$ is the inclusion map defined by $\iota\left(\sum_i \lambda_i \sm_{2k+2}(t_i)\right)=\sum_i\lambda_i t_i \in \vrm{S^1}{\frac{2\pi k}{2k+1}}$.
%It follows that $\tilde{f}$ is an odd map that is furthermore continuous by Lemma~\ref{lem:iota-cont} and well-defined by Corollary~\ref{cor:inclusion}. 
%Hence, as a continuous odd map with domain $\partial\cB_{2k+2}\simeq S^{2k+1}$ to $\R^{2k+1}$, $\tilde{f}$ hits the origin by~\cite[Proposition~5.3.2(iv)]{matousek2003using}.
%Consequently, there must exist a simplex of diameter at most $\frac{2\pi k}{2k+1}$ in $\iota(\cB_{2k+2})\subseteq \vrm{S^1}{\frac{2\pi k}{2k+1}}$ that contains a point mapping to the origin under $f$. 

\begin{corollary}\label{cor:odd-functions}
Fix a list of odd continuous functions $f_i(t)\colon S^1 \to \R$ for $1\le i\le 2k+1$.
Let $P$ be the set of functions of the form $p\colon S^1\to \R$ defined by $p(t)=\sum_{j=1}^{2k+1}z_j f_j(t)$ with $z_j\in \R$.
Then there is a subset $X\subseteq S^1$ of diameter at most $\frac{2\pi k}{2k+1}$ such that no function in $P$ is positive on $X$. 
\end{corollary}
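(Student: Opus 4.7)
The plan is to package the given odd functions into a single odd continuous map and apply Theorem~\ref{thm:bu} directly. Define $f\colon S^1 \to \R^{2k+1}$ by $f(t) = (f_1(t), f_2(t), \ldots, f_{2k+1}(t))$. Since each $f_i$ is odd and continuous, so is $f$. Theorem~\ref{thm:bu} then produces a subset $X \subseteq S^1$ of diameter at most $\frac{2\pi k}{2k+1}$ with $\vec{0} \in \conv(f(X))$.

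Next, I would translate ``no function in $P$ is positive on $X$'' into a separation statement about $\R^{2k+1}$. Any $p \in P$ is of the form $p(t) = \langle z, f(t) \rangle$ for some $z = (z_1,\ldots,z_{2k+1}) \in \R^{2k+1}$. So I argue by contradiction: suppose some $p \in P$ satisfies $p(t) > 0$ for all $t \in X$. Then $\langle z, v \rangle > 0$ for every $v \in f(X)$, and since strict positivity of a linear functional is preserved under convex combinations, $\langle z, w \rangle > 0$ for every $w \in \conv(f(X))$. But $\vec{0} \in \conv(f(X))$ gives $0 = \langle z, \vec{0}\rangle > 0$, a contradiction.

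There is essentially no obstacle here; the corollary is a direct dualization of Theorem~\ref{thm:bu}, with the linear functional $\langle z, \cdot \rangle$ playing the role of the separating hyperplane that fails to exist precisely because the origin lies in the convex hull. The only mild subtlety worth flagging is that the statement asks for \emph{strict} positivity of $p$ on $X$, which matches the strict inequality on the linear functional arising from positivity at each point of the finite (by Carath\'eodory) set $X$ used in the proof of Theorem~\ref{thm:bu}; no closure or limit argument is needed to conclude.
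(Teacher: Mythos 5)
Your proof is correct and takes essentially the same route as the paper: package the $f_i$ into a single odd map $f=(f_1,\dots,f_{2k+1})$, apply Theorem~\ref{thm:bu} to get $X$ with $\vec 0\in\conv(f(X))$, and then observe that a linear functional $\langle z,\cdot\rangle$ cannot be strictly positive on a set whose convex hull contains the origin. The paper states this last step directly (computing $\sum\lambda_i p(x_i)=0$ and concluding some $p(x_i)\le 0$) while you phrase it as a contradiction, but the content is identical.
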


\begin{proof}
Consider the odd map $f\colon S^1\to\R^{2k+1}$ given by $f(t)=(f_1(t),\ldots,f_{2k+1}(t))$.
Note that each function $p\in P$ is specified by a vector $z\in\mathbb{R}^{2k+1}$, in the sense that $p(t)=z^\intercal f(t)$ for all $t\in S^1$.
By Theorem~\ref{thm:bu}, there exists a subset $X\subseteq S^1$ of diameter at most $\frac{2\pi k}{2k+1}$ such that $\conv(f(X))$ contains the origin.
Hence, if we write $X=\{x_1,\ldots,x_m\}$ with $\sum_{i=1}^m\lambda_i f(x_i)=\vec{0}$ for $\lambda_i\ge 0$, then $\sum_{i=1}^m\lambda_i p(x_i)=\sum_{i=1}^m \lambda_i z^\intercal f(x_i)=z^\intercal\sum_{i=1}^m\lambda_i f(x_i)=z^\intercal\vec{0}=0$. 
In particular, $p(x_i)$ must be non-positive for at least some~$i$.
\end{proof}

The next corollary follows immediately from Corollary~\ref{cor:odd-functions}, and proves the second part of Theorem~\ref{thm:trig}.

\begin{corollary}\label{cor:P}
Fix a list of odd degrees $d_i$ for $1\le i\le 2k+1$, and fix a list of trigonometric functions $f_i(t)=\sin(t)$ or $f_i(t)=\cos(t)$.
Let $P$ be the set of all polynomials of the form $p(t)=\sum_{j=1}^{2k+1}z_j f_j(d_j t)$ with $z_j\in \R$.
Then there is a subset $X\subseteq S^1$ of diameter at most $\frac{2\pi k}{2k+1}$ such that no polynomial in $P$ is positive on $X$.
\end{corollary}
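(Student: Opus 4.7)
The plan is to invoke Corollary~\ref{cor:odd-functions} directly, once we verify that the building-block functions $t \mapsto f_j(d_j t)$ are odd in the correct sense. The key subtlety is that the paper's notion of odd on $S^1 = \R/2\pi\Z$ is with respect to the antipodal action $t \mapsto t+\pi$ (this is the action under which $\sm_{2k}$ is centrally symmetric, since $\sm_{2k}(t+\pi)=-\sm_{2k}(t)$), not the reflection $t \mapsto -t$. Under this convention, \emph{cosines} of odd frequencies count as odd functions, which is precisely why the raked hypothesis on degrees matters.

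First I would set $g_j(t) := f_j(d_j t)$ for $j = 1, \ldots, 2k+1$, where $f_j \in \{\sin, \cos\}$ and $d_j$ is an odd integer. Each $g_j$ is continuous on $S^1$. To check oddness, note that since $d_j$ is odd, $d_j(t+\pi) = d_j t + d_j \pi$ differs from $d_j t$ by an odd multiple of $\pi$. Hence
\[
\cos(d_j(t+\pi)) = -\cos(d_j t) \qquad \text{and} \qquad \sin(d_j(t+\pi)) = -\sin(d_j t),
\]
so in either case $g_j(t+\pi) = -g_j(t)$, confirming that $g_j$ is odd.

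Next, I would apply Corollary~\ref{cor:odd-functions} to the list $g_1, \ldots, g_{2k+1}$. The corollary produces a subset $X \subseteq S^1$ of diameter at most $\frac{2\pi k}{2k+1}$ such that no function of the form $\sum_{j=1}^{2k+1} z_j g_j(t)$ with $z_j \in \R$ is positive on $X$. But by construction, functions of this form are exactly the elements of $P$, namely $\sum_{j=1}^{2k+1} z_j f_j(d_j t)$. This completes the proof.

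There is no substantive obstacle here; the content of the corollary is entirely captured by Corollary~\ref{cor:odd-functions}. The only thing that could trip one up is confusing the two possible $\Z/2\Z$-actions on $S^1$, which is resolved by observing that the oddness of the degrees $d_j$ is precisely what makes the antipodal symmetry $t \mapsto t+\pi$ flip the sign of both $\cos(d_j t)$ and $\sin(d_j t)$.
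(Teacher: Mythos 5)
Your proposal is correct and takes the same route as the paper, which simply asserts that the corollary ``follows immediately from Corollary~\ref{cor:odd-functions}.'' Your explicit verification that $g_j(t)=f_j(d_j t)$ satisfies $g_j(t+\pi)=-g_j(t)$ for odd $d_j$ --- and your remark that this is where the oddness of the degrees is used, since under the antipodal action on $S^1$ a cosine of odd frequency is odd while a cosine of even frequency is not --- is exactly the ``immediate'' step the paper leaves to the reader.
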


For example, the above corollary applies if $P$ is the set of all raked homogeneous trigonometric polynomials of degree at most $2k-1$, namely
\[p(t)=\sum_{j=1}^k a_j \cos\bigl((2j-1)t\bigr)+\sum_{j=1}^k b_j \sin\bigl((2j-1)t\bigr),\]
after noting that we are considering the special case in which one of the constants $z_j$ defining $p(t)=\sum_{j=1}^{2k+1}z_j f_j(d_j t)$ is zero.

We also show the sharpness of the above result: the number of summands defining the trigonometric polynomial cannot be increased, and the upper bound on the diameter of $X$ can not be decreased, giving the first part of Theorem~\ref{thm:trig}.

\begin{corollary}\label{cor:pos-trig-poly}
Given a subset $X\subseteq S^1$ of diameter less than $\frac{2\pi k}{2k+1}$, there exists a raked homogeneous trigonometric polynomial of degree $2k-1$ that is positive on all of the points in $X$.
\end{corollary}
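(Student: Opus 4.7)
My plan is to deduce this corollary from Theorem~\ref{thm:miss-origin} via a standard hyperplane separation argument. First, I would replace $X$ by its closure $\overline{X}\subseteq S^1$. Since the strict inequality $\diam(X)<\frac{2\pi k}{2k+1}$ is preserved by passing to the closure, $\diam(\overline{X})<\frac{2\pi k}{2k+1}$ as well, so Theorem~\ref{thm:miss-origin} applies to $\overline{X}$ and guarantees that the origin is not contained in $\conv(\sm_{2k}(\overline{X}))$.

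Next, I would observe that $\overline{X}$ is compact (as a closed subset of the compact space $S^1$), so $\sm_{2k}(\overline{X})$ is compact, and therefore $\conv(\sm_{2k}(\overline{X}))\subseteq\R^{2k}$ is compact and convex. Since it is a compact convex set missing the origin, the strict separating hyperplane theorem yields a vector $z=(a_1,b_1,a_2,b_2,\dots,a_k,b_k)\in\R^{2k}$ such that
\[\langle z, \sm_{2k}(t)\rangle > 0 \quad \text{for every } t\in \overline{X},\]
and in particular for every $t\in X$.

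Finally, I would unpack the inner product: by Definition~\ref{def:sm-2k},
\[\langle z,\sm_{2k}(t)\rangle=\sum_{j=1}^k \bigl(a_j\cos((2j-1)t)+b_j\sin((2j-1)t)\bigr),\]
which is precisely a raked homogeneous trigonometric polynomial of degree at most $2k-1$. To ensure the degree is exactly $2k-1$, I would note that the pair $(a_k,b_k)$ may be perturbed by an arbitrarily small amount without destroying positivity on the compact set $\overline{X}$, so we can assume $(a_k,b_k)\ne(0,0)$, yielding a polynomial of degree exactly $2k-1$ positive on $X$.

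There is no serious obstacle: the argument is a textbook separation step, and the only mild care needed is (i) taking the closure so that compactness justifies \emph{strict} separation, and (ii) the small perturbation to enforce the exact degree. Both are routine.
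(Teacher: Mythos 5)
Your proof is correct and takes essentially the same approach as the paper: apply Theorem~\ref{thm:miss-origin} and then strictly separate the origin from $\conv(\sm_{2k}(X))$ by a hyperplane, noting that the resulting linear functional composed with $\sm_{2k}$ is a raked homogeneous trigonometric polynomial of degree $2k-1$. Your two additions---passing to the closure $\overline{X}$ so that $\conv(\sm_{2k}(\overline{X}))$ is compact, which is what justifies \emph{strict} separation, and perturbing $(a_k,b_k)$ to guarantee degree exactly $2k-1$---are careful refinements of details the paper's terse proof glosses over.
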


\begin{proof}
This result is a corollary of Theorem~\ref{thm:miss-origin} in Section~\ref{sec:miss-origin}, which says that the convex hull $\conv(\sm_{2k}(X))$ does not contain the origin.
Hence, there is a separating hyperplane $H_z$ with orthogonal vector $z\in \R^{2k}$ given by $H_z=\{x\in\R^{2k}~|~z^\intercal x > 0\}$ such that $\sm_{2k}(X)\subseteq H_z$.
Therefore, the raked homogeneous trigonometric polynomial of degree $2k-1$ given by $p_z(x)=z^\intercal \sm_{2k}(x)$ is positive on all of the points in $X$.
\end{proof}

We give two versions of the Borsuk--Ulam theorem for maps $S^n\to\R^k$ with $k\ge n$, now also with $n\ge 2$.
Our results would be strengthened if we better understood the homotopy types of Vietoris--Rips thickenings of $n$-spheres for $n\ge2$ at all scale parameters.

The first version generalizes Theorem~\ref{thm:bu} (take $n=1$) to maps from odd-dimensional spheres into Euclidean spaces of higher dimensions.

\begin{theorem-bu-Sn-2}
If $f\colon S^{2n-1}\to \R^{2kn+2n-1}$ is odd and continuous, then there is a subset $X\subseteq S^{2n-1}$ of diameter at most $\frac{2\pi k}{2k+1}$ such that $\conv(f(X))$ contains the origin.
\end{theorem-bu-Sn-2}

\begin{proof}
The case of $k=0$ follows from the standard Borsuk--Ulam theorem.

For $k\ge 1$, we will think of $S^{2n-1}$ as a join of $n$ circles $(S^1)^{*n}$.
Explicitly, if $S^{2n-1}$ is viewed as the unit sphere in $\R^{2n}$, then the subset of $S^{2n-1}$ with all coordinates zero, with the (possible) exception of coordinates $2i-1$ and $2i$, is a circle.
The distance between any two points in distinct such circles is $\frac{\pi}{2}$ in the geodesic metric.
Let $\frac{2\pi k}{2k+1}<r<\frac{2\pi (k+1)}{2k+3}$.
Since $k\ge 1$ implies $r>\frac{\pi}{2}$, this will allow us to construct a $\Z/2\Z$-equivariant embedding of $(\vr{S^1}{r})^{*n}$ into $\vr{S^{2n-1}}{r}$.
In barycentric coordinates, a point in $\vr{S^1}{r}$ can be written as $\sum_{x\in X}\lambda_x x$, where the vertex set $X$ of the simplex containing this point has diameter at most $r$ and where the positive coefficients $\lambda_x$ sum to one.
Hence, a point in $(\vr{S^1}{r})^{*n}$ consists of $n$ collections of such points $\sum_{x\in X_i}\lambda_x x$ for $1\le i\le n$, along with non-negative numbers $\kappa_1, \ldots, \kappa_n$ that add up to one.
We map the points in $X_i$ to the $i$-th copy of $S^1$ in $S^{2n-1} = (S^1)^{*n}$, and we multiply their weights
by $\kappa_i$.
This gives the barycentric coordinates of a well-defined point in $\vrm{S^{2n-1}}{r}$; the diameter of the supporting simplex is at most $r$ since $r>\frac{\pi}{2}$.
Furthermore, this map respects the antipodal $\Z/2\Z$-actions on $(\vr{S^1}{r})^{*n}$ and $\vr{S^{2n-1}}{r}$; these actions are free since antipodal points are at distance $r<\pi$ apart.
By Theorem~\ref{thm:adamaszek2017vietoris} we have
\[(\vr{S^1}{r})^{*n}\simeq (S^{2k+1})^{*n}=S^{(2k+1)n+n-1}=S^{2kn+2n-1}.\]
It follows from~\cite[Proposition~5.3.2(iv)]{matousek2003using} that any odd map from $(\vr{S^1}{r})^{*n}$, and hence also from $\vr{S^{2n-1}}{r}$, into $\R^{2kn+2n-1}$ hits the origin.
This gives a subset $X\subseteq S^{2n-1}$ of diameter at most $r=\frac{2\pi k}{2k+1}+\varepsilon$ such that $\conv(f(X))$ contains the origin.
By a compactness argument as in the proof of Theorem~\ref{thm:bu}, we can reduce this diameter to exactly $\frac{2\pi k}{2k+1}$.
\end{proof}

In the following theorem, $r_n$ is the diameter of an inscribed regular $(n+1)$-simplex in~$S^n$.

\begin{theorem-bu-Sn}
If $f \colon S^n \to \R^{n+2}$ is odd and continuous, then there is a subset $X \subseteq S^n$ of diameter at most $r_n$ such that $\conv(f(X))$ contains the origin.
\end{theorem-bu-Sn}

This diameter bound is sharp, both for maps $S^n\to\R^{n+2}$ and maps $S^n\to\R^{n+1}$.
Indeed, the standard inclusion $f\colon S^n\hookrightarrow \R^{n+1}\subseteq \R^{n+2}$ is an odd map that satisfies $\vec{0}\notin\conv(f(X))$ for all $X\subseteq S^n$ of diameter less than $r_n$~\cite[Proof of Lemma~3]{lovasz1983self}.

\begin{proof}
The space $\vrm{S^n}{r_n}$ has a free $\Z/2\Z$-action that maps the convex combination $\sum_{i=1}^k \lambda_i\delta_{x_i}$ of Dirac measures for points $x_1, \dots, x_k$ on~$S^n$ to $\sum_{i=1}^k \lambda_i \delta_{-x_i}$, that is, to the measure that is supported on the antipodal point set with the same weights~$\lambda_i$.
This action is free since antipodal points on $S^n$ are farther than $r_n$ apart. 

Let $f\colon S^n\to\R^{n+2}$ be odd and continuous. Because $f$ is bounded,~\cite[Lemma~5.2]{AAF} implies that $f$ induces a continuous map $F\colon \vrm{S^n}{r_n} \to \R^{n+2}$ defined by $F(\sum_{i=1}^k \lambda_i\delta_{x_i}) = \sum_{i=1}^k \lambda_if(x_i)$. Notice that $F$ commutes with the antipodal action on $\vrm{S^n}{r_n}$ and~$S^n$:
\[F\left(\sum_{i=1}^k \lambda_i\delta_{-x_i}\right) = \sum_{i=1}^k \lambda_if(-x_i) = -\sum_{i=1}^k \lambda_if(x_i) = -F\left(\sum_{i=1}^k \lambda_i\delta_{x_i}\right).\]

Next, fix a regular $(n+1)$-simplex $\Delta$ inscribed in $S^n$, and let $A_{n+2}$ denote the group of rotational symmetries of $\Delta$, that is, the alternating group on $n+2$ elements.
In a noncanonical fashion, we may identify $A_{n+2}$ as a subgroup of $\mathrm{SO}(n+1)$ by associating to each $g\in A_{n+2}$ the matrix $M_g\in \mathrm{SO}(n+1)$ such that $M_g\cdot v=g\cdot v$ for each vertex $v$ of $\Delta$.
In this way, we obtain the orbit space $\frac{\mathrm{SO}(n+1)}{A_{n+2}}$ of $\mathrm{SO}(n+1)$ under the action of $A_{n+2}$ by left multiplication.
Theorem~5.4 of~\cite{AAF} states that the homotopy type of $\vrm{S^n}{r_n}$ is $\Sigma^{n+1} \frac{\mathrm{SO}(n+1)}{A_{n+2}}$, and because $\frac{\mathrm{SO}(n+1)}{A_{n+2}}$ is connected, its $(n+1)$-fold suspension $\Sigma^{n+1} \frac{\mathrm{SO}(n+1)}{A_{n+2}}\simeq\vrm{S^n}{r_n}$ is $(n+1)$-connected.
Thus, the map~$F$, as a $\Z/2\Z$-equivariant map from an $(n+1)$-connected space to~$\R^{n+2}$, has a zero~\cite[Proposition~5.3.2(iv)]{matousek2003using}.
That is, there are points $x_1, \dots, x_m \in S^n$ that are pairwise at distance at most $r_n$ and such that $\sum_{i=1}^m \lambda_i f(x_i) =\vec{0}$ for some $\lambda_1, \dots, \lambda_m \ge 0$ with $\sum_{i=1}^m \lambda_i = 1$.
\end{proof}

\section{Diameter bound for Carath{\'e}odory sets on the symmetric moment curve}\label{sec:miss-origin}

Let $Y\subseteq \R^k$ be a set in Euclidean space.
Carath{\'e}odory's theorem states that if the convex hull of $Y$ contains the origin, then there is a subset of $Y$ of at most $k+1$ points whose convex hull also contains the origin. 
We say that $Y'\subseteq Y$ is a \emph{Carath{\'e}odory subset of $Y$} if the convex hull of $Y'$ contains the origin.
The following theorem gives a lower bound on the diameter of the preimage of any Carath{\'e}odory subset of the symmetric moment curve in $\R^{2k}$.
Here, the circle $S^1$ is equipped with the geodesic metric of total circumference $2\pi$.

\begin{theorem-miss-origin}
Let $X\subseteq S^1$ be such that $\diam(X)<\mathcal{C}$.
Then the convex hull $\conv(\sm_{2k}(X))$ does not contain the origin $\vec{0}\in\R^{2k}$ if $\mathcal{C}=\frac{2\pi k}{2k+1}$, and this bound is sharp.
\end{theorem-miss-origin}

To prove Theorem~\ref{thm:miss-origin}, we may restrict attention to subsets of $\sm_{2k}(S^1)$ of size at most $2k+1$ by Carath{\'e}odory's theorem.
Suppose $X=\{t_0,\dots,t_{2k}\}\subseteq S^1$ is such that the origin is contained in the convex hull of $\{\sm_{{2k}}(t_0),\dots,\sm_{2k}(t_{2k})\}$.
Then, there exist scalars $\lambda_i\geq 0$ such that $\vec{0}=\sum_{i=0}^{2k} \lambda_i \sm_{2k}(t_i)$ and $\sum_{i=0}^{2k} \lambda_i=1$.
In this way, we obtain a system of $2k$ equations
\[\sum_{i=0}^{2k} \lambda_i \cos(n t_i)=0 \quad\text{and}\quad \sum_{i=0}^{2k} \lambda_i \sin(n t_i)=0 \quad\text{for}\quad n=1,3,\dots,2k-1.\]
We therefore let $M_{2k}$ be the $2k\times(2k+1)$ matrix
\[M_{2k}=\begin{pmatrix} 
\cos(t_0) & \cos(t_1)   & \hdots & \cos(t_{2k})  \\
\sin(t_0) & \sin(t_1)   & \hdots & \sin(t_{2k})  \\
\cos(3t_0) & \cos(3t_1) & \hdots & \cos(3t_{2k}) \\
\sin(3t_0) & \sin(3t_1) & \hdots & \sin(3t_{2k}) \\
\vdots & \vdots & \ddots & \vdots \\
\cos((2k-1)t_0) & \cos((2k-1)t_1) & \hdots & \cos((2k-1)t_{2k}) \\
\sin((2k-1)t_0) & \sin((2k-1)t_1) & \hdots & \sin((2k-1)t_{2k}) \\
\end{pmatrix},\]
and consider the vector equation $M_{2k}\vec{\lambda}=\vec{0}$.
To prove Theorem~\ref{thm:miss-origin}, we build towards describing the nullspace of $M_{2k}$, which we complete in Lemma~\ref{lem:nullspace}.

\begin{lemma}\label{lem:det}
Let $A$ denote the $2k\times 2k$ matrix whose columns are $\sm_{2k}(t_1),\sm_{2k}(t_2),\ldots,\sm_{2k}(t_{2k})$.
Then
$\det(A)=\kappa \prod_{1\leq j<l\leq 2k}\sin(t_l-t_j)$
for some nonzero constant $\kappa$ depending only on $k$.
\end{lemma}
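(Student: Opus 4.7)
The plan is to reduce $\det(A)$ to a Vandermonde determinant in the squared exponentials $z_j^2 = e^{2it_j}$. Set $z_j = e^{it_j}$ and use $\cos((2m-1)t_j) = \tfrac{1}{2}(z_j^{2m-1} + z_j^{-(2m-1)})$ and $\sin((2m-1)t_j) = \tfrac{1}{2i}(z_j^{2m-1} - z_j^{-(2m-1)})$. For each of the $k$ pairs of rows corresponding to a frequency $2m-1$, apply the invertible linear transformation taking $(\cos((2m-1)t_j), \sin((2m-1)t_j))^\intercal$ to $(z_j^{2m-1}, z_j^{-(2m-1)})^\intercal$; the associated $2 \times 2$ change-of-basis matrix has determinant $-2i$, so the full row operation multiplies $\det(A)$ by a constant $(-2i)^k$ depending only on $k$, producing a new matrix $B$ whose $j$-th column has entries $z_j^{\pm 1}, z_j^{\pm 3}, \ldots, z_j^{\pm(2k-1)}$.

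Next, scale column $j$ of $B$ by $z_j^{2k-1}$ to obtain a matrix $C$ whose $j$-th column consists of monomials $z_j^n$ with $n$ ranging over the set $\{0, 2, 4, \ldots, 4k-2\}$. This column scaling contributes a factor $\prod_j z_j^{2k-1}$ to the determinant. After reordering rows so that the exponents increase (a sign change $\varepsilon_k \in \{\pm 1\}$ depending only on $k$), the matrix $C$ becomes the $2k \times 2k$ Vandermonde matrix in the variables $z_j^2$, so
\[\det(C) = \varepsilon_k \prod_{1 \le j < l \le 2k}(z_l^2 - z_j^2).\]

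The key identity $z_l^2 - z_j^2 = e^{2it_l} - e^{2it_j} = 2i\, e^{i(t_l+t_j)} \sin(t_l - t_j)$ converts the Vandermonde product into sines times a phase:
\[\prod_{1 \le j < l \le 2k}(z_l^2 - z_j^2) = (2i)^{\binom{2k}{2}} \, e^{i \sum_{j<l}(t_l+t_j)} \prod_{1 \le j < l \le 2k} \sin(t_l - t_j).\]
Since each $t_i$ appears $2k-1$ times in the double sum, $\sum_{j<l}(t_l+t_j) = (2k-1)\sum_j t_j$, hence the phase $e^{i(2k-1)\sum_j t_j} = \prod_j z_j^{2k-1}$ exactly cancels the factor $\prod_j z_j^{2k-1}$ introduced by the column scaling.

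Combining these reductions gives
\[(-2i)^k \det(A) = \varepsilon_k (2i)^{\binom{2k}{2}} \prod_{1 \le j < l \le 2k} \sin(t_l - t_j),\]
so $\det(A) = \kappa \prod_{j<l} \sin(t_l - t_j)$ with $\kappa = \varepsilon_k (2i)^{\binom{2k}{2}} / (-2i)^k$, a nonzero real constant depending only on $k$ (one can check reality by evaluating at a specific configuration, e.g.\ $t_j = \tfrac{2\pi(j-1)}{2k}$ inscribing a regular $2k$-gon, where both sides are manifestly real and nonzero). The only real bookkeeping obstacle is tracking the sign from the final row-permutation $\varepsilon_k$, but since the statement only asserts the existence of a nonzero constant $\kappa$, this sign is absorbed without further analysis.
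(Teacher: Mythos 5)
Your proof is correct and follows essentially the same strategy as the paper's: both arguments convert the trigonometric entries to exponentials, use row/column operations (you phrase them as $2\times 2$ block transformations, the paper as explicit operations $C_j\mapsto C_j+C_{j+1}$ and $C_{j+1}\mapsto C_{j+1}-\tfrac12 C_j$) to reach a Vandermonde matrix in $e^{2it_j}$, and then apply the identity $e^{2it_l}-e^{2it_j}=2ie^{i(t_l+t_j)}\sin(t_l-t_j)$. The only cosmetic difference is that the paper works with $A^\intercal$ and does not stop to argue the reality of $\kappa$ separately, instead computing it explicitly as $\sign(\sigma)\,2^{2k(k-1)}$.
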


We would like to thank Harrison Chapman for the insights behind the proof of Lemma~\ref{lem:det}.
The main idea of the proof is to perform elementary row and column operations to $A$ to obtain a Vandermonde matrix.
In addition to the general case, the simpler case $k=2$ of this proof is written out in more detail in~\cite{BushMasters}.
The determinant of a related matrix is given in~\cite{gallier2003computing}.

A \emph{Vandermonde matrix} is an $n\times n$ matrix of the form 
\[V=\begin{pmatrix}
1 & a_1 & a_1^2&\cdots &a_1^{n-1}\\
1 & a_2 & a_2^2&\cdots &a_2^{n-1}\\
\vdots & \vdots & \vdots &\ddots &\vdots \\
1 & a_n & a_n^2&\cdots &a_n^{n-1}\\
\end{pmatrix}.\]
Its determinant is $\det(V)=\prod_{1\leq i< j\leq n}(a_j-a_i)$; see for example~\cite[Section~2.8.1]{press2007numerical}.

\begin{proof}[Proof of Lemma~\ref{lem:det}]

We will perform elementary row and column operations to $A$ to obtain a Vandermonde matrix.
Given $f\colon \R\to\mathbb{C}$, define the function $f\colon\R^{2k}\to\mathbb{C}^{2k}$ via
$f(\ut)= ( f(t_1), f(t_2), \ldots, f(t_{2k}) )^{\text{T}}$ for $\ut=(t_1,t_2,\ldots,t_{2k})^{\text{T}}$.
Since
\[A=\begin{pmatrix} 
\cos(t_1) & \cos(t_2)   & \hdots & \cos(t_{2k})  \\
\sin(t_1) & \sin(t_2)   & \hdots & \sin(t_{2k})  \\
\cos(3t_1) & \cos(3t_2) & \hdots & \cos(3t_{2k}) \\
\sin(3t_1) & \sin(3t_2) & \hdots & \sin(3t_{2k}) \\
\vdots & \vdots & \ddots & \vdots \\
\cos((2k-1)t_1) & \cos((2k-1)t_2) & \hdots & \cos((2k-1)t_{2k}) \\
\sin((2k-1)t_1) & \sin((2k-1)t_2) & \hdots & \sin((2k-1)t_{2k}) \\
\end{pmatrix},\]
we have
\begin{align*}
\det(A)&=\det\left(A^{\text{T}}\right) =\det\begin{pmatrix} \cos(\ut) &  \sin(\ut) & \cos(3\ut) &\sin(3\ut) &\cdots & \cos((2k-1)\ut)& \sin((2k-1)\ut)\end{pmatrix}\\
& =\det \begin{pmatrix} \frac{e^{i\ut}+e^{-i\ut}}{2} &  \frac{e^{i\ut}-e^{-i\ut}}{2i} &  \frac{e^{3i\ut}+e^{-3i\ut}}{2} & \frac{e^{3i\ut}-e^{-3i\ut}}{2i} & \cdots & \frac{e^{(2k-1)i\ut}+e^{-(2k-1)i\ut}}{2} & \frac{e^{(2k-1)i\ut}-e^{-(2k-1)i\ut}}{2i} \end{pmatrix} \\
&=\frac{1}{2^{2k}}(-i)^k \det \begin{pmatrix} e^{i\ut}+e^{-i\ut} &  e^{i\ut}-e^{-i\ut} & \cdots &  e^{(2k-1)i\ut}+e^{-(2k-1)i\ut} & e^{(2k-1)i\ut}-e^{-(2k-1)i\ut}  \end{pmatrix}.
\end{align*}
Next, let $C_j$ denote the $j$-th column of the above matrix.
For $j=1,3,\dots,2k-1$, perform the column operations $C_j\mapsto C_j+C_{j+1}$, and then after each $C_j$ has been updated, perform the column operations $C_{j+1}\mapsto C_{j+1}-\frac{1}{2}C_j$.
It follows that
\begin{align*} \det(A)& =\frac{1}{2^{2k}}(-i)^k  \det \begin{pmatrix} 2e^{i\ut} & -e^{-i\ut} & 2e^{3 i\ut} & -e^{-3i\ut} & \cdots & 2e^{(2k-1)i\ut} & -e^{-(2k-1)i \ut} \end{pmatrix}\\
& =\frac{i^k}{2^{k}} \det \begin{pmatrix} e^{i\ut} & e^{-i\ut} & e^{3 i\ut} & e^{-3i\ut}& \cdots & e^{(2k-1)i\ut} & e^{-(2k-1)i \ut}  \end{pmatrix}
\end{align*}
by factoring out column multiples.
Letting $\omega=e^{-(2k-1)i (t_1+t_2+\dots+t_{2k})}$, we may factor $e^{-(2k-1)i t_j}$ from row $j$ to obtain 
\begin{align*} \det(A)&  = \frac{i^k}{2^{k}}\omega \det \begin{pmatrix} e^{((2k-1)+1)i\ut} & e^{((2k-1)-1)i\ut} & \cdots &  e^{((2k-1)+(2k-1))i\ut} &  e^{((2k-1)-(2k-1))i\ut} \end{pmatrix}\\
&=\frac{i^k}{2^{k}}\omega \det \begin{pmatrix}  e^{2k i\ut} &  e^{(2k-2)i\ut} & e^{(2k+2)i\ut} &  e^{(2k-4)i\ut} & \cdots &  e^{2(2k-1)i\ut} & \underline{1} \end{pmatrix},
\end{align*}
where $\underline{1}$ is the column of all $1$'s.
After re-ordering rows by a permutation $\sigma$ and taking the determinant of the resulting Vandermonde matrix, we have
\[ \det(A)  = \sign(\sigma)\frac{i^k}{2^{k}}\omega \det \begin{pmatrix} \underline{1} & e^{2i\ut} & e^{4i\ut} & \cdots & e^{(2(2k-1))i\ut}\end{pmatrix}
= \sign(\sigma)\frac{i^k}{2^{k}}\omega \prod_{1\leq j <l \leq 2k}\left(e^{2it_l}-e^{2it_j}\right). \]
Finally, note
$\omega=\prod_{1\le j<l\le 2k}e^{-i(t_l+t_j)}$, and multiply each term $\left(e^{2it_l}-e^{2it_j}\right)$ above by the factor $e^{-i(t_l+t_j)}$ extracted from $\omega$ to obtain
\begin{align*}
\det(A)  &  = \sign(\sigma)\frac{i^k}{2^{k}} \prod_{1\leq j<l \leq 2k}\left(e^{i(t_l-t_j)}-e^{-i(t_l-t_j)}\right)\\
&  = \sign(\sigma)\frac{i^k}{2^{k}} \prod_{1\leq j<l \leq 2k}2i \sin(t_l-t_j)= 
\kappa \prod_{1\leq j<l\leq 2k}\sin(t_l-t_j)
\end{align*}
where
$\kappa=\sign(\sigma)\tfrac{i^k}{2^{k}}(2i)^{2k^2-k}=\sign(\sigma) i^{2k^2} 2^{2k(k-1)}=\sign(\sigma) 2^{2k(k-1)}$.
\end{proof}

The following corollary is immediate.

\begin{corollary}\label{cor:det}
For $0\leq i\leq 2k$, let $M_{2k,i}$ denote the $2k\times 2k$ matrix obtained by removing the $i$-th column of $M_{2k}$.
Then
\[\det(M_{2k,i})=\kappa \prod_{\substack{0\leq j<l\leq 2k\\ j,l\neq i}}\sin(t_l-t_j),\]
for some nonzero constant $\kappa$ depending only on $k$.
\end{corollary}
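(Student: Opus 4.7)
The plan is to deduce the corollary directly from Lemma~\ref{lem:det} by a straightforward relabeling of indices. First I would note that, by definition, $M_{2k,i}$ is the $2k\times 2k$ matrix whose columns are the vectors $\sm_{2k}(t_0), \dots, \sm_{2k}(t_{i-1}), \sm_{2k}(t_{i+1}), \dots, \sm_{2k}(t_{2k})$, listed in the natural order inherited from $M_{2k}$. In particular, $M_{2k,i}$ is structurally identical to the matrix $A$ appearing in Lemma~\ref{lem:det}; only the names of the $2k$ parameters differ.

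Next, I would introduce the order-preserving relabeling $s_p = t_{j_p}$, where $j_1 < j_2 < \dots < j_{2k}$ is the increasing enumeration of $\{0, 1, \dots, 2k\} \setminus \{i\}$. Under this relabeling, $M_{2k,i}$ is literally the matrix $A$ from Lemma~\ref{lem:det} applied to the parameters $s_1, \dots, s_{2k}$, so that lemma yields
\[\det(M_{2k,i}) = \kappa \prod_{1 \leq p < q \leq 2k} \sin(s_q - s_p),\]
where $\kappa$ is a nonzero constant depending only on $k$ (and, crucially, independent of the parameters themselves).

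Finally I would unwind the relabeling: the condition $p < q$ on the new indices is, by construction, equivalent to $j_p < j_q$ on the original indices, so the displayed product equals $\prod_{\substack{0 \leq j < l \leq 2k \\ j, l \neq i}} \sin(t_l - t_j)$. This gives the claimed formula with the same constant $\kappa$, which in particular does not depend on $i$. There is no real obstacle to overcome here; the only point requiring (minor) attention is verifying that an order-preserving relabeling does not introduce a sign change into any factor $\sin(s_q - s_p)$, and this is immediate. This is precisely the sense in which the author calls the corollary ``immediate.''
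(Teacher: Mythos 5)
Your proposal is correct and matches the paper's intent exactly: the paper dismisses the corollary as ``immediate,'' and your argument is precisely the relabeling that makes that word justified, including the observation that $\kappa$ from Lemma~\ref{lem:det} depends only on $k$ and that an order-preserving relabeling preserves the sign of each factor $\sin(t_l - t_j)$.
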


\begin{lemma}\label{lem:nullspace} If no two points $t_0,t_1,\ldots,t_{2k}\in S^1$ are equal or antipodal, then the nullspace of $M_{2k}$ is one-dimensional and is spanned by $\vec{\lambda}=(\lambda_0, \lambda_1,\ldots ,\lambda_{2k})^\intercal $, where 
\[\lambda_i=(-1)^i\prod_{\substack{0\leq j<l\leq 2k\\ j,l\neq i}}\sin(t_l-t_j).
\]
\end{lemma}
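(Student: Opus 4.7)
The plan is to combine Corollary~\ref{cor:det} with a standard cofactor/Laplace-expansion trick for $n \times (n{+}1)$ matrices. Since $M_{2k}$ has $2k$ rows and $2k{+}1$ columns, its nullspace has dimension at least one, so the task splits into two parts: first, show that the nullspace is exactly one-dimensional; second, identify an explicit spanning vector and match it with the stated formula.

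For dimensionality, I would argue that the hypothesis forces some $2k \times 2k$ minor of $M_{2k}$ to be nonzero. Indeed, by Corollary~\ref{cor:det}, $\det(M_{2k,i}) = \kappa \prod_{\substack{0 \le j < l \le 2k \\ j,l \ne i}} \sin(t_l - t_j)$, and by hypothesis no two $t_j, t_l$ are equal or antipodal on $S^1$, so $t_l - t_j \notin \pi \mathbb{Z}$, hence $\sin(t_l - t_j) \ne 0$ for every $j \ne l$. Thus $\det(M_{2k,i}) \ne 0$ (for every $i$, in fact), so $M_{2k}$ has rank $2k$ and its nullspace is exactly one-dimensional.

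For the explicit generator, I would use the following well-known cofactor construction. Define $\mu_i = (-1)^i \det(M_{2k,i})$ for $0 \le i \le 2k$. To see that the vector $\vec{\mu} = (\mu_0, \ldots, \mu_{2k})^\intercal$ lies in the kernel of $M_{2k}$, pick any row $r$ of $M_{2k}$ and form the $(2k{+}1) \times (2k{+}1)$ matrix obtained by prepending this row to $M_{2k}$. The resulting matrix has two equal rows, so its determinant is zero; expanding along the prepended row yields exactly $\sum_{i=0}^{2k} (-1)^i (M_{2k})_{r,i} \det(M_{2k,i}) = (M_{2k} \vec{\mu})_r = 0$. Doing this for each row shows $M_{2k}\vec{\mu} = \vec{0}$.

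Substituting the formula from Corollary~\ref{cor:det} gives
\[
\mu_i = (-1)^i \kappa \prod_{\substack{0\le j<l\le 2k \\ j,l\ne i}} \sin(t_l - t_j),
\]
and since $\kappa$ depends only on $k$ (not on $i$), dividing the spanning vector $\vec{\mu}$ by $\kappa$ produces the vector $\vec{\lambda}$ of the lemma, which is therefore also a basis for the nullspace. There is no real obstacle here beyond bookkeeping; the only point that requires the full hypothesis (rather than merely the points being distinct) is the non-vanishing $\sin(t_l - t_j) \ne 0$, which uses the "no antipodal pair" condition and guarantees both that the nullity is exactly $1$ and that the proposed $\vec{\lambda}$ is a nonzero vector.
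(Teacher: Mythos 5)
Your proof is correct and follows essentially the same route as the paper: nullity exactly one via Corollary~\ref{cor:det} and the non-vanishing of $\sin(t_l-t_j)$, and membership of $\vec{\lambda}$ in the kernel via the repeated-row/cofactor-expansion argument. The only difference is cosmetic---the paper unfolds the cofactor expansion coordinate by coordinate (the $C_n$ and $S_n$ computations), whereas you state the same identity once as the Laplace expansion of the bordered $(2k{+}1)\times(2k{+}1)$ matrix along its prepended row.
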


\begin{proof}
Because $M_{2k}$ has $2k$ rows and $2k+1$ columns, it has nullity at least one.
Further, by Corollary~\ref{cor:det}, observe that $M_{2k,0}$ is invertible if and only if no two points $t_l,t_j\in S^1$ are equal or antipodal.
Hence, $M_{2k}$ contains $2k$ linearly independent columns and has nullity exactly one.

Next, we prove $\vec{\lambda}$ is contained in the nullspace of $M_{2k}$.
To ease notation, write 
\[M_{2k}\vec{\lambda}=
\begin{pmatrix} C_1 &
S_1&
C_3&
S_3&
\cdots &
C_{2k-1}&
S_{2k-1}
\end{pmatrix}^{\text{T}}.\]
Note $\lambda_i=(-1)^i\frac{1}{\kappa}\det(M_{2k,i})$, and hence for $n=1,3,5,\dots,2k-1$ we have
\[
C_n=\sum_{i=0}^{2k} \cos(n t_i)\lambda_i 
= \frac{1}{\kappa} \sum_{i=0}^{2k}(-1)^i \cos(n t_i)\det(M_{2k,i}).
\]
Therefore, $C_n$ is equal to $\frac{1}{\kappa}$ times the determinant of the matrix
\[\begin{pmatrix}
\cos(n t_0) & \cos(n t_1)   & \hdots & \cos(n t_{2k})  \\
\cos(t_0) & \cos(t_1)   & \hdots & \cos(t_{2k})  \\
\sin(t_0) & \sin(t_1)   & \hdots & \sin(t_{2k})  \\
\cos(3t_0) & \cos(3t_1) & \hdots & \cos(3t_{2k}) \\
\sin(3t_0) & \sin(3t_1) & \hdots & \sin(3t_{2k}) \\
\vdots & \vdots & \ddots & \vdots \\
\cos((2k-1)t_0) & \cos((2k-1)t_1) & \hdots & \cos((2k-1)t_{2k}) \\
\sin((2k-1)t_0) & \sin((2k-1)t_1) & \hdots & \sin((2k-1)t_{2k})
\end{pmatrix}.\]
Since $n=2j-1$ for some $1 \le j \le k$, the first row of this matrix is equal to one of the other rows.
Hence, the matrix is singular, giving that $C_n=0$.

Similarly, it follows that $S_n$ is equal to $\frac{1}{\kappa}$ times the determinant of the same matrix, except with the first row replaced by $(\sin(n t_0), \sin(n t_1), \ldots, \sin(n t_{2k}))$.
For the same reasons as before, it follows that $S_n=0$.
\end{proof}

For convenience, we rescale $\vec{\lambda}$ by $\gamma:=\prod_{0\leq j < l \leq 2k} \frac{1}{\sin(t_l-t_j)}$ (which is well-defined for $t_1,\ldots,t_{2k}$ distinct) to obtain
\[\gamma \vec{\lambda}=\left(\frac{1}{\alpha_0(t_0,\dots,t_{2k})},\dots,\frac{1}{\alpha_{2k}(t_0,\dots,t_{2k})}\right)^\intercal ,
\quad\text{where}\quad
\alpha_i(t_0,\dots,t_{2k})=\prod_{\substack{0\leq j\leq 2k\\ j\neq i}} \sin(t_j-t_i).\]

Recall that entries of $\vec{\lambda}$ correspond to coefficients in the linear combination $\vec{0}=\sum_{i=0}^{2k} \lambda_i \sm_{2k}(t_i)$.
In particular, we are concerned only with \emph{convex} linear combinations.
Hence, after normalizing $\vec{\lambda}$ (and potentially rescaling by $-1$), it is necessary that each entry $\lambda_i$ is positive.
In other words, the origin may be contained in the convex hull of $\{\sm_{2k}(t_0),\dots,\sm_{2k}(t_{2k})\}$ only in the case that the terms $\alpha_i(t_0,\dots,t_{2k})$ share the same sign.
We next relate the sign of each term $\alpha_i(t_0,\dots,t_{2k})$ to the configuration of points $t_0,\dots,t_{2k}\in S^1$.

\begin{lemma}\label{lem:alphasign}
Let $t_0,\dots,t_{2k}\in S^1$, with no two points equal or antipodal.
Then, the numbers $\alpha_i(t_0,\dots,t_{2k})$ have the same sign for all $0\leq i\leq 2k$ if and only if $\chi(t_i):=\#\{t_j\mid t_j\in (t_i+\pi, t_i)_{S^1}\}=k$ for all $i$.
\end{lemma}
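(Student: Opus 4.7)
The plan is to translate the sign question into a parity question about $\chi(t_i)$ and then use a simple flow argument around the circle.

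Since no two $t_i$ are equal or antipodal, each factor $\sin(t_j - t_i)$ is nonzero, and its sign is negative precisely when $t_j - t_i \in (\pi, 2\pi) \pmod{2\pi}$, i.e.\ when $t_j \in (t_i + \pi, t_i)_{S^1}$. Therefore
\[
\sign\bigl(\alpha_i(t_0,\dots,t_{2k})\bigr) = (-1)^{\chi(t_i)},
\]
so the $\alpha_i$ share a common sign if and only if all $\chi(t_i)$ have the same parity. The backward direction is then immediate: if $\chi(t_i)=k$ for all $i$, every $\alpha_i$ has sign $(-1)^k$.

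For the forward direction, relabel the points as $s_0,s_1,\dots,s_{2k}$ in counterclockwise order on $S^1$, and extend $\chi$ to $S^1\setminus\{s_0,\dots,s_{2k},s_0+\pi,\dots,s_{2k}+\pi\}$ by the same formula. A direct check using the open-arc convention shows that, as $t$ moves counterclockwise, $\chi(t)$ jumps by $+1$ immediately after passing each $s_j$ and by $-1$ immediately after passing each antipode $s_j+\pi$, while $\chi(s_i)$ itself agrees with the left-limit at $s_i$. Writing $a_i$ for the number of antipodes $s_j+\pi$ lying in the open arc $(s_i,s_{i+1})_{S^1}$ (indices mod $2k+1$), summing the jumps along this arc gives
\[
\chi(s_{i+1}) - \chi(s_i) = 1 - a_i.
\]

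Under the same-parity hypothesis, every $1-a_i$ is even, so each $a_i$ is an odd nonnegative integer and hence $a_i\ge 1$. But the $2k+1$ antipodes partition into the $2k+1$ arcs $(s_i,s_{i+1})_{S^1}$, so $\sum_i a_i = 2k+1$, which forces $a_i=1$ for all $i$. Consequently $\chi(s_{i+1})=\chi(s_i)$, so $\chi$ is constant on $\{s_0,\dots,s_{2k}\}$. Its value is then pinned down by a double-counting identity: the sum $\sum_{i=0}^{2k}\chi(s_i)$ equals the number of ordered pairs $(i,j)$ with $s_j\in(s_i+\pi,s_i)_{S^1}$, and for each of the $\binom{2k+1}{2}$ unordered pairs $\{i,j\}$ exactly one ordering contributes. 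Hence $\sum_i \chi(s_i)=k(2k+1)$, and constancy forces $\chi(s_i)=k$.

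The main obstacle is the bookkeeping in the jump analysis: one must carefully verify the direction of each jump and identify $\chi(s_i)$ with the appropriate one-sided limit, both of which follow from the convention that the arc $(a,b)_{S^1}$ is open. Once that is pinned down, the remaining step combining oddness of the $a_i$ with the pigeonhole sum $\sum a_i = 2k+1$ is short.
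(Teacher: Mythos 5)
Your proof is correct.

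Both you and the paper begin from the observation that $\sign(\alpha_i) = (-1)^{\chi(t_i)}$, so the question reduces to showing same parity of all $\chi(t_i)$ forces $\chi(t_i) = k$; and both use the double-counting identity $\sum_i \chi(t_i) = \binom{2k+1}{2} = k(2k+1)$ to pin down the common value once constancy is established. The constancy step, however, is where you diverge. The paper proves the one-sided inequality $\chi(t_{i+1}) - \chi(t_i) \le 1$ under the hypothesis that the points are not all contained in a semicircle, then writes $\chi(t_i) = k + 2n_i$, uses $\sum n_i = 0$, and combines the inequality $n_{i+1} \le n_i$ with the telescoping sum $\sum (n_{i+1} - n_i) = 0$; it also needs an explicit aside ruling out the all-in-a-semicircle case. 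Your version instead extracts the \emph{exact} jump formula $\chi(s_{i+1}) - \chi(s_i) = 1 - a_i$, where $a_i$ counts the antipodes $s_j + \pi$ in the open arc $(s_i, s_{i+1})_{S^1}$, and then finishes with a clean pigeonhole: same parity makes each $a_i$ odd, hence $\ge 1$, and since the $2k+1$ antipodes are partitioned among the $2k+1$ arcs, $\sum a_i = 2k+1$ forces every $a_i = 1$. This buys you two things over the paper's route: it avoids introducing the auxiliary integers $n_i$, and it dispenses with the separate semicircle case distinction automatically (if the points lay in a semicircle, some arc would have $a_i = 0$, which is even, already contradicting the parity hypothesis). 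The quantity $a_i$ does appear implicitly in the paper's proof of its inequality (ii), but the paper only uses $a_i \ge 0$; you exploit the full information that $\sum a_i = 2k+1$.
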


\begin{proof}
Throughout, we assume that the points $t_0,\dots,t_{2k}\in S^1$ are distinct, with no two points antipodal, and furthermore that they are ordered by index with a counterclockwise orientation.
Observe that $\sign(\alpha_i(t_0,\dots,t_{2k}))=(-1)^{\chi(t_i)}$.

We first prove two preliminary properties.
\begin{itemize}
\item[(i)] $\sum_{i=0}^{2k}\chi(t_i)=k(2k+1)$.
\item[(ii)] If $t_0,\dots,t_{2k}$ are not all contained in a semicircle, then $1\geq \chi(t_{i+1})-\chi(t_i)$ for $0\leq i \leq 2k$, where we set $t_{2k+1}=t_0$.
\end{itemize}

For (i), note that since no two points are equal or antipodal, we have that $t_j\in (t_i+\pi, t_i)_{S^1}$ if and only if $t_i\notin (t_j+\pi, t_j)_{S^1}$.
Therefore
$\sum_{i=0}^{2k}\chi(t_i)=\binom{2k+1}{2}=k(2k+1)$.

For (ii), observe that the open arc $(t_{i+1}+\pi,t_i)_{S^1}$ contains exactly $\chi(t_{i+1})-1$ points.
Indeed, $(t_{i+1}+\pi,t_i)_{S^1}$ contains exactly $\chi(t_{i+1})-1$ points for all $i$ if and only if $t_i\in (t_{i+1}+\pi,t_{i+1})_{S^1}$ for all $i$, which is true if and only if the points are not contained in a semicircle.
Hence, $(t_i+\pi,t_{i+1}+\pi)_{S^1}$ must contain exactly $\chi(t_i)-(\chi(t_{i+1})-1)$ points.
Because this number is non-negative, it follows that $1\geq \chi(t_{i+1})-\chi(t_i)$.

We now prove Lemma~\ref{lem:alphasign}.
In the case that $\chi(t_i)=k$ for all $i$, we see that the numbers $\alpha_i(t_0,\dots,t_{2k})$ are all positive or are all negative.

Conversely, suppose the numbers $\alpha_i(t_0,\dots,t_{2k})$ have the same sign.
Since $\sign(\alpha_i(t_0,\dots,t_{2k}))=(-1)^{\chi(t_i)}$, the numbers $\chi(t_i)$ have the same parity.
Further, in the case $k$ is odd (resp.\ even), (i) implies each $\chi(t_i)$ is odd (resp.\ even).
Therefore, in either case, we may write $\chi(t_i)=k+2n_i$ for some integer $n_i\in\Z$.
Note that (i) implies 
\[k(2k+1)=\sum_{i=0}^{2k}\chi(t_i)=\sum_{i=0}^{2k}(k+2n_i)=k(2k+1)+2\sum_{i=0}^{2k}n_i, \]
giving $\sum_{i=0}^{2k}n_i=0$.
Therefore, it is sufficient to prove that $n_i=n_j$ for all $i,j$.
Toward that end, define $t_{2k+1}=t_0$ and $n_{2k+1}=n_0$, and observe
\begin{align*}
0=\sum_{i=0}^{2k}n_{i+1} =\sum_{i=0}^{2k}(n_{i+1}+(-n_{i}+n_{i}))
=\sum_{i=0}^{2k}((n_{i+1}-n_{i})+n_{i})
=\sum_{i=0}^{2k}(n_{i+1}-n_{i})+\sum_{i=0}^{2k}n_{i}
=\sum_{i=0}^{2k}(n_{i+1}-n_{i}).
\end{align*}
It cannot be the case that all of the points $t_i$ are contained in a semicircle, since then $\chi(t_i)$ would obtain all of the values $0,1,\dots, 2k$, contradicting the fact that these values have the same parity.
Therefore, we may apply (ii) to obtain 
\[1\geq (k+2n_{i+1})-(k+2n_i)=2(n_{i+1}-n_i),\]
which implies $0\geq n_{i+1}-n_i$ for all $i$.
Together with $\sum_{i=0}^{2k}(n_{i+1}-n_i)=0$, this gives $n_{i+1}=n_i$ for all $i$.
\end{proof}

We are now prepared to prove Theorem~\ref{thm:miss-origin}.

\begin{proof}[Proof of Theorem~\ref{thm:miss-origin}] 
Let distinct $t_0,\dots, t_{2k}\in S^1$ be given in counterclockwise order, and define $D=\diam(\{t_0,\dots, t_{2k}\})$.
We claim that if $\chi(t_i):=\#\{t_j\mid t_j\in (t_i+\pi, t_i)_{S^1}\}=k$ for all $i$, then $D\ge \frac{2\pi k}{2k+1}$. 
Indeed, define $t_{2k+1}=t_0,$ and let $\ell_i$ be the length of $(t_{i},t_{i+1})_{S^1}$ for all $i$. 
Because $\chi(t_i)=k=\chi(t_{i+1})$, it follows that there exists exactly one point $t_j$ in the arc $(t_i+\pi,t_{i+1}+\pi)_{S^1}.$
Further, because the function $f\colon(t_i+\pi,t_{i+1}+\pi)_{S^1}\to \R$ defined by $f(t)=\max\{d_{S^1}(t,t_i),d_{S^1}(t,t_{i+1})\}$ is minimized at the midpoint of $(t_i+\pi,t_{i+1}+\pi)_{S^1}$, it follows that $D\ge\pi-\frac{\ell_i}{2}$.
On the other hand, because there are $2k+1$ consecutive pairs of points $t_i,t_{i+1}$, we must have $\ell_j\leq \frac{2\pi}{2k+1}$ for some $0\leq j \leq 2k$. 
Hence $D\ge\pi-\frac{\pi}{2k+1}=\frac{2\pi k}{2k+1}$.

Therefore, if $\diam(\{t_0,\dots, t_{2k}\})<\mathcal{C}= \frac{2\pi k}{2k+1}$, then $\chi(t_i)\neq k$ for some $0\leq i \leq 2k$.
Hence Lemmas~\ref{lem:nullspace} and~\ref{lem:alphasign} imply that there do not exist positive scalars $\lambda_i$ with $\vec{0}=\sum_{i=0}^{2k} \lambda_i \sm_{2k}(t_i)$.

To see that this bound is sharp, let $t_i\in S^1$ denote the vertices of a regular inscribed $(2k+1)$-gon.
Note that $\vec{0}=\sum_{i=0}^{2k}\frac{1}{2k+1}\sm_{2k}(t_i)$ in this case.
\end{proof}

\section{A connection between metric thickenings and orbitopes}
\label{sec:results}

We now connect the Vietoris--Rips metric thickenings of the circle to the Barvinok--Novik orbitopes.
Indeed, we conjecture (Conjecture~\ref{conj:homotopy-S1}) that for $\frac{2\pi(k-1)}{2k-1}\le r<\frac{2\pi k}{2k+1}$, the metric thickening $\vrm{S^1}{r}$ is homotopy equivalent to the boundary $\partial \cB_{2k}$ of the Barvinok--Novik orbitope, i.e.\ to the odd-dimensional sphere $S^{2k-1}$.
We are able to show the partial result that the $(2k-1)$-dimensional homology, cohomology, and homotopy groups of $\vrm{S^1}{r}$ are nontrivial; we only obtain the full homotopy type for $r\le\frac{2\pi}{3}$.

\begin{figure}[h]
\centering
\def\svgwidth{0.9\linewidth}
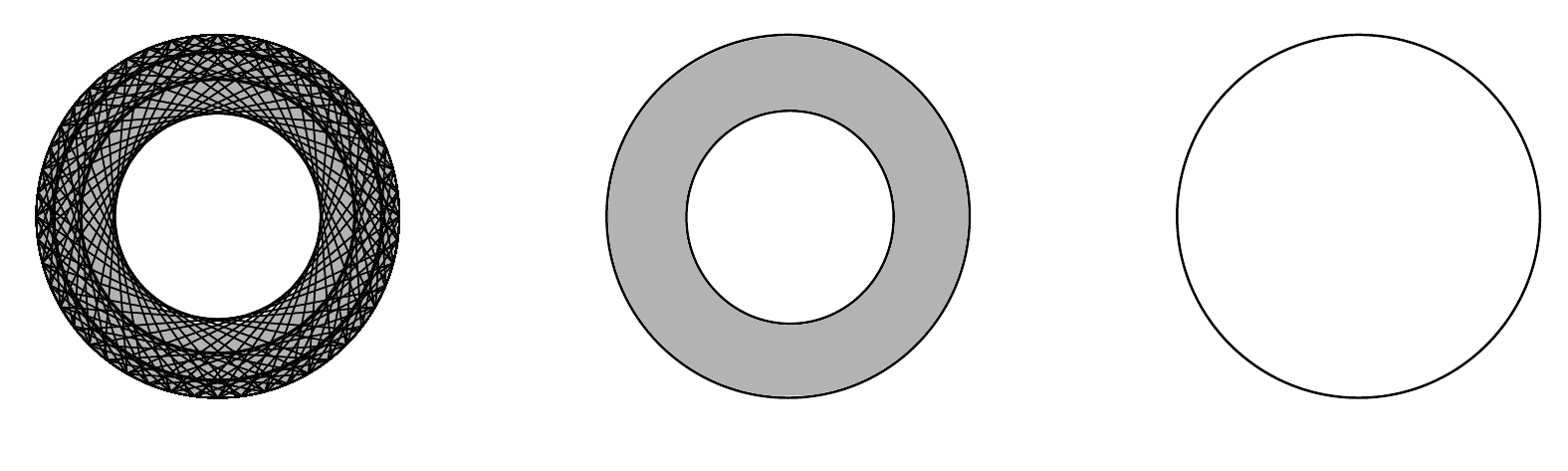
\caption{The composition of maps $\vrm{S^1}{r}\xrightarrow{\sm_{2k}}\R^{2k}\setminus\{\vec{0}\}\xrightarrow{\,p\,}\partial \cB_{2k}$, drawn in the case $k=1$.}
\label{fig:BarvinokNovikProjection}
\end{figure}

Towards Conjecture~\ref{conj:homotopy-S1}, we build the following sequence of maps: 
\[\vrm{S^1}{r}\xrightarrow{\sm_{2k}}\R^{2k}\setminus\{\vec{0}\}\xrightarrow{\,p\,}\partial \cB_{2k}\xrightarrow{\,\iota\,} \vrm{S^1}{r}.\]
This construction will proceed as outlined below.

\begin{enumerate}
\item Section~\ref{ssec:composition}: We define the radial projection map $p\colon\R^{2k}\setminus \{\vec{0}\}\to \partial\cB_{2k}$.
We extend the domain of $\sm_{2k}$ to $\vrm{S^1}{r}$, and note that the composition $p\circ\sm_{2k}$ is well-defined.
\item Section~\ref{ssec:inclusion}: We define the inclusion $\iota\colon \partial\cB_{2k} \to \vrm{S^1}{r}$.
Since $(p\circ \sm_{2k})\circ\iota=\mathrm{id}_{\partial\cB_{2k}}$, we obtain that the $(2k-1)$-dimensional homology, cohomology, and homotopy groups of $\vrm{S^1}{r}$ are nontrivial.
\item Section~\ref{ssec:inverses}: We prove that $p\circ \sm_{2k}$ and $\iota$ are homotopy inverses; this is the step that we can currently only complete for $r\le\frac{2\pi}{3}$ (and hence $k\le 2$).
\end{enumerate}

When $r<\frac{2\pi}{3}$ and $k=1$, this proof is quite easy to interpret.
The map $\sm_2$ maps the space $\vrm{S^1}{r}$ to an annulus missing the origin in $\R^2$ (see Figure~\ref{fig:BarvinokNovikProjection}).
Map $p$ radially projects the annulus to its outer circle, and map $\iota$ includes the circle back into $\vrm{S^1}{r}$.

As a result of step (3), we obtain $\vrm{S^1}{\frac{2\pi}{3}}\simeq S^3$.
Note that $\vrm{S^1}{\frac{2\pi}{3}}\not\simeq\vr{S^1}{\frac{2\pi}{3}}\simeq\bigvee^{\mathfrak{c}}S^2$.
We think of the metric thickening $\vrm{S^1}{\frac{2\pi}{3}}$ as having the ``right" homotopy type, whereas the wild homotopy type of the simplicial complex $\vr{S^1}{\frac{2\pi}{3}}$ is an artifact of it being equipped with the ``wrong" topology\footnote{The inclusion $S^1\hookrightarrow \vr{S^1}{\frac{2\pi}{3}}$ is not continuous.
As further evidence for being more interested in the homotopy type $S^3$ rather than $\bigvee^\mathfrak{c} S^2$, note that for all $0<\varepsilon<\frac{2\pi}{15}$ we have $\vr{S^1}{\frac{2\pi}{3}+\varepsilon}\simeq S^3$.}.

There is an analogous relationship between the \v{C}ech thickenings of the circle and the Carath\'{e}odory orbitopes, i.e.\ the convex hull of the curve $(\cos t, \sin t, \cos 2t, \sin 2t, \ldots, \cos kt, \sin kt\bigr)$~\cite{sanyal2011orbitopes}.
We do not detail that connection here, although some connections between \v{C}ech complexes of finite points on the circle and cyclic polytopes (convex hulls of finite sets of points from this trigonometric moment curve) are given in~\cite{AAFPP-J}.

\subsection{Map from the Vietoris--Rips thickening to the Barvinok--Novik orbitope}\label{ssec:composition}

We first define the radial projection map $p\colon\R^{2k}\setminus \{\vec{0}\}\to \partial\cB_{2k} \simeq S^{2k-1} $.
As $\cB_{2k}$ is a convex body containing the origin in its interior, each ray emanating from the origin intersects $\partial\cB_{2k}$ exactly once.
Hence, $p$ is well-defined.

We extend $\sm_{2k}\colon S^1\to\R^{2k}$ to $\sm_{2k}\colon\vrm{S^1}{r}\to\R^{2k}$ by declaring
$\sm_{2k}\left(\sum_i\lambda_i \delta_{t_i}\right) = \sum_i \lambda_i \sm_{2k}(t_i)$.
Here the sum on the left-hand side defines a measure as a convex sum of Dirac delta functions at the points $t_i\in S^1$ (of diameter at most $r$), whereas the sum on the right-hand side is a sum of vectors in $\R^{2k}$.
Because $\sm_{2k}$ restricted to $S^1$ is continuous and bounded, Lemma 5.2 of~\cite{AAF} proves that this extension to all of $\vrm{S^1}{r}$ is continuous.

Finally, suppose $r < \frac{2\pi k}{2k+1}$.
Then, Theorem~\ref{thm:miss-origin} implies that the origin $\vec{0}\in \R^{2k}$ is not in the image of the map $\sm_{2k}\colon\vrm{S^1}{r}\to\R^{2k}$, and hence the composition $p\circ \sm_{2k}\colon \vrm{S^1}{r}\to \partial\cB_{2k}$ from the Vietoris--Rips thickening to the boundary of the Barvinok--Novik orbitope is well-defined.

\subsection{Inclusion from the Barvinok--Novik orbitope boundary to the Vietoris--Rips thickening}\label{ssec:inclusion}

For $r\ge\frac{2\pi(k-1)}{2k-1}$, we define the map $\iota\colon \partial\cB_{2k} \to \vrm{S^1}{r}$ as follows.
Given a point $\sum_i\lambda_i\sm_{2k}(t_i)\in \partial\cB_{2k}$ with $\lambda_i>0$ for all $i$, let $\iota\left(\sum_i\lambda_i\sm_{2k}(t_i)\right)=\sum_i\lambda_i \delta_{t_i}$.
Recall that Corollary~\ref{cor:inclusion} states every face of $\cB_{2k}$ is a simplex whose diameter in $S^1$ (not in $\R^{2k}$) is at most $\frac{2\pi (k-1)}{2k-1}$, and hence the image of $\iota$ indeed lands in $\vrm{S^1}{r}$.
We now prove that $\iota$ is continuous.

\begin{lemma}\label{lem:iota-cont}
Let $r\ge\frac{2\pi(k-1)}{2k-1}$.
The map $\iota\colon \partial\cB_{2k} \to \vrm{S^1}{r}$ is continuous.
\end{lemma}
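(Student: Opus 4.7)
The plan is to prove sequential continuity of $\iota$: given $x_n \to x$ in $\partial \cB_{2k}$, I will show $\iota(x_n) \to \iota(x)$ in the $1$-Wasserstein metric. First, using that every face of $\cB_{2k}$ is a simplex of dimension at most $2k-1$ (by Sinn) and that the symmetric moment curve $\sm_{2k}$ is injective on $S^1$ (the first two coordinates already separate points), each $x_n$ has a \emph{unique} expression $x_n = \sum_{j=1}^{N_n} \mu^{(n)}_j \sm_{2k}(s^{(n)}_j)$ as a convex combination of the $N_n \le 2k$ vertices of its carrying face; this uniqueness is exactly what makes $\iota(x_n) = \sum_j \mu^{(n)}_j \delta_{s^{(n)}_j}$ well-defined, and Corollary~\ref{cor:inclusion} ensures it lies in $\vrm{S^1}{r}$.

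Next, I carry out a compactness/subsequence argument. Padding each representation with zero-weight summands so that it has exactly $2k$ indexed terms, the parameters $(s^{(n)}_j, \mu^{(n)}_j)_{j=1}^{2k}$ take values in the compact space $(S^1)^{2k} \times \Delta^{2k-1}$. Along a subsequence, $s^{(n)}_j \to s^\infty_j$ and $\mu^{(n)}_j \to \mu^\infty_j$, and continuity of $\sm_{2k}$ yields
\[ x = \lim_{n\to\infty} x_n = \sum_{j=1}^{2k} \mu^\infty_j \sm_{2k}(s^\infty_j). \]

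The crux---and the main obstacle---is to identify $\nu^\infty := \sum_j \mu^\infty_j \delta_{s^\infty_j}$ with $\iota(x)$. Let $F$ be the unique proper face of $\cB_{2k}$ whose relative interior contains $x$, and let $J = \{j : \mu^\infty_j > 0\}$. Since $x$ lies in the relative interior of $\conv\{\sm_{2k}(s^\infty_j) : j \in J\} \subseteq \cB_{2k}$ and $x \in F$, the defining property of a face of a convex set forces $\sm_{2k}(s^\infty_j) \in F$ for every $j \in J$. Because $F$ is a simplex, the $\sm_{2k}(s^\infty_j)$ are vertices of $F$, and uniqueness of barycentric coordinates on $F$ identifies $\nu^\infty$ with $\iota(x)$ after consolidating any repeated atoms.

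To finish, I would note that the measure-forming map $(s_j, \mu_j)_j \mapsto \sum_j \mu_j \delta_{s_j}$ into $(\vrm{S^1}{r}, W_1)$ is continuous---moving a support point by $\varepsilon$ costs at most $\varepsilon$ in transport, and shifting a mass of $\delta$ between two atoms costs at most $\pi\delta$---so the above subsequence analysis yields $\iota(x_n) \to \iota(x)$. Since every subsequence of $(\iota(x_n))$ admits a further subsequence converging to $\iota(x)$, the full sequence converges, completing the proof.
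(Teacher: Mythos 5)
Your proof is correct but follows a genuinely different route from the paper's. The paper observes that the restriction $(p\circ \sm_{2k})|_{\iota(\partial\cB_{2k})}$ is a continuous bijection from the compact space $\iota(\partial\cB_{2k})$ (a closed subset of the compact space $\sP(S^1)$) to the Hausdorff space $\partial\cB_{2k}$, hence a homeomorphism, and then reads off continuity of $\iota$ as continuity of the inverse. You instead prove sequential continuity directly: you lift a convergent sequence $x_n \to x$ in $\partial\cB_{2k}$ to the compact parameter space $(S^1)^{2k}\times \Delta^{2k-1}$ of padded barycentric data, extract a convergent subsequence, and then use the defining property of a face (together with Sinn's result that each face is a simplex, plus injectivity of $\sm_{2k}$) to pin the limiting atom/weight data to the unique barycentric representation of $x$ on its carrying face. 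What the paper's argument buys is brevity and a cleaner conceptual statement ($\iota$ is a homeomorphism onto its image), though it quietly relies on the closedness of $\iota(\partial\cB_{2k})$ in $\sP(S^1)$, which itself takes some unwinding. What your argument buys is a completely explicit picture: it makes the dependence of the atoms and weights on $x$ transparent, isolates the continuity of the parameter-to-measure map $(s_j,\mu_j)_j \mapsto \sum_j\mu_j\delta_{s_j}$ into $(\vrm{S^1}{r},W_1)$ as a reusable ingredient, and shows exactly where the simplicial structure of the faces is invoked. Both proofs are valid; one minor point to be careful about in yours is the padding step --- when $\mu^{(n)}_j = 0$ the choice of $s^{(n)}_j$ is arbitrary, but since the corresponding limit weight is then also zero this has no effect on the identification of $\nu^\infty$ with $\iota(x)$, as you implicitly use.
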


\begin{proof}
Recall $p$ is the radial projection to the boundary of $\cB_{2k}$.
We will show that
\[(p\circ \sm_{2k})|_{\iota(\partial\cB_{2k})}\colon\iota(\partial\cB_{2k})\to\partial\cB_{2k}\]
is a bijective continuous function from a compact space to a Hausdorff space.
It will then follow from~\cite[Theorem~3.7]{armstrong2013basic} that $(p\circ \sm_{2k})|_{\iota(\partial\cB_{2k})}$ is a homeomorphism, with a continuous inverse $\iota \colon\partial\cB_{2k}\to\iota(\partial\cB_{2k})$.
Therefore $\iota\colon \partial\cB_{2k} \to \vrm{S^1}{r}$ is continuous.

The fact that $(p\circ \sm_{2k})|_{\iota(\partial\cB_{2k})}$ is a bijective function follows from Corollary~\ref{cor:inclusion}.
The space $\partial\cB_{2k}$ is Hausdorff since it inherits the subspace topology from Euclidean space.
Finally, to see that $\iota(\partial\cB_{2k})$ is compact, we note that $\iota(\partial\cB_{2k})$ is a closed subset of $\sP(S^1)$, the space of all Radon probability measures on $S^1$ equipped with the Wasserstein metric.
Since $S^1$ is compact, it follows that $\sP(S^1)$ is compact by~\cite[Remark~6.19]{villani2008optimal}, and therefore $\iota(\partial\cB_{2k})$ is compact as a closed subset of a compact space.
\end{proof}

We can now give the following corollary of Theorem~\ref{thm:miss-origin}.

\begin{corollary}\label{cor:nontrival-homology}
For $\frac{2\pi(k-1)}{2k-1}\le r < \frac{2\pi k}{2k+1}$, the $(2k-1)$-dimensional homology, cohomology, and homotopy groups of $\vrm{S^1}{r}$ are nontrivial.
\end{corollary}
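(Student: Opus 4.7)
The plan is to exhibit $\partial\cB_{2k}\cong S^{2k-1}$ as a retract of $\vrm{S^1}{r}$ and then read off nontriviality of the relevant algebraic invariants from the retraction.

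First I would verify that the composition $(p\circ\sm_{2k})\circ \iota$ is the identity on $\partial\cB_{2k}$. Given $x=\sum_i \lambda_i \sm_{2k}(t_i)\in\partial\cB_{2k}$ written in barycentric coordinates on a face, by definition $\iota(x)=\sum_i \lambda_i \delta_{t_i}\in\vrm{S^1}{r}$ (well-defined since $r\ge \frac{2\pi(k-1)}{2k-1}$ by Corollary~\ref{cor:inclusion}), and then
\[\sm_{2k}\!\left(\sum_i \lambda_i \delta_{t_i}\right) = \sum_i \lambda_i\sm_{2k}(t_i) = x.\]
Since $x\in\partial\cB_{2k}$ already, the radial projection $p$ fixes it, so $(p\circ\sm_{2k})\circ\iota=\id_{\partial\cB_{2k}}$. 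The hypothesis $r<\frac{2\pi k}{2k+1}$ and Theorem~\ref{thm:miss-origin} are what make $p\circ\sm_{2k}$ well-defined in the first place, since they guarantee $\vec{0}\notin \sm_{2k}(\vrm{S^1}{r})$.

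Having established that $\iota$ is a section of $p\circ\sm_{2k}$, and since both maps are continuous (by Section~\ref{ssec:composition} and Lemma~\ref{lem:iota-cont}), functoriality forces the induced maps
\[\iota_*\colon H_{2k-1}(\partial\cB_{2k})\to H_{2k-1}(\vrm{S^1}{r}),\qquad \iota_*\colon \pi_{2k-1}(\partial\cB_{2k})\to \pi_{2k-1}(\vrm{S^1}{r})\]
to be injective, and the induced map $\iota^*$ on $(2k-1)$-cohomology to be surjective. Because $\partial\cB_{2k}$ is homeomorphic to $S^{2k-1}$, we have $H_{2k-1}(\partial\cB_{2k})\cong H^{2k-1}(\partial\cB_{2k})\cong \pi_{2k-1}(\partial\cB_{2k})\cong\Z$, and these nontrivial groups therefore inject into (respectively surject onto) the corresponding groups of $\vrm{S^1}{r}$. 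This immediately yields the claim.

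There is essentially no obstacle beyond assembling the earlier results: the nontriviality of $(2k-1)$-invariants is a formal consequence of the retraction. The only subtlety worth checking carefully is that $\iota$ is genuinely well-defined on all of $\partial\cB_{2k}$ in this range of $r$ (which is exactly the content of Corollary~\ref{cor:inclusion}, since every face of $\cB_{2k}$ has diameter at most $\frac{2\pi(k-1)}{2k-1}\le r$) and that $p\circ\sm_{2k}$ avoids the origin (which is Theorem~\ref{thm:miss-origin} combined with $r<\frac{2\pi k}{2k+1}$). The deeper question — whether $\iota$ and $p\circ\sm_{2k}$ are actually homotopy inverses, which would upgrade this corollary to the full Conjecture~\ref{conj:homotopy-S1} — is deferred to Section~\ref{ssec:inverses} and is not needed here.
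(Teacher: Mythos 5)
Your proof is correct and takes essentially the same approach as the paper: both exhibit $\partial\cB_{2k}\cong S^{2k-1}$ as a retract of $\vrm{S^1}{r}$ via $(p\circ\sm_{2k})\circ\iota=\id$ and then conclude by functoriality. Your write-up simply spells out the well-definedness of $\iota$ (via Corollary~\ref{cor:inclusion}) and of $p\circ\sm_{2k}$ (via Theorem~\ref{thm:miss-origin}) and the resulting injectivity/surjectivity of induced maps more explicitly than the paper does.
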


\begin{proof}
Theorem~\ref{thm:miss-origin} implies that for in this range of $r$ values, the map $(p\circ\sm_{2k})\circ\iota$ is the identity map on $\partial\cB_{2k}$, i.e.\ that the space $\partial \cB_{2k}\cong S^{2k-1}$ is a retract of $\vrm{S^1}{r}$.
\end{proof}

\subsection{Show $p\circ \sm_{2k}$ and $\iota$ are homotopy inverses.}\label{ssec:inverses}

We conjecture that the composition $\iota\circ p\circ\sm_{2k}$ has a controllable effect on the diameter of any measure in the Vietoris--Rips thickening.

\begin{conjecture}\label{conj:diam}
Given $\frac{2\pi (k-1)}{2k-1}\leq r < \frac{2\pi k}{2k+1}$ and $\mu\in\vrm{S^1}{r}$, we conjecture \[\diam(\supp(\mu))=\diam(\supp(\mu)\cup\supp(\iota \circ p\circ \sm_{2k}(\mu))).\]
\end{conjecture}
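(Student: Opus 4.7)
The plan is to recast Conjecture~\ref{conj:diam} as a geometric statement about the convex hull of a sub-arc of the symmetric moment curve, and then attack it via duality with raked homogeneous trigonometric polynomials. Let $A\subseteq S^1$ denote the shortest closed arc containing $\supp(\mu)$, and set $D:=\diam(\supp(\mu))$, the length of $A$; by hypothesis $D\le r<\frac{2\pi k}{2k+1}<\pi$. Since $D<\pi$, a point $t\in S^1$ satisfies $\diam(\supp(\mu)\cup\{t\})\le D$ exactly when $t\in A$, so the desired equality is equivalent to $\supp(\iota(p(\sm_{2k}(\mu))))\subseteq A$. Writing $v:=\sm_{2k}(\mu)$ and $\cB_{2k}^A:=\conv(\sm_{2k}(A))$, the claim reduces to showing $p(v)\in\cB_{2k}^A$; geometrically, the ray from the origin through $v\in\cB_{2k}^A\setminus\{\vec{0}\}$ (nonzero by Theorem~\ref{thm:miss-origin}) should exit $\cB_{2k}^A$ at a point already on $\partial\cB_{2k}$.

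I would argue by contradiction. Let $F=\conv(\sm_{2k}(T))$ be the face of $\cB_{2k}$ whose relative interior contains $p(v)$, and suppose some $s\in T\setminus A$. Pick a supporting hyperplane of $\cB_{2k}$ at $F$ with normal $w$, so $q_w(t):=\langle w,\sm_{2k}(t)\rangle$ is a raked homogeneous trigonometric polynomial of degree at most $2k-1$ satisfying $q_w(t)\le c$ on all of $S^1$, with equality exactly for $t\in T$. Writing $v=\sum_i\lambda_i\sm_{2k}(t_i)$ (with $\{t_i\}\subseteq A$ and $\lambda_i>0$) and $p(v)=\sum_j\alpha_j\sm_{2k}(s_j)$ (with $\{s_j\}=T$ and $\alpha_j>0$), and using $p(v)=t_0 v$ for some $t_0\ge 1$, one obtains the explicit linear dependence
\[\sum_j\alpha_j\sm_{2k}(s_j)-t_0\sum_i\lambda_i\sm_{2k}(t_i)=\vec{0},\]
whose sign pattern is positive on $T$ and negative on $\{t_i\}$. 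After reducing both expansions via Carath\'eodory so that the combined support has at most $2k+1$ distinct non-antipodal points, Lemma~\ref{lem:nullspace} uniquely determines the coefficients, and Lemma~\ref{lem:alphasign} translates the required sign pattern into a combinatorial constraint on the cyclic arrangement of $\{t_i\}\cup T$ around $S^1$. The hope is that this constraint, together with $\{t_i\}\subseteq A$ and $D<\frac{2\pi k}{2k+1}$, is incompatible with $s\in T\setminus A$, yielding the contradiction.

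The principal technical obstacle lies in executing the sign/alternation analysis uniformly in $k$. Lemma~\ref{lem:nullspace} characterizes linear dependences only on $2k+1$ distinct non-antipodal points, while $|\{t_i\}\cup T|$ may exceed $2k+1$, and the Carath\'eodory reduction must preserve $v$ and $p(v)$ simultaneously without necessarily preserving which arc is minimal. Moreover the sign pattern (positive on $T$, negative on $\{t_i\}$) is mixed, and the interaction with Lemma~\ref{lem:alphasign}'s monochromatic criterion is subtle. For $k=2$ the explicit classification in Theorem~\ref{thm:faces} makes the analysis tractable: two-dimensional faces of $\cB_4$ are equilateral triangles whose vertex sets span arcs of length $4\pi/3>4\pi/5$, so no such face can have all vertices in an admissible $A$, and one-dimensional faces are edges of support-diameter at most $2\pi/3$, handled by the sign analysis above. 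For $k\ge 3$ the facial structure of $\cB_{2k}$ is only partially understood (via Vinzant's edge characterization and Sinn's simpliciality result), and the main difficulty is to find a face-free argument---most plausibly through a multi-point generalization of Lemma~\ref{lem:alphasign} tailored to mixed-sign linear dependences on the symmetric moment curve, or via an additional structural input from the $\mathrm{SO}(2)$-equivariance of $\cB_{2k}$.
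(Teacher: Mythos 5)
The statement you are attempting is Conjecture~\ref{conj:diam}; the paper does not prove it except for $r\le\frac{2\pi}{3}$ (Proposition~\ref{prop:diam}, together with the remark preceding it). Your write-up is correctly framed as a research sketch, and you acknowledge the $k\ge 3$ gap, but there is a concrete error in your reduction that also affects $k=2$. You set $A$ to be the shortest closed arc containing $\supp(\mu)$ and claim $D:=\diam(\supp(\mu))$ equals the length of $A$, and that $\diam(\supp(\mu)\cup\{t\})\le D$ iff $t\in A$. This fails whenever $\supp(\mu)$ is not contained in an arc of length at most $\pi$. The basic example is $\supp(\mu)$ an inscribed equilateral triangle: then $D=\frac{2\pi}{3}$ but $A$ has length $\frac{4\pi}{3}$, and the set of $t$ with $\diam(\supp(\mu)\cup\{t\})\le D$ is just the three vertices, not an arc. (For $k=3$, a regular pentagon has diameter $\frac{4\pi}{5}<\frac{6\pi}{7}$ yet spans the whole circle, so the issue persists.) Proposition~\ref{prop:diam} explicitly splits into two cases: either $\supp(\mu)$ lies in an arc of length at most $\frac{2\pi}{3}$, handled by Proposition~\ref{prop:cones-one-third}, or $\supp(\mu)$ is the equilateral triangle, in which case $p\circ\sm_4(\mu)=\sm_4(\mu)$ lies on $\partial\cB_4$ by Theorem~\ref{thm:faces} so $\iota\circ p\circ\sm_4(\mu)=\mu$. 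Your remark that a two-dimensional face of $\cB_4$ ``cannot have all vertices in an admissible $A$'' describes a configuration the proof must accommodate (this is exactly when $p(v)=v$), not a contradiction you can derive.

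On the part that does overlap with the paper, your plan to analyze the mixed-sign dependence $\sum_j\alpha_j\sm_{2k}(s_j)-t_0\sum_i\lambda_i\sm_{2k}(t_i)=\vec{0}$ via Lemmas~\ref{lem:nullspace} and~\ref{lem:alphasign} is a genuinely different route from the paper's. Proposition~\ref{prop:cones-one-third} instead produces a Farkas witness directly: it places $2k-1$ ``vanishing points'' on $S^1$, invokes Theorem~\ref{thm:farkasvector} to obtain a raked homogeneous trigonometric polynomial of degree $2k-1$ with prescribed sign alternation, and checks the required inequalities in two subcases ($N=0$ versus $N=1$), feeding the result into Lemma~\ref{lem:coneintersect}. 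That sidesteps the obstacles you correctly flag: Lemma~\ref{lem:nullspace} requires exactly $2k+1$ distinct non-antipodal points, the combined support $\{t_i\}\cup T$ may be larger, and a Carath\'{e}odory reduction would have to preserve $v$ and $p(v)$ simultaneously while keeping track of which points came from which side. To push your route you would at minimum need (a) the support/triangle dichotomy above and (b) a mixed-sign analogue of Lemma~\ref{lem:alphasign} compatible with such a reduction; neither is in hand, which is consistent with the statement remaining a conjecture for $r>\frac{2\pi}{3}$.
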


\begin{theorem}
Conjecture~\ref{conj:diam} would imply Conjecture~\ref{conj:homotopy-S1}, namely that for $\frac{2\pi(k-1)}{2k-1}\le r<\frac{2\pi k}{2k+1}$, we have
\[\vrm{S^1}{r}\simeq\partial \cB_{2k}\cong S^{2k-1}.\]
\end{theorem}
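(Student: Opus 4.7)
The plan is to build a straight-line homotopy, inside $\vrm{S^1}{r}$, from the identity to $\iota\circ p\circ \sm_{2k}$. Combined with the fact from Corollary~\ref{cor:nontrival-homology} that $(p\circ \sm_{2k})\circ\iota=\id_{\partial\cB_{2k}}$, this will show that $p\circ \sm_{2k}$ and $\iota$ are homotopy inverses, yielding $\vrm{S^1}{r}\simeq \partial\cB_{2k}\cong S^{2k-1}$.

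More precisely, I would define
\[
H\colon \vrm{S^1}{r}\times[0,1]\to \vrm{S^1}{r},\qquad H(\mu,s) = (1-s)\,\mu + s\,(\iota\circ p\circ \sm_{2k})(\mu),
\]
where the convex combination of measures is taken pointwise on the common support. The first step is to verify that $H$ actually takes values in $\vrm{S^1}{r}$. For any $\mu\in\vrm{S^1}{r}$, the support of $H(\mu,s)$ is contained in $\supp(\mu)\cup\supp((\iota\circ p\circ \sm_{2k})(\mu))$, and Conjecture~\ref{conj:diam} asserts that this union has the same diameter as $\supp(\mu)$, which is at most $r$. Hence $H(\mu,s)\in\vrm{S^1}{r}$ for all $s\in[0,1]$. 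The endpoints are clearly $H(\mu,0)=\mu$ and $H(\mu,1)=(\iota\circ p\circ \sm_{2k})(\mu)$.

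The second step is continuity of $H$. The map $\iota\circ p\circ \sm_{2k}\colon \vrm{S^1}{r}\to\vrm{S^1}{r}$ is continuous: $\sm_{2k}$ is continuous on $\vrm{S^1}{r}$ by the extension argument in Section~\ref{ssec:composition}, the radial projection $p$ is continuous on $\R^{2k}\setminus\{\vec{0}\}$ (and Theorem~\ref{thm:miss-origin} ensures $\sm_{2k}$ avoids the origin in this range of $r$), and $\iota$ is continuous by Lemma~\ref{lem:iota-cont}. The only remaining ingredient is that the linear interpolation map $(\mu,\nu,s)\mapsto(1-s)\mu+s\nu$ is jointly continuous on $\sP(S^1)\times\sP(S^1)\times[0,1]$ in the $1$-Wasserstein metric; this follows from the standard convexity inequality
\[
W_1\bigl((1-s)\mu_1+s\nu_1,\,(1-s)\mu_2+s\nu_2\bigr)\le (1-s)\,W_1(\mu_1,\mu_2)+s\,W_1(\nu_1,\nu_2),
\]
together with the estimate $W_1((1-s)\mu+s\nu,(1-s')\mu+s'\nu)\le |s-s'|\,W_1(\mu,\nu)$, which is immediate from transport plans that keep the matched masses fixed and move only the interpolating mass. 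Composing with $\id\times(\iota\circ p\circ \sm_{2k})\times\id$ gives continuity of $H$.

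Once $H$ is built, we have $\iota\circ p\circ \sm_{2k}\simeq \id_{\vrm{S^1}{r}}$, and Corollary~\ref{cor:nontrival-homology} already gives $(p\circ \sm_{2k})\circ \iota=\id_{\partial\cB_{2k}}$, so the two maps are mutually inverse up to homotopy. The main obstacle is of course producing the diameter control of Conjecture~\ref{conj:diam} itself; granting this, the only nontrivial checks in the argument above are the well-definedness step (which uses the conjecture directly) and the Wasserstein-continuity of convex combinations, and neither poses a serious difficulty.
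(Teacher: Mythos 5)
Your proposal is essentially identical to the paper's proof: the same linear homotopy $H(\mu,s)=(1-s)\mu+s\,(\iota\circ p\circ \sm_{2k})(\mu)$, the same use of Conjecture~\ref{conj:diam} to guarantee that the diameter of the combined support stays below $r$ so $H$ lands in $\vrm{S^1}{r}$, and the same appeal to $(p\circ\sm_{2k})\circ\iota=\id_{\partial\cB_{2k}}$ from the retraction argument. The only difference is in the continuity step: the paper cites Lemma~3.8 of~\cite{AAF} for continuity of linear homotopies in Vietoris--Rips metric thickenings, whereas you reprove this directly from the convexity of $W_1$ under mixtures and the estimate $W_1((1-s)\mu+s\nu,(1-s')\mu+s'\nu)\le|s-s'|\,W_1(\mu,\nu)$; both routes are correct, and yours is simply more self-contained.
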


\begin{proof}
As observed in the proof of Corollary~\ref{cor:nontrival-homology}, we have that $(p\circ \sm_{2k})\circ\iota=\mathrm{id}_{\partial\cB_{2k}}$.
Hence, it remains only to show that $\iota \circ (p\circ \sm_{2k})\simeq \mathrm{id}_{\vrm{S^1}{r}}$.
We will do so using the simplest possible homotopy, a linear homotopy.
Indeed, consider the linear homotopy $H\colon \vrm{S^1}{r}\times I\to \vrm{S^1}{r}$ defined by \[H(\mu,t)=(1-t)\mu+t[\iota \circ (p\circ \sm_{2k})(\mu)].\]
Conjecture~\ref{conj:diam} would imply that $H$ is well-defined, and hence also continuous by Lemma~3.8 of~\cite{AAF}.
Note $H(-,0)=\mathrm{id}_{\vrm{S^1}{r}}$ and $H(-,1)=\iota \circ (p\circ \sm_{2k})$.
Hence, this would imply $\vrm{S^1}{r}\simeq \partial B_{2k}\cong S^{2k-1}$.
\end{proof}

We remark that Conjecture~\ref{conj:diam} is true for $r<\frac{2\pi}{3}$, giving $\vrm{S^1}{r}\simeq S^1$ for $r<\frac{2\pi}{3}$.

\medskip

The remainder of this section is devoted to proving that Conjecture~\ref{conj:diam} is true for $r=\frac{2\pi}{3}$, and hence $\vrm{S^1}{\frac{2\pi}{3}}\simeq S^3$.
In order to prove this, we first describe a number of intermediate lemmas.
 
The first such lemma, Farkas' Lemma, characterizes when a vector lies in the convex cone generated by a set of vectors.
Let $\R^+=\{t\in\R~|~t\ge0\}$.

\begin{lemma}[Farkas' Lemma~\cite{boyd2004convex}]\label{lem:farkas}
Let $A\in \R^{m\times n}$, let $a_i\in\R^m$ for $1\leq i \leq n$ denote the columns of $A$, and let $v\in \R^m$.
Then, exactly one of the following is true: 
\begin{enumerate}
\item There exists $x\in (\R^+)^n$ such that $Ax=v$.
\item There exists $y\in \R^m$ such that $a_i^\intercal y\geq 0$ for all $i$ and $v^\intercal  y<0$.
\end{enumerate}
\end{lemma}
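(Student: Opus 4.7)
The plan is to prove Farkas' Lemma by the standard two-step approach: first show that conditions (1) and (2) are mutually exclusive, and then show that at least one of them must hold via a separating hyperplane argument applied to the finitely generated convex cone $\cone\{a_1, \ldots, a_n\}$.

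For mutual exclusivity, I would suppose for contradiction that both (1) and (2) hold. Then, picking $x \in (\R^+)^n$ with $Ax = v$ and $y$ satisfying $a_i^\intercal y \geq 0$ for each $i$ and $v^\intercal y < 0$, I would compute
\[v^\intercal y = (Ax)^\intercal y = \sum_{i=1}^n x_i (a_i^\intercal y) \geq 0,\]
contradicting $v^\intercal y < 0$. This step is routine.

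For the main direction, I would assume (1) fails, i.e.\ $v \notin C := \cone\{a_1, \ldots, a_n\} = \{Ax : x \in (\R^+)^n\}$. The goal is to produce $y$ strictly separating $v$ from $C$. Since $C$ is convex and contains $\vec{0}$, and $v \notin C$, if $C$ is closed then the separating hyperplane theorem (applied to the singleton $\{v\}$ and the closed convex set $C$) yields $y \in \R^m$ such that $y^\intercal v < \inf_{z \in C} y^\intercal z$. Because $C$ is a cone containing the origin, the infimum is at most $0$, and the cone property forces it to equal $0$ (otherwise scaling a point $z \in C$ with $y^\intercal z < 0$ by arbitrarily large positive scalars would push the infimum to $-\infty$). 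Thus $y^\intercal v < 0$ and $y^\intercal z \geq 0$ for every $z \in C$; specializing $z = a_i$ gives exactly condition (2).

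The main obstacle is justifying that the finitely generated cone $C$ is closed in $\R^m$, since the separating hyperplane theorem in the form I want requires a closed convex set. I would handle this as a standalone lemma by induction on $n$: the base case $n = 1$ is a ray and is evidently closed; for the inductive step, one can write $C = \bigcup_{I} \cone\{a_i : i \in I\}$ where $I$ ranges over subsets of $\{1,\dots,n\}$ indexing linearly independent subfamilies, and argue that each such subcone is closed (being the image of $(\R^+)^{|I|}$ under a linear injection, which is a homeomorphism onto its image) and that $C$ is a finite union of such closed sets, hence closed. Once closedness is in hand, the separating hyperplane argument concludes the proof cleanly.
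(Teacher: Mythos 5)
The paper does not give a proof of this lemma; it is stated as a citation to Boyd and Vandenberghe's textbook, so there is no internal proof to compare against. Your argument is a correct and standard proof of Farkas' Lemma via strict separation. The mutual-exclusivity computation $v^\intercal y = \sum_i x_i(a_i^\intercal y) \ge 0$ is routine and correct. For the substantive direction, you correctly identify that the only real work is establishing closedness of the finitely generated cone $C = \{Ax : x \in (\R^+)^n\}$, since strict separation of a point from a convex set requires the set to be closed (finitely generated cones are closed, but general convex cones need not be, so this cannot be waved away). Your treatment is right: by the conical Carath\'{e}odory theorem, every element of $C$ is a non-negative combination of a linearly independent subfamily of the $a_i$, so $C$ is a finite union of subcones each of which is the image of an orthant $(\R^+)^{|I|}$ under an injective linear map; an injective linear map between Euclidean spaces is proper, hence closed, and a finite union of closed sets is closed. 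Then $0 \in C$ forces $\inf_{z\in C} y^\intercal z \le 0$, and the cone property forces the infimum to be exactly $0$ (else it is $-\infty$, contradicting the finite strict separation), giving $y^\intercal v < 0$ and $a_i^\intercal y \ge 0$. One minor stylistic point: you frame the closedness argument as ``induction on $n$,'' but the decomposition you then describe is direct and never invokes an inductive hypothesis; the induction wrapper can be dropped.
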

Case (1) above is equivalent to $v\in\cone(\{a_1,\dots,a_n\})$, and case (2) is equivalent to $v\notin\cone(\{a_1,\dots,a_n\})$.
We can use Farkas' Lemma to study how cones intersect.

\begin{lemma}\label{lem:coneintersect}
Let $u_0,\dots, u_n,v_0,\dots,v_k\in\R^{m}$.
If there exists some $y\in \R^m$ such that $u_i^\intercal y\geq 0$ for $0\leq i\leq n$ and $v_i^\intercal y<0$ for $0\leq i\leq k$,
then $\cone\left(\left\{u_0,\dots, u_n\right\}\right)\cap \cone\left(\left\{v_0,\dots,v_k\right\}\right)=\vec{0}$.
\end{lemma}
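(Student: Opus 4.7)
My plan is to give a short direct proof by taking the inner product with the vector $y$, exploiting the sign conditions on both sides simultaneously. This is essentially the geometric content of Farkas' Lemma (Lemma~\ref{lem:farkas}): the vector $y$ is a witness/separating functional that is nonnegative on every $u_i$ and strictly negative on every $v_j$, and such a functional is enough to force the cones to meet only at the origin.

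More concretely, suppose $w \in \cone(\{u_0,\dots,u_n\}) \cap \cone(\{v_0,\dots,v_k\})$. Then we may write $w = \sum_{i=0}^n \alpha_i u_i$ with $\alpha_i \ge 0$, and also $w = \sum_{j=0}^k \beta_j v_j$ with $\beta_j \ge 0$. I would then compute $w^\intercal y$ in two ways. Using the first representation, $w^\intercal y = \sum_i \alpha_i (u_i^\intercal y) \ge 0$ since each factor is nonnegative. Using the second representation, $w^\intercal y = \sum_j \beta_j (v_j^\intercal y) \le 0$ since each $\beta_j \ge 0$ and each $v_j^\intercal y < 0$. These two inequalities force $w^\intercal y = 0$.

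Now the key squeeze: from $\sum_j \beta_j (v_j^\intercal y) = 0$ with every summand non-positive and with $v_j^\intercal y < 0$ strictly, I conclude $\beta_j = 0$ for all $j$. Substituting back gives $w = \sum_j \beta_j v_j = \vec{0}$, which completes the argument.

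There is really no substantive obstacle here; the lemma is a straightforward linear-algebra/duality statement and the only thing to be careful about is invoking the strict inequality $v_j^\intercal y < 0$ (not just $\le 0$) in the last step to kill the coefficients $\beta_j$. The nonnegativity of $u_i^\intercal y$ only needs to be weak, which matches the hypothesis. No additional machinery beyond nonnegative combinations and the given separating direction $y$ is required.
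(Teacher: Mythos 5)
Your proof is correct and takes essentially the same approach as the paper: both pair an element of the intersection against $y$ and use the strict negativity of each $v_j^\intercal y$ (together with the nonnegativity of each $u_i^\intercal y$) to force all coefficients on the $v_j$ side to vanish. The only cosmetic difference is that the paper cites Farkas' Lemma to conclude the point is not in $\cone(\{u_0,\dots,u_n\})$, whereas you spell out the equivalent direct inner-product computation; the underlying argument is identical.
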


\begin{proof}
Suppose such a vector $y\in\R^m$ exists, and let $\vec{0}\neq v=\sum_{i=0}^k \lambda_i v_i\in \cone(\{v_0,\dots,v_k\})$.
Then, because there exists some $0\leq j \leq k$ with $\lambda_j>0$, we have $v^\intercal  y=\sum_{i=0}^k \lambda_i v_i^\intercal y\leq \lambda_j  v_j^\intercal y <0$.
Hence, by Lemma~\ref{lem:farkas}, $v$ is not contained in the convex cone generated by $\{u_0,\dots,u_n\}$.
\end{proof}

The following theorem will be used to construct a vector satisfying the hypotheses of Lemma~\ref{lem:coneintersect}, given certain configurations of points along the curve $\sm_{2k}$.

\begin{theorem}\label{thm:farkasvector}
Fix a positive integer $k$ and distinct $v_1,\dots, v_{2k-1}\in S^1$ with no two points antipodal.
Let $u_1,\dots, u_{4k-2}$ denote the set of points $\{v_1, \dots, v_{2k-1}\}\cup \{v_1+\pi, \dots, v_{2k-1}+\pi\}$ labeled in counterclockwise order such that $u_1=v_1$.
Then, there exists a raked homogeneous trigonometric polynomial $f$ of degree $2k-1$ such that $f(u_i)=0$ for $1\leq i\leq 4k-2$. 
Further, $\sign(f(t))=(-1)^i$ for $t\in (u_i,u_{i+1})_{S^1}$, where we define $u_{4k-1}=u_1$.
\end{theorem}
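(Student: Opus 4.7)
My plan is to count dimensions on the space of candidate trigonometric polynomials, use the odd symmetry of raked polynomials to double the vanishing conditions for free, and then apply a polynomial degree bound to rule out any extra zeros.

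First, the real vector space of raked homogeneous trigonometric polynomials of degree at most $2k-1$ is $2k$-dimensional, spanned by $\cos((2j-1)t)$ and $\sin((2j-1)t)$ for $1\le j \le k$. Imposing the $2k-1$ linear conditions $f(v_1)=\dots=f(v_{2k-1})=0$ cuts this space down to dimension at least $1$, so there exists a nonzero $f$ vanishing on $v_1,\dots,v_{2k-1}$. Because every frequency appearing in $f$ is odd, $f(t+\pi)=-f(t)$, and so $f$ vanishes automatically at each $v_i+\pi$ as well. Thus $f$ has (at least) the $4k-2$ distinct zeros $u_1,\dots,u_{4k-2}$.

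Next I would bound the total number of zeros of $f$ on $S^1$. Setting $w=e^{it}$ and writing $f(t)=\sum_{j=1}^k (c_j e^{i(2j-1)t}+\overline{c_j}e^{-i(2j-1)t})$ with $c_j\in\mathbb{C}$, I compute
\[
w^{2k-1}f(t) = \sum_{j=1}^k\bigl(c_j w^{2(j+k-1)}+\overline{c_j}w^{2(k-j)}\bigr) = Q(w^2),
\]
where $Q(z)=\sum_{j=1}^k c_j z^{j+k-1}+\sum_{j=1}^k \overline{c_j} z^{k-j}\in\mathbb{C}[z]$ has degree at most $2k-1$. Since $f\not\equiv 0$, the coefficients $c_j$ are not all zero, so $Q\not\equiv 0$ and has at most $2k-1$ roots in $\mathbb{C}$. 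Because the map $t\mapsto e^{2it}$ is $2$-to-$1$ from $S^1$ to the unit circle in $\mathbb{C}$, the polynomial $f$ has at most $2(2k-1)=4k-2$ zeros in $S^1$.

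Combining these two facts, $f$ must vanish at exactly the $4k-2$ points $u_1,\dots,u_{4k-2}$, and every zero is simple (higher multiplicity together with the $u_i$ would contradict the degree bound, using that a double zero of $f$ at $t_0$ corresponds to a double zero of $Q$ at $e^{2it_0}$). Consequently $f$ changes sign at each $u_i$, so it alternates on the arcs $(u_i,u_{i+1})_{S^1}$. The antipodal relation $u_{i+(2k-1)}=u_i+\pi$ together with $f(t+\pi)=-f(t)$ means the two possible alternation patterns $\pm(-1)^i$ are both globally consistent (this uses $(-1)^{2k-1}=-1$), so after possibly replacing $f$ by $-f$ we obtain $\sign(f(t))=(-1)^i$ on $(u_i,u_{i+1})_{S^1}$, where $u_{4k-1}=u_1$.

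The step I expect to be the main technical point is the degree bound: one must verify cleanly that the substitution $w=e^{it}$ turns $w^{2k-1}f(t)$ into a genuine polynomial in $z=w^2$ of degree at most $2k-1$, that the $2$-to-$1$ nature of $z=e^{2it}$ correctly accounts for all zeros on $S^1$, and that multiplicities are preserved. Everything else is either dimension counting, the odd-symmetry observation, or a final sign adjustment.
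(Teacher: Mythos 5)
Your proof is correct but takes a genuinely different route from the paper's. The paper constructs $f$ explicitly: it forms the $2k\times 2k$ matrix $N(t)$ whose columns are $\sm_{2k}(t),\sm_{2k}(v_1),\dots,\sm_{2k}(v_{2k-1})$, applies the Vandermonde-type determinant formula from Lemma~\ref{lem:det}, and after dividing out a nonzero constant obtains the closed form $f(t)=\prod_{l=1}^{2k-1}\sin(v_l-t)$; the roots and the sign pattern $\sign f(t)=(-1)^{\rho(t)}$ with $\rho(t)=\#\{v_l\mid v_l\in(t+\pi,t)_{S^1}\}$ are then read directly off this product, with the translation $\rho(t)\equiv i\pmod 2$ on $(u_i,u_{i+1})_{S^1}$ left implicit. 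Your argument is non-constructive: a dimension count on the $2k$-dimensional space of raked polynomials of degree at most $2k-1$ produces a nonzero $f$ vanishing at the $v_l$; the odd frequencies give $f(t+\pi)=-f(t)$ and hence the antipodal zeros for free; and the substitution $w=e^{it}$ turns $w^{2k-1}f(t)$ into a polynomial $Q(w^2)$ of degree at most $2k-1$, capping the zero count at $4k-2$, which forces all $4k-2$ zeros to be exactly the $u_i$ and all simple, so that the sign alternates between consecutive arcs, and after a global sign flip you get $(-1)^i$. Each approach buys something: the paper's yields the explicit formula $\prod\sin(v_l-t)$ and the sign rule $(-1)^{\rho(t)}$, both of which are reused directly in Remark~\ref{rmk:farkasvector} and in the proof of Proposition~\ref{prop:cones-one-third}, whereas your route avoids the determinant computation entirely and derives the sign alternation more transparently from simplicity of zeros (a point the paper leaves to the reader). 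Your degree-bound step is the standard ``a real trigonometric polynomial of degree $n$ has at most $2n$ zeros on the circle,'' specialized cleanly to the raked case; it is correct, including the multiplicity bookkeeping under $t\mapsto e^{2it}$.
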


\begin{proof}
For $t\in S^1$, consider points $\sm_{2k}(t)\in\R^{2k}$ to be written as column vectors and define the $2k\times 2k$ matrix
\[N(t)=\begin{pmatrix}
\sm_{2k}(t) & \sm_{2k}(v_1) & \sm_{2k}(v_2) & \cdots & \sm_{2k}(v_{2k-2}) & \sm_{2k}(v_{2k-1}) 
\end{pmatrix}.
\]
By Lemma~\ref{lem:det}, 
\[\det(N(t))=\kappa \Biggl(\prod_{\substack{1\leq j<l\leq 2k-1 }}\sin(v_l-v_j) \Biggr)\Biggl(\prod_{\substack{1\leq l\leq 2k-1 }} \sin(v_l - t) \Biggr),
\]
where $\kappa$ is a nonzero constant that depends only on $k$.
Further, by considering the cofactor expansion of this determinant along the first column of $N(t)$, observe that $\det(N(t))$ is a raked homogeneous trigonometric polynomial of degree $2k-1$.
Because no two elements of  $\{v_1,\ldots,v_{2k-1}\}$ are equal or antipodal, note that $\prod_{\substack{1\leq j<l\leq 2k-1 }}\sin(v_l-v_j)\neq 0$.
Hence,
\[f(t)=\frac{1}{\kappa} \Biggl(\prod_{\substack{1\leq j<l\leq 2k-1 }}\sin(v_l-v_j) \Biggr)^{-1}\det(N(t))=\prod_{\substack{1\leq l\leq 2k-1 }} \sin(v_l - t)\]
is a well-defined raked homogeneous trigonometric polynomial of degree $2k-1$ with real roots $\{v_1,\dots, v_{2k-1}\}\cup \{v_1+\pi,\dots, v_{2k-1}+\pi\}$.
Finally, observe for $t\notin \{v_1, \dots, v_{2k-1}\}\cup \{v_1+\pi, \dots, v_{2k-1}+\pi\}$ we have
\[\sign\left(f(t)\right)=\sign\Biggl(\prod_{\substack{1\leq l\leq 2k-1 }} \sin(v_l - t) \Biggr)=(-1)^{\rho(t)},
\]
where we define $\rho(t)=\#\{v_l\mid v_l\in(t+\pi,t)_{S^1}, \,1\le l\le 2k-1 \}$.
\end{proof}

\begin{remark}\label{rmk:farkasvector}
In the setting of Theorem~\ref{thm:farkasvector}, there exists a vector $y\in\R^{2k}$ such that $\left(\sm_{2k}(u_i)\right)^\intercal y=0$ for all $i$.
Further, $\sign\left(\left(\sm_{2k}(t)\right)^\intercal y\right)=(-1)^i$ for $t\in (u_i,u_{i+1})_{S^1}$, where we define $u_{4k-1}=u_1$.
\end{remark}

\begin{proposition}\label{prop:cones-one-third}
Let distinct $t_1,\dots, t_{n}\in S^1$ be in counterclockwise order and contained in an arc $[t_1,t_n]_{S^1}$ of length at most $\frac{2\pi}{3}$.
Let distinct $s_1,\dots,s_{m}\in S^1$ be such that $\conv(\sm_4(\{s_1,\dots,s_m\}))$ is a face of $\cB_4$, and $\{s_1,\dots,s_{m}\}\nsubseteq[t_1,t_n]_{S^1}$.
Then
\[\cone\left(\sm_4(\{s_1,\dots,s_m\})\right)\cap \cone\left(\sm_4(\{t_1,\dots,t_n\})\right)=\cone\left(\sm_4(\{s_1,\dots,s_m\}\cap \{t_1,\dots,t_n\})\right).\]
\end{proposition}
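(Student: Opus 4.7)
Write $A=\{t_1,\dots,t_n\}$, $B=\{s_1,\dots,s_m\}$, and $C = A\cap B$.  The inclusion $\cone(\sm_4(C)) \subseteq \cone(\sm_4(A)) \cap \cone(\sm_4(B))$ is immediate, so the content is the reverse inclusion.  My plan is to construct a single witness vector $y\in\R^4$ satisfying $y^\intercal \sm_4(a)\ge 0$ for every $a\in A$ and $y^\intercal \sm_4(s)\le 0$ for every $s\in B$, with equality at a point if and only if that point lies in $C$.  Given such $y$, if $p = \sum_i \mu_i \sm_4(t_i) = \sum_j \lambda_j \sm_4(s_j)$ is in the intersection of the two cones with $\mu_i,\lambda_j\ge 0$, then the two expressions for $y^\intercal p$ make it both a sum of non-negative terms and a sum of non-positive terms, forcing $y^\intercal p = 0$ and every summand to vanish.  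The strict signs of $y^\intercal \sm_4$ on $A\setminus B$ and $B\setminus A$ then force $\mu_i = 0$ for every $t_i\in A\setminus B$ and $\lambda_j = 0$ for every $s_j\in B\setminus A$, so $p\in\cone(\sm_4(C))$ as required.  This is the Farkas-style reasoning of Lemma~\ref{lem:coneintersect}.

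To construct $y$ I will invoke Theorem~\ref{thm:farkasvector} with $k=2$ and Remark~\ref{rmk:farkasvector}: three distinct, pairwise non-antipodal points $v_1,v_2,v_3\in S^1$ determine $y$ so that $f(t) := y^\intercal \sm_4(t)$ is a raked homogeneous trigonometric polynomial of degree $3$ vanishing precisely on the six points $\{v_l, v_l+\pi : l=1,2,3\}$ and alternating sign on consecutive arcs between these roots.  Every $c\in C$ must be a root of $f$, so I set $v_l=c$ for each $c\in C$, using up $|C|$ of the three $v_l$'s.  This is possible because $|C|\le 2$: the hypothesis $B\nsubseteq[t_1,t_n]_{S^1}$ together with $A\subseteq[t_1,t_n]_{S^1}$ rules out $C = B$, so $|C|<|B|\le 3$, and at least one free $v_l$ remains.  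After possibly replacing $y$ by $-y$, I then choose the remaining $v_l$'s so that every $t_i\in A\setminus B$ lies in an open positive arc of $f$ while every $s_j\in B\setminus A$ lies in an open negative arc.

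The feasibility of this placement splits by the face type $|B|\in\{1,2,3\}$ (vertex, edge, or triangle face of $\cB_4$ via Theorem~\ref{thm:faces}) and by $|C|\in\{0,1,2\}$.  The tightest case is $|B|=3$ with $|C|=2$: the arc-length bound forces $[t_1,t_n]_{S^1}$ to coincide with the $\tfrac{2\pi}{3}$-arc between the two shared triangle vertices, determining $v_1$ and $v_2$; taking $v_3$ in the open arc from the second shared vertex to the antipode of the first makes the interior of $[t_1,t_n]_{S^1}$ a single open positive arc of $f$, while the third triangle vertex lies in the open arc of opposite sign.  The most delicate case is $|B|=3$ with $|C|=1$: the simple root at the shared vertex causes $f$ to change sign across it, so on one side of the shared vertex $f$ is initially negative.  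I will place one free $v_l$ close to the shared vertex on that side, squeezing the resulting open negative region into a gap of the finite set $\{t_1,\dots,t_n\}$ so that no $t_i$ falls in it, then choose the last $v_l$ so that both unshared triangle vertices fall into negative arcs.  Feasibility of these placements reduces to an elementary inequality equivalent to $\diam([t_1,t_n]_{S^1})\le\tfrac{2\pi}{3}$, and the remaining cases with $|B|\le 2$ are easier since more free $v_l$'s are available.  The main obstacle is the case-by-case bookkeeping needed to verify that these placements always succeed, particularly in the $|C|=1$ sub-configurations where the forced simple root of $f$ at a point interior to $[t_1,t_n]_{S^1}$ threatens the required non-negativity on $A$.
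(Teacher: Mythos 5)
Your overall strategy is genuinely different from the paper's. You aim to construct a \emph{single} separating functional $y$ in every case, with $y^\intercal\sm_4$ vanishing exactly on $C = A\cap B$, strictly positive on $A\setminus C$, and strictly negative on $B\setminus C$; once such a $y$ exists, your Farkas-style conclusion is clean and correct. The paper takes a different route: it constructs $y$ via Remark~\ref{rmk:farkasvector} \emph{only} in case (i), where $A\cap B=\varnothing$ and all three roots $v_1,v_2,v_3$ are free to be chosen, so the problem has more slack. It then handles $|C|\in\{1,2\}$ by an algebraic reduction: given $\vec{u}$ in the intersection of the cones, it compares the two coefficients on the shared point and subtracts off the smaller contribution, landing in a disjoint configuration to which case (i) and Corollary~\ref{cor:semicirclecone-one-third} apply. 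This side-steps the constrained root-placement entirely, which is exactly the step that makes your version hard.

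That step is also where your proposal is incomplete. When $|C|=1$, say $s_1=t_\alpha$, you must put a root $v_1$ at $t_\alpha$; $f$ then changes sign across $t_\alpha$, forcing you to squeeze another root $v_2$ between $t_\alpha$ and the nearest point you must keep positive on the ``bad'' side. After using $f(t+\pi)=-f(t)$ to rewrite the conditions $f(s_2),f(s_3)<0$ as positivity at $t_\alpha\pm\tfrac{\pi}{3}$, the remaining freedom is a single root $v_3$, and you need to verify that the open window where $v_3$ may be placed (so that one positive arc covers everything to the right of $v_1$ and the antipodal positive arc covers everything to the left of $v_2$) is nonempty. Unwinding this, the feasibility condition is $\max\bigl(t_n,\,t_\alpha+\tfrac{\pi}{3}\bigr)-\min\bigl(t_1,\,t_\alpha-\tfrac{\pi}{3}\bigr)<\pi$, which does follow from $t_n-t_1\le\tfrac{2\pi}{3}$ together with the strictness coming from $|C|=1$ (equality would force $t_1=s_3$ or $t_n=s_2$, adding a shared point). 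So your claim that ``feasibility reduces to an elementary inequality equivalent to $\diam\le\tfrac{2\pi}{3}$'' is in fact correct, but you have not carried out this verification, and you acknowledge as much. As it stands the proposal is a sound strategy, not a proof: the $|C|=1$ bookkeeping (for both $|B|=2$ and $|B|=3$, and both for $t_\alpha$ interior and at an endpoint of $[t_1,t_n]_{S^1}$, and also checking the non-antipodality requirements on $v_1,v_2,v_3$ needed to invoke Theorem~\ref{thm:farkasvector}) must actually be written out. By contrast, the paper's algebraic reduction needs no new sign analysis beyond case (i), which is why it is the approach the authors chose.
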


For the above proposition we agree $\cone(\varnothing)=\vec{0}$. 

\begin{proof}
Throughout, for convenience, consider points $\sm_{4}(t)\in\R^{4}$ to be written as column vectors. 
In light of the known facial structure of $\cB_4$ (Theorem~\ref{thm:faces}), it follows that $m\le 3$. 
Hence, there are three cases:
\begin{enumerate}[(i)]
\item The sets $\{s_1,\dots, s_m\}$ and $\{t_1,\dots, t_n\}$ are disjoint.
\item The sets $\{s_1,\dots, s_m\}$ and $\{t_1,\dots, t_n\}$ contain one point of intersection.
In this case, $m\in\{2,3\}$, i.e.\ $\{s_1,\dots, s_m\}$ determines an edge or an equilateral triangle in $\partial\cB_4$.
\item The sets $\{s_1,\dots, s_m\}$ and $\{t_1,\dots, t_n\}$ contain two points of intersection.
In this case, $m=3$, the points $\{s_1, s_2, s_3\}$ determine an equilateral triangle in $\partial\cB_4$, $\{s_1, s_2, s_3\}\cap \{t_1,\dots,t_n\}=\{t_1,t_n\}$, and the length of $(t_1,t_n)_{S^1}$ is $\frac{2\pi}{3}$.
\end{enumerate}

The proof will proceed as follows. 
We will consider first the case that $\{t_1,\dots, t_n\}$ and $\{s_1,\dots, s_m\}$ are disjoint and apply Lemma~\ref{lem:coneintersect} to prove that the resulting cones in $\R^4$ must be disjoint.
Then, we will generalize this argument to allow for intersections and consider the remaining two cases.

Toward that end, suppose $\{s_1,\dots, s_m\}\cap\{t_1,\dots, t_n\}=\varnothing$ and note, by Lemma~\ref{lem:coneintersect}, that it is sufficient to find $y\in\R^{4}$ such that $\left(\sm_{4}(t_i)\right)
^\intercal y\geq 0$ for $1\leq i\leq n$ and $\left(\sm_{4}(s_i)\right)
^\intercal y< 0$ for $1\leq i \leq m$.
To define such a vector $y$, fix points $v_1,v_2,v_3\in S^1$ as follows.
By the assumptions on the configuration of the points $\{s_1,\dots,s_{m}\}$, observe there must exist an arc $\Gamma=(\gamma_1,\gamma_2)_{S^1}$ of length $\pi$ such that 
\begin{itemize}
\item $[t_1,t_n]_{S^1}\subseteq \Gamma$, 
\item $\{s_1,\dots, s_m\}\cap \{\gamma_1,\gamma_2\}=\varnothing$, and 
\item $|\{s_1,\dots, s_m\}\cap \Gamma|=N$ for $N\leq 1$.
\end{itemize}
Indeed, to see that we can arrange $N\le 2$, note that if $m=3$ then $\{s_1, s_2, s_3\}$ are the vertices of an equilateral triangle, and hence not in an arc of length $\pi$.
To see that we can arrange $N\le 1$, note that if $m=2$, then since one of the $s_i$ points is outside $[t_1,t_n]_{S^1}$, we can choose $\Gamma$ so that the same $s_i$ point is also outside $\Gamma$.

\begin{figure}[h!]
\begin{minipage}{.5\textwidth}
\begin{center}
\begin{overpic}[width=0.8\textwidth]{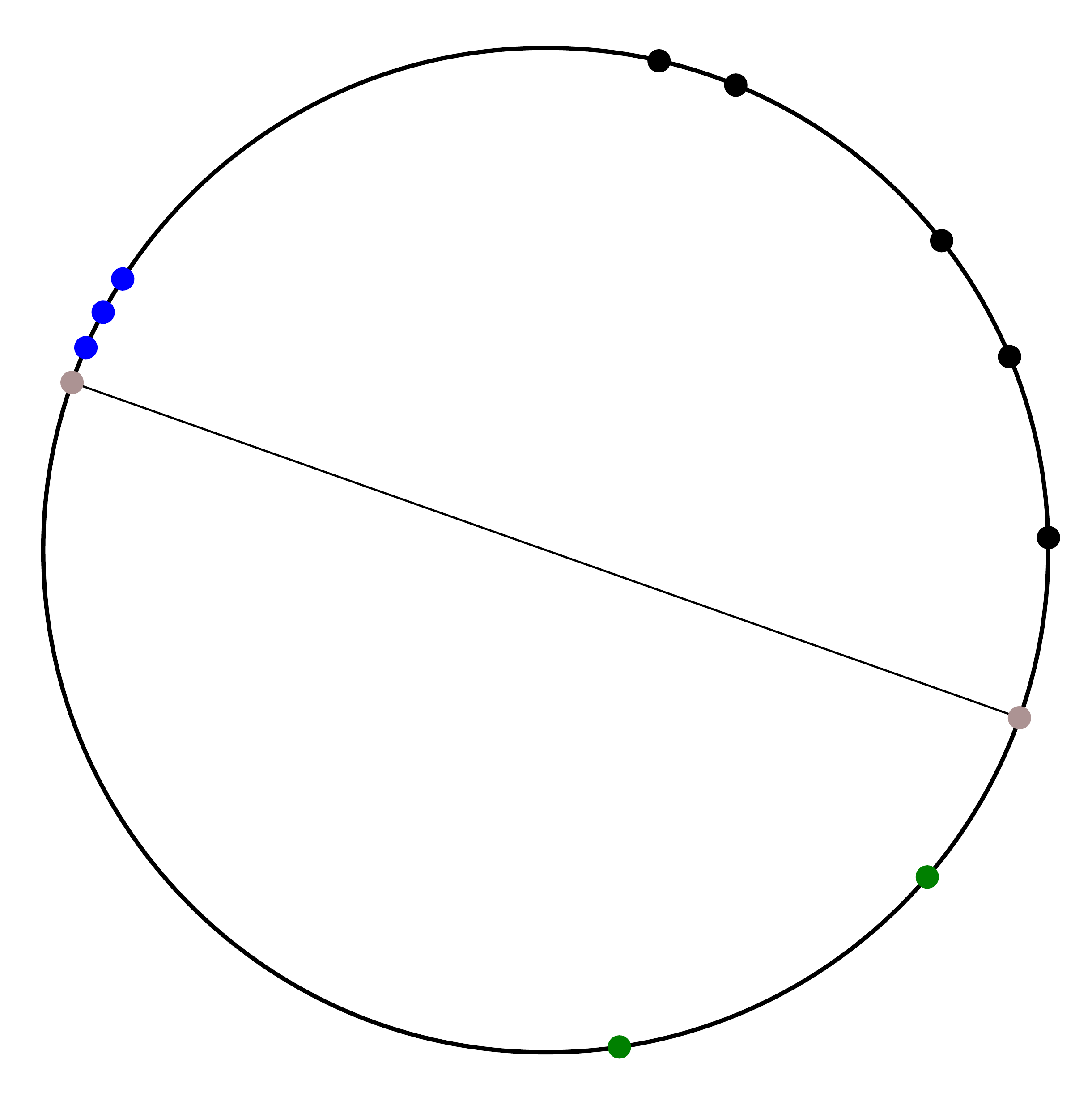}
%,grid,tics=10
\put (97.7,50) {\footnotesize $\displaystyle t_1$}
\put (94,67.5) {\footnotesize $\displaystyle t_2$}
\put (88,78) {\footnotesize $\displaystyle t_3$}
\put (67,94.3) {\footnotesize $\displaystyle t_4$}
\put (60,96.5) {\footnotesize $\displaystyle t_5$}
\put (13.3,72.8) {\footnotesize $\displaystyle v_1$}
\put (11,69.5) {\footnotesize $\displaystyle v_2$}
\put (10,66) {\footnotesize $\displaystyle v_3$}
\put (56,0) {\footnotesize $\displaystyle s_1$}
\put (86,17) {\footnotesize $\displaystyle s_2$}
\put (95,33.3) {\footnotesize $\displaystyle \gamma_1$}
\put (0.5,66) {\footnotesize $\displaystyle \gamma_2$}
\end{overpic}
\end{center}
\caption*{An example of points $\{t_1,\dots, t_{5}\}$ and $\{s_1, s_2\}$ in $S^1$ in the case $N=0$.
}
    \end{minipage}%
    \begin{minipage}{0.5\textwidth}
\begin{center}
\begin{overpic}[width=0.8\textwidth]{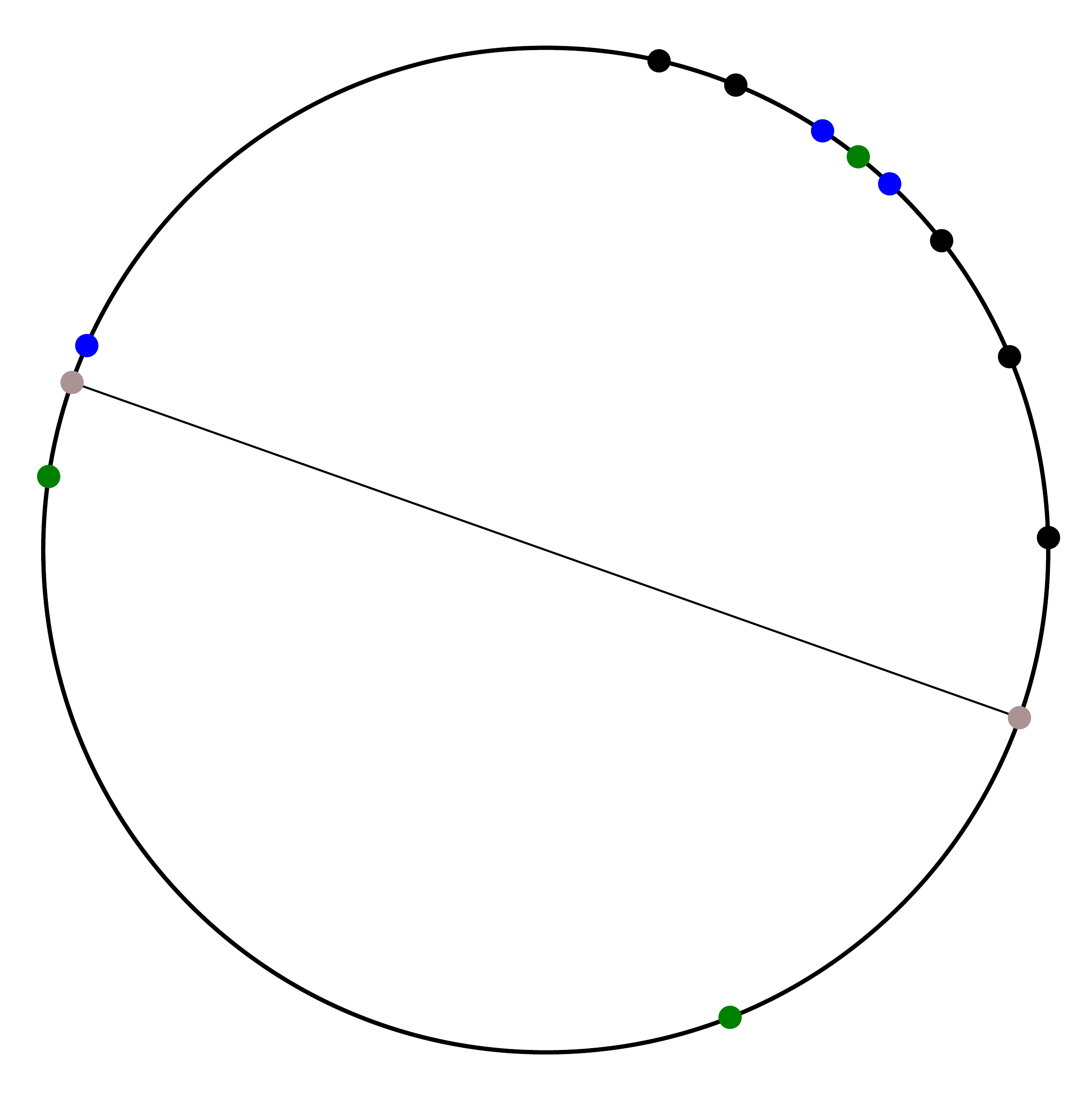}
%,grid,tics=10
\put (97.7,50) {\footnotesize $\displaystyle t_1$}
\put (94,67.5) {\footnotesize $\displaystyle t_2$}
\put (88,78) {\footnotesize $\displaystyle t_3$}
\put (78,79) {\footnotesize $\displaystyle v_1$}
\put (78.6,87.4) {\footnotesize $\displaystyle s_1$}
\put (71.5,84.2) {\footnotesize $\displaystyle v_2$}
\put (67,94.3) {\footnotesize $\displaystyle t_4$}
\put (60,96.5) {\footnotesize $\displaystyle t_5$}
\put (10.2,67) {\footnotesize $\displaystyle v_3$}
\put (-2,56) {\footnotesize $\displaystyle s_2$}
\put (67,3.7) {\footnotesize $\displaystyle s_3$}
\put (95,33.3) {\footnotesize $\displaystyle \gamma_1$}
\put (0.5,66) {\footnotesize $\displaystyle \gamma_2$}
\end{overpic}
\end{center}
\caption*{An example of points $\{t_1,\dots, t_{5}\}$ and $\{s_1, s_2, s_3\}$ in $S^1$ in the case $N=1$.
}
    \end{minipage}
\caption{In both cases, the points $\{\gamma_1,\gamma_2\}$ and $\{v_1,\dots, v_3\}$ are defined in the proof of Proposition~\ref{prop:cones-one-third} and are used to construct a vector satisfying the hypotheses of Lemma~\ref{lem:coneintersect}.}
    \end{figure}

If $N=0$, define $v_1=\gamma_2-\delta$, with $\delta>0$ small enough such that both $(v_1+\pi,v_1)_{S^1}\cap\{s_1,\dots, s_m\}=\varnothing$ and $(v_1+\pi,v_1)_{S^1}\cap\{t_1,\dots, t_n\}=\{t_1,\dots, t_n\}$.
Then, define $v_2$ and $v_3$ so that $v_1$, $v_2$, $v_3$, and $\gamma_2$ appear in counterclockwise order.

If $N=1$, assume without loss of generality that $\Gamma\cap \{s_1,\dots, s_m\}=\{s_1\}$.
Then, define $v_{1}=s_1-\varepsilon$ and $v_{2}=s_1+\varepsilon$.
Choose $\varepsilon>0$ small enough such that  $(v_{1},v_{2})_{S^1}$ does not contain any point in $\{t_1,t_2,\dots, t_{n},\gamma_1,\gamma_2\}$ and furthermore so that $(v_{1}+\pi, v_{2}+\pi)_{S^1}\cap \{s_1,\dots, s_{m}\}=\varnothing$.
Such points must exist because no two elements of $\{s_1,\dots,s_{m}\}$ are antipodal.
Finally, define $v_3=\gamma_2-\delta$, with $\delta>0$ small enough such that both $(v_3+\pi,v_3)_{S^1}\cap\{s_1,\dots, s_m\}=\{s_1\}$ and $(v_3+\pi,v_3)_{S^1}\cap\{t_1,\dots, t_n\}=\{t_1,\dots, t_n\}$.

Now, apply Remark~\ref{rmk:farkasvector} to obtain $y\in\R^{4}$ such that for $t\notin\{v_1, v_2, v_3\}\cup \{v_1+\pi, v_2+\pi, v_3+\pi\}$, we have
\[
\sign\left(\left(\sm_{4}(t)\right)^\intercal y\right)=\sign\Biggl(\prod_{\substack{1\leq l\leq 3}} \sin(v_l - t) \Biggr)=(-1)^{\rho(t)}, \]
where $\rho(t)=\#\{v_l\mid v_l\in(t+\pi,t)_{S^1}, \,1\le l\le 3 \}$.
When we consider the case $t=t_i$ for $1\le i\le n$, we note by construction that $\rho(t_i)$ is even for each $t_i$, and so $\left(\sm_{4}(t_i)\right)
^\intercal y\geq 0$ for $1\leq i\leq n$.

On the other hand, in the case $N=0$, we note that $\rho(s_i)=3$ and $\sign\left((\sm_{2k}(s_i))^\intercal  y\right)=-1$ for $1\leq i \leq m$. 
Finally, in the case $N=1$, note that $\rho(s_1)=1$ and $\sign\left((\sm_{2k}(s_1))^\intercal  y\right)=-1$. Further, the pair $\{v_{1}, v_{2}\}$ has zero net effect on the parity of $\rho(s_i)$ for $2\leq i \leq m$ by the fact that $(v_{1}+\pi, v_{2}+\pi)_{S^1}\cap \{s_1,\dots, s_{m}\}=\varnothing$.
Hence, $\sign\left((\sm_{2k}(s_i))^\intercal y\right)=-1$ for $2\leq i \leq m$.

This concludes the proof of case (i) that 
$\cone\left(\sm_4(\{s_1,\dots,s_m\})\right)\cap \cone\left(\sm_4(\{t_1,\dots,t_n\})\right)=\{\vec{0}\}$
when $\{s_1,\dots, s_m\}\cap \{t_1,\dots, t_n\}=\varnothing.$

Next, consider case (ii).
Assume without loss of generality that $\{s_1,\dots, s_m\}\cap\{t_1,\dots, t_n\}=\{s_1\},$
and write $s_1=t_\alpha$ for some $1\leq \alpha \leq n$.
Given
$\vec{u}\in \cone\left(\sm_{4}(\{t_1,\dots,t_{n}\})\right)\cap \cone\left(\sm_{4}(\{s_1,\dots,s_{m}\})\right)$,
write $\vec{u}=\sum_{i=1}^n \lambda_i \sm_{4}(t_i)=\sum_{j=1}^m \kappa_j \sm_{4}(s_j)$ for some non-negative scalars $\lambda_i,\kappa_j$. 
To show $\vec{u}\in \cone\left(\sm_4(t_\alpha)\right)$, observe that it is sufficient to prove $\lambda_i=0$ for all $i\in\{1,\dots, n\}\setminus\alpha$.
We consider the possibilities $\lambda_\alpha\geq \kappa_1$ and $\lambda_\alpha< \kappa_1$ separately.

If $\lambda_\alpha\geq \kappa_1$, then
\[\vec{u}-\kappa_1\sm_4(s_1)=(\lambda_\alpha-\kappa_1)\sm_4(t_\alpha) +\sum_{i\in\{1,\dots, n\}\setminus \alpha}\lambda_i \sm_{4}(t_i)=\sum_{j=2}^m \kappa_j \sm_{4}(s_j).\]
It follows that $\vec{u}-\kappa_1\sm_4(s_1)\in \cone\left(\sm_{4}(\{t_1,\dots,t_{n}\})\right)\cap \cone\left(\sm_{4}(\{s_2,,\ldots,s_{m}\})\right)$.
Hence, because $\{t_1,\dots,t_n\}\cap\{s_2,\dots, s_m\}=\varnothing$, we have obtained a configuration of points satisfying the hypotheses of case (i) of this proof. 
Therefore, $\vec{u}-\kappa_1\sm_4(s_1)=\vec{0},$ and by Corollary~\ref{cor:semicirclecone-one-third} of case (i) below, it follows that $\lambda_\alpha=\kappa_1$ and $\lambda_i=0$ for all $i\in\{1,\dots, n\}\setminus\alpha$. 

If $\lambda_\alpha<\kappa_1$, then $\vec{u}-\lambda_\alpha\sm_4(t_\alpha)=\sum_{i\in\{1,\dots, n\}\setminus \alpha} \lambda_i \sm_{4}(t_i)=\sum_{j=2}^m \kappa_j \sm_{4}(s_j)-\lambda_\alpha\sm_4(t_\alpha)$.
That is,
\[\vec{u}-\lambda_\alpha\sm_4(t_\alpha)=(\kappa_1-\lambda_\alpha)\sm_4(s_1) +\sum_{j=2}^m \kappa_j \sm_{4}(s_j).\]
As before, because $\left(\{t_1,\dots,t_n\}\setminus\{t_\alpha\}\right)\cap\{s_1,\dots, s_m\}=\varnothing,$ we have obtained a configuration of points satisfying the hypotheses of case (i) of this proof. 
Hence $\vec{u}-\lambda_\alpha\sm_4(t_\alpha)=\vec{0}$, and by  Corollary~\ref{cor:semicirclecone-one-third} of case (i), it follows that $\lambda_i=0$ for all $i\in\{1,\dots, n\}\setminus\alpha$.
This concludes the proof for case (ii). 

Last, observe that case (iii) follows by a similar trick: by rewriting a vector $\vec{u}$ contained in the intersection of both cones, we may obtain a configuration of points satisfying the hypotheses of case (i) or case (ii). 
\end{proof}

We emphasize that the following is a corollary of case (i) only in the proof of Proposition~\ref{prop:cones-one-third}; indeed it is used in the proof of case (ii).

\begin{corollary}\label{cor:semicirclecone-one-third} 
Let distinct $t_1,\dots, t_{n}\in S^1$ be in counterclockwise order and contained in an arc $[t_1,t_n]_{S^1}$ of length at most $\frac{2\pi}{3}$.
If $\sum_{i=1}^n \lambda_i \sm_{4}(t_i)=\vec{0}$ with $\lambda_i\ge 0$, then $\lambda_i=0$ for all $1\leq i \leq n$.
\end{corollary}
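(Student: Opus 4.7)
The plan is to construct a separating hyperplane for $\sm_4(\{t_1,\ldots,t_n\})$: namely, a vector $y\in\R^4$ such that $y^\top\sm_4(t_i)>0$ for every $i$. Once such a $y$ is in hand, taking the inner product of both sides of $\sum_{i=1}^n \lambda_i \sm_4(t_i)=\vec{0}$ with $y$ yields
\[0 \;=\; \sum_{i=1}^n \lambda_i\bigl(y^\top\sm_4(t_i)\bigr),\]
a sum of non-negative terms with strictly positive coefficients $y^\top\sm_4(t_i)$, forcing every $\lambda_i$ to vanish. The case $n=1$ is immediate since $\sm_4(t_1)\ne\vec{0}$, so I will assume $n\ge 2$.

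To produce $y$, I would directly imitate the Farkas-style construction used in case~(i) of the proof of Proposition~\ref{prop:cones-one-third}. Because $[t_1,t_n]_{S^1}$ has length at most $\frac{2\pi}{3}$, the complementary open arc $(t_n,t_1+\pi)_{S^1}$ has length at least $\pi-\frac{2\pi}{3}=\frac{\pi}{3}$, so I can choose three distinct points $v_1,v_2,v_3$ inside it. As observed in the proof of Theorem~\ref{thm:farkasvector}, the product
\[f(t)=\prod_{l=1}^3 \sin(v_l-t)\]
is a raked homogeneous trigonometric polynomial of degree $3$, and hence is of the form $f(t)=y^\top\sm_4(t)$ for a unique $y\in\R^4$.

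It remains to verify that $f(t_i)>0$ for each $i$, which is a direct angle computation: for any $t\in[t_1,t_n]_{S^1}$ and any $v_l\in(t_n,t_1+\pi)_{S^1}$, the signed difference $v_l-t$ lies in the open interval $(0,\pi)$, so each factor $\sin(v_l-t)$ is strictly positive. The lower bound uses that $v_l$ lies past $t_n$ in the counterclockwise direction from every $t\in[t_1,t_n]_{S^1}$, and the upper bound uses the arc-length hypothesis $|[t_1,t_n]_{S^1}|\le\frac{2\pi}{3}<\pi$ together with $v_l<t_1+\pi$. I do not anticipate a real obstacle here; the arc-length bound $\frac{2\pi}{3}$ is chosen precisely to leave enough room in the complementary arc for the three prescribed zeros $v_l$, matching the degrees of freedom of a degree-$3$ raked homogeneous polynomial.
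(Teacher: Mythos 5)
Your proof is correct, and it takes a genuinely different route from the paper's. The paper proves this corollary \emph{from} case~(i) of Proposition~\ref{prop:cones-one-third}: for $n\ge 2$ it rewrites $\sum_{i<n}\lambda_i\sm_4(t_i)=\lambda_n\sm_4(-t_n)$ using oddness, notes $-t_n\notin[t_1,t_n]_{S^1}$, invokes case~(i) to conclude that the cone intersection is trivial so $\lambda_n=0$, and then iterates down to $n=1$. You instead exhibit a single vector $y\in\R^4$ with $y^\top\sm_4(t_i)>0$ for all $i$ by putting the three zeros $v_1,v_2,v_3$ of the degree-$3$ raked polynomial $f(t)=\prod_l\sin(v_l-t)$ into the arc $(t_n,t_1+\pi)_{S^1}$, which is nonempty precisely because $\diam([t_1,t_n]_{S^1})\le\frac{2\pi}{3}<\pi$; the sign check $\sin(v_l-t_i)>0$ is an elementary interval computation. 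What this buys you: your argument is a one-shot strict separation (no iteration, no reduction to the disjoint case), and it does not depend on Proposition~\ref{prop:cones-one-third} at all, which would simplify the logical dependency structure --- in the paper, this corollary must be proved from case~(i) alone because it is consumed inside case~(ii) of that same proposition, whereas your self-contained version could be stated and proved before the proposition. The trade-off is that the paper's version is a two-line deduction from already-established machinery, while yours re-enters the Farkas-type construction of Theorem~\ref{thm:farkasvector} directly; but since you get the \emph{strict} inequalities $y^\top\sm_4(t_i)>0$ (rather than the mixed $\ge 0$/$<0$ of case~(i)), the argument is if anything cleaner for the conclusion you need.
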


\begin{proof}
The claim is obvious in the case $n=1$.
Otherwise, because $\sm_{4}(-t)=-\sm_{4}(t),$ we may write $\sum_{i=1}^{n-1}\lambda_i \sm_{4}(t_i) = \lambda_n \sm_{4}(-t_n)$, with $-t_n\notin [t_1,t_n]_{S^1}$.
With $s_1=-t_n,$ observe that the hypotheses of case (i) of Proposition~\ref{prop:cones-one-third} are satisfied, implying \[\cone\left(\sm_{4}(\{t_1,\ldots,t_{n-1}\})\right)\cap \cone\left(\sm_{4}(-t_n)\right)=\vec{0}
\]
Since $\lambda_n \sm_{4}(-t_n)$ is in this intersection of cones, this implies $\lambda_n=0$.
Hence $\sum_{i=1}^{n-1}\lambda_i \sm_{4}(t_i)=\vec{0}$, and we may proceed iteratively to conclude $\lambda_i=0$ for all $i$.
\end{proof}

We are now ready to prove that the ``diameter non-increasing" result in Conjecture~\ref{conj:diam} is true for $r=\frac{2\pi}{3}$.

\begin{proposition}\label{prop:diam}
For $\mu\in\vrm{S^1}{\frac{2\pi}{3}}$, we have $\diam(\supp(\mu))=\diam(\supp(\mu)\cup\supp(\iota \circ p\circ \sm_4(\mu)))$.
\end{proposition}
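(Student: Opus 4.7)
The plan is to follow the measure $\mu$ through the composition $\iota\circ p\circ \sm_4$ and then invoke Proposition~\ref{prop:cones-one-third} to confine the resulting support inside an arc that already witnesses the diameter of $\supp(\mu)$. First I will write $\mu = \sum_{i=1}^n \lambda_i \delta_{t_i}$ with all $\lambda_i>0$ and with $t_1,\ldots,t_n$ listed counterclockwise inside a minimal arc $[t_1,t_n]_{S^1}$ of length $\diam(\supp(\mu))\le \frac{2\pi}{3}<\pi$. By Theorem~\ref{thm:miss-origin}, $\sm_4(\mu)\ne\vec{0}$, so $q:=p(\sm_4(\mu))$ is a well-defined point of $\partial\cB_4$ satisfying $\sm_4(\mu)=c\cdot q$ for some $c>0$. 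Let $F$ be the unique minimal face of $\cB_4$ containing $q$; by Theorem~\ref{thm:faces}, $F$ is a simplex with vertex set $\sm_4(\{s_1,\ldots,s_m\})$, where $\{s_1,\ldots,s_m\}\subseteq S^1$ is either a single point, the endpoints of an arc of length at most $\tfrac{2\pi}{3}$, or three equally spaced points at pairwise distance $\tfrac{2\pi}{3}$. Since $q$ lies in the relative interior of $F$, there are unique positive weights $\kappa_j$ summing to $1$ with $q=\sum_j \kappa_j\sm_4(s_j)$, so $\iota(q)=\sum_j \kappa_j \delta_{s_j}$ has support exactly $\{s_1,\ldots,s_m\}$.

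The entire proposition reduces to proving the containment $\{s_1,\ldots,s_m\}\subseteq [t_1,t_n]_{S^1}$, because then $\supp(\mu)\cup\supp(\iota(q))$ lies in an arc of length $\diam(\supp(\mu))$, which forces equality of diameters. I would argue this containment by contradiction. Assuming some $s_j$ lies outside $[t_1,t_n]_{S^1}$, the points $t_1,\ldots,t_n$ together with $s_1,\ldots,s_m$ satisfy the hypotheses of Proposition~\ref{prop:cones-one-third}, giving
\[
\cone(\sm_4(\{s_1,\ldots,s_m\}))\cap\cone(\sm_4(\{t_1,\ldots,t_n\}))=\cone(\sm_4(\{s_1,\ldots,s_m\}\cap\{t_1,\ldots,t_n\})).
\]
The nonzero vector $\sm_4(\mu)=\sum_j(c\kappa_j)\sm_4(s_j)=\sum_i\lambda_i\sm_4(t_i)$ lies in both cones on the left, so it must admit a conic representation supported only on those $\sm_4(s_j)$ with $s_j\in\{t_1,\ldots,t_n\}$ (and if that intersection is empty the right-hand cone is $\{\vec{0}\}$, immediately contradicting $\sm_4(\mu)\ne\vec{0}$).

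The main step is to upgrade this into a contradiction via uniqueness of conic representation. For this I will use the elementary observation that a simplex whose affine hull avoids the origin has linearly independent vertices: affine independence kills any relation $\sum a_i v_i=\vec{0}$ with $\sum a_i = 0$, while any relation with $\sum a_i\ne 0$ would place $\vec{0}\in\mathrm{aff}(\{v_i\})$. The hypothesis is met for $F$ because $\vec{0}\in\mathrm{int}(\cB_4)$ and any supporting hyperplane of $\cB_4$ along $F\subset\partial\cB_4$ avoids $\vec{0}$, so $\vec{0}\notin\mathrm{aff}(F)$. Linear independence of $\sm_4(s_1),\ldots,\sm_4(s_m)$ then makes the conic representation of $\sm_4(\mu)$ in these vectors unique. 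Matching $\sm_4(\mu)=\sum_j(c\kappa_j)\sm_4(s_j)$, every coefficient of which is strictly positive, against the representation supported only on $\{s_j\}\cap\{t_i\}$ forces $\{s_1,\ldots,s_m\}\subseteq\{t_1,\ldots,t_n\}\subseteq [t_1,t_n]_{S^1}$, contradicting the choice of $s_j$ outside the arc. The step I expect to require the most care is verifying the hypotheses of Proposition~\ref{prop:cones-one-third} uniformly across the three face types of $\cB_4$ (vertex, edge, equilateral triangle) and bookkeeping the degenerate low-$m$ cases where linear independence is essentially trivial but the application of the uniqueness argument still has to be stated cleanly.
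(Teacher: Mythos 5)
There is a genuine gap: your opening normalization assumes every finite $X\subseteq S^1$ of diameter at most $\frac{2\pi}{3}$ lies in a closed arc of length $\diam(X)$. That fails precisely at the boundary case: three equally spaced points $\{t,\,t+\frac{2\pi}{3},\,t+\frac{4\pi}{3}\}$ have diameter $\frac{2\pi}{3}$, yet the smallest arc containing them has length $\frac{4\pi}{3}$. For such a $\mu$, your minimal arc $[t_1,t_n]_{S^1}$ does not have length $\le \frac{2\pi}{3}$, so the hypotheses of Proposition~\ref{prop:cones-one-third} are not met and the entire cone argument cannot be run. This is not a cosmetic gap, since measures supported on an equilateral triangle are exactly the elements of $\vrm{S^1}{\frac{2\pi}{3}}$ that are \emph{not} in $\vrm{S^1}{r}$ for any $r<\frac{2\pi}{3}$; they are the new content at $r=\frac{2\pi}{3}$.

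The paper's proof handles this with a two-case split. If $\supp(\mu)$ does fit in an arc of length at most $\frac{2\pi}{3}$, it invokes Proposition~\ref{prop:cones-one-third} exactly as you do (your linear-independence / affine-hull elaboration of why the cone identity forces $\{s_1,\dots,s_m\}\subseteq\{t_1,\dots,t_n\}$ is correct and a helpful unpacking of a step the paper leaves terse). Otherwise, a diameter-$\frac{2\pi}{3}$ set on $S^1$ not contained in an arc of that length must be exactly three equally spaced points, so $n=3$ and $\{t_1,t_2,t_3\}$ is the vertex set of a $2$-face of $\cB_4$ by Theorem~\ref{thm:faces}; then $\sm_4(\mu)$ already lies on $\partial\cB_4$, so $p$ acts as the identity and $\iota\circ p\circ\sm_4(\mu)=\mu$, making the diameter identity trivial. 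To repair your proof you would need to add this second case (or equivalently prove the triangle sub-case of the containment separately, since Proposition~\ref{prop:cones-one-third} is unavailable there).
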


\begin{proof}
Let $\mu=\sum_{i=1}^n \lambda_i \delta_{t_i} \in\vrm{S^1}{\frac{2\pi}{3}}$ for $t_i\in S^1$ and $\lambda_i>0$ with $\sum_i\lambda_i=1$. 
There are two cases. 
If $\{t_1,\dots,t_n\}$ are in counterclockwise order and belong to an arc of length at most $\frac{2\pi}{3}$, then  Proposition~\ref{prop:cones-one-third} implies that $\supp(\iota \circ p\circ \sm_4(\mu))\subseteq [t_1,t_n]_{S^1}$, and hence
\[\diam(\supp(\mu))=\diam(\supp(\mu)\cup\supp(\iota \circ p\circ \sm_4(\mu))).\]
Otherwise, $n=3$ and $\{t_1,t_2,t_3\}$ form the vertices of an equilateral triangle. 
In this case, we have $\iota \circ p\circ \sm_4(\mu)=\mu$ in light of Theorem~\ref{thm:faces}.
\end{proof}

\section{Conclusion}

We provide a lower bound on the diameter of a \emph{Carath\'{e}odory} set in the centrally symmetric trigonometric moment curve, i.e., a set whose convex hull contains the origin.
As applications, we obtain sharp versions of the Borsuk--Ulam theorem for maps into higher-dimensional codomains, and we gain control over the zeros of raked trigonometric polynomials.
Furthermore, we provide a geometric proof (taking advantage of continuous maps afforded by the optimal transport metric) that the Vietoris--Rips metric thickening of the circle achieves the homotopy type of the 3-sphere $S^3$ at scale parameter $r=\frac{2\pi}{3}$, in contrast to the uncountably infinite wedge-sum of 2-spheres attained by the ordinary Vietoris--Rips complex on the circle.
This proof reveals connections between Vietoris--Rips thickenings of the circle and the Barvinok--Novik orbitopes $\cB_{2k}$; analogous connections exist between \v{C}ech thickenings of the circle and the Carath\'{e}odory orbitopes.

The homotopy types of Vietoris--Rips metric thickenings of the circle $\vrm{S^1}{r}$ are currently unknown for $r>\frac{2\pi}{3}$.
To obtain Conjecture~\ref{conj:homotopy-S1}, that $\vrm{S^1}{r}\simeq\partial\cB_{2k}\cong S^{2k-1}$ for $\frac{2\pi(k-1)}{2k-1}\leq r<\frac{2\pi k}{2k+1}$, it remains to prove the homotopy equivalence $\iota\circ (p\circ \sm_{2k})\simeq \mathrm{id}_{\vrm{S^1}{r}}$, where a linear homotopy may again be well-defined (see Conjecture~\ref{conj:diam}).

\section{Acknowledgements}

We would like to thank Harrison Chapman for the insights behind the proof of Lemma~\ref{lem:det}, Micha{\l} Adamaszek, Alexander Barvinok, Yuliy Baryshnikov, Isabella Novik, Raman Sanyal, Rainer Sinn, and Cynthia Vinzant for helpful conversations, and Arkadiy Skopenkov for pointing us to~\cite{malyutin2018neighboring}.

\bibliographystyle{plain}
\bibliography{MetricThickeningsBorsukUlamTheoremsAndOrbitopes}

\appendix

\end{document}